\DeclareSymbolFont{rsfscript}{OMS}{rsfs}{m}{n}
\DeclareSymbolFontAlphabet{\mathrsfs}{rsfscript}
\def\softd{{\leavevmode\setbox1=\hbox{d}\hbox
            to 1.15\wd1{d\kern-0.2ex{\char039}\hss}}}    
\def\softl{l\kern-0.3ex\raise0.1ex\hbox{'}\kern-0.3ex}   
\numberwithin{equation}{section}
\newtheorem{Thm}{Theorem}[section]
\newtheorem{Prop}[Thm]{Proposition}
\newtheorem{Lemma}[Thm]{Lemma}
\newtheorem{Cor}[Thm]{Corollary}
\theoremstyle{remark}
\def\rrbb{{\!-\!\!\!-\!\!\!-\!}}
\def\Dc{\mathrel{\mathrsfs{D}}}
\def\Lc{\mathrel{\mathrsfs{L}}}
\def\Rc{\mathrel{\mathrsfs{R}}}
\def\A{\mathfrak{A}}
\def\C{\mathfrak{C}}
\def\B{\mathfrak{B}}
\def\T{\mathfrak{T}}
\def\S{\mathfrak{S}}
\def\R{\mathfrak{R}}
\def\L{\mathfrak{L}}
\def\si{\sigma}
\def\wt{\widetilde}
\def\Ga{\Gamma}
\def\De{\Delta}
\def\Te{\Theta}
\def\co{\mathrm{c}}
\def\r{\mathrm{r}}
\def\l{\mathrm{l}}
\def\la{\lambda}
\def\al{\alpha}
\def\be{\beta}
\def\ga{\gamma}
\def\de{\delta}
\def\lv{\mathfrak{l}}
\def\rv{\mathfrak{r}}
\def\sv{\mathfrak{s}}
\def\cb{\mathbf{c}}
\def\sk{\mathsf{sk}}
\def\Ls{\mathsf{L}}
\def\Rs{\mathsf{R}}
\def\ol{\overline}
\def\wt{\widetilde}
\def\wh{\widehat}
\newcommand{\pseudo}{pseudosemilattice}
\def\circledm{\protect\mathbin{\hbox
    {\protect$\bigcirc$\rlap{\kern-8.2pt\raise0pt\hbox
    {\protect$\mathtt{m}$}}}}}        
\def\smallcircledm{\protect\mathbin{\hbox
    {\protect$\bigcirc$\rlap{\kern-6.9pt\raise0pt\hbox
    {\protect$\mathtt{m}$}}}}}
\def\wr{\mathrel{{\le}_{\Rc}}}
\def\wl{\mathrel{{\le}_{\Lc}}}
\title[]{On the variety of strict pseudosemilattices}
\author{K.~Auinger}
\address{Fakult\"at f\"ur Mathematik, Universit\"at Wien,
Nordbergstrasse 15, A-1090 Wien, Austria}
\email{karl.auinger@univie.ac.at}
\author{L.~Oliveira}
\address{Departamento de Matem\'atica Pura,
Faculdade de Ci\^encias da Universidade do Porto,
R. Campo Alegre, 687, 4169-007 Porto, Portugal}
\email{loliveir@fc.up.pt}
\begin{document}

\begin{abstract} A
new model, in terms of finite bipartite graphs, of the free pseudosemilattice is presented. This will then be used to obtain several results about the variety $\mathbf{SPS}$ of all strict pseudosemilattices: (i) an identity basis for $\mathbf{SPS}$ is found, (ii) $\mathbf{SPS}$ is shown to be inherently non-finitely based, (iii) $\mathbf{SPS}$ is shown to have no irredundant identity basis, and (iv) $\mathbf{SPS}$ is shown to have no covers and to be $\cap$-prime in the lattice of all varieties of pseudosemilattices.
Some applications to e-varieties of locally inverse semigroups are also derived.
\end{abstract}

\subjclass[2010]{08B05, 08B20, 20M17, 06F99, 05C25}
\maketitle
\section{Introduction} A semigroup $S$ is regular if for every element $x\in S$ there is an element $x'\in S$ such that $xx'x=x$. On the set $E(S)$ of idempotents of a regular semigroup $S$ we shall consider the following two binary relations:
\[e\,\wr f \,\Leftrightarrow\, e=fe\quad\mbox{ and }\quad e\,\wl f \,\Leftrightarrow\, e=ef.\]
Let also ${\le} ={\wr}\cap{\wl}$. Then $\wr$ and $\wl$ are quasi-orders on $E(S)$, while $\le$ is \emph{the natural partial order} on $E(S)$. We shall write also $f\mathrel{{\ge}_{\Rc}}e$, $f\mathrel{{\ge}_{\Lc}}e$
and $f\ge e$ for $e\,\wr f$, $e\,\wl f$ and $e\le f$, respectively, and denote by $(f]_{\Rc}$ the set of idempotents $e$ such that $e\,\wr f$. Similarly, we define $(f]_{\Lc}$ and $(f]_{\le}$.

\emph{Locally inverse} semigroups can be defined as regular semigroups for which any two idempotents $e$ and $f$ have another idempotent $g$ such that $(e]_{\Rc}\cap (f]_{\Lc}=(g]_{\le}$. This idempotent $g$ is unique for each ordered pair of idempotents $(e,f)$, and shall be denoted by $e\wedge f$. Thus, any locally inverse semigroup $S$ originates a new binary algebra $(E(S),\wedge)$ called the \emph{ \pseudo\ of idempotents} of $S$. The class of all such binary algebras forms a variety $\mathbf{PS}$ which can be defined by the following three identities together with the left-right duals (PS2') and (PS3') of (PS2) and (PS3) (Nambooripad \cite{na1}):
\begin{itemize}
\item[(PS1)] $x\wedge x\approx x$;
\item[(PS2)] $(x\wedge y)\wedge (x\wedge z)\approx
(x\wedge y)\wedge z$;
\item[(PS3)] $((x\wedge y)\wedge (x\wedge z))\wedge (x\wedge w)\approx (x\wedge y)
\wedge ((x\wedge z)\wedge (x\wedge w))$.
\end{itemize}
Abstractly, a \pseudo\ is a binary algebra $(E,\wedge)$ satisfying these five identities, and, as such, every \pseudo\ is the \pseudo\ of idempotents of some locally inverse semigroup. The relations ${\le}_{\Rc}$ and ${\le}_{\Lc}$ can be recovered from the operation $\wedge$, namely by
\[e\wr f\Leftrightarrow f\wedge e=e\mbox{ and } e\wl f\Leftrightarrow e\wedge f=e.\]
{\parindent=0pt In particular, the relations $e\mathrel{{\ge}_{\Rc}}e\wedge f\wl f$
are satisfied for all $e,f\in E$.}

An e-variety of regular semigroups is a class of such semigroups
closed under the formation of homomorphic images, regular subsemigroups and direct products (see \cite{ha1, ks1}). The class {\bf LI} of all locally inverse semigroups is an example of an e-variety of regular semigroups. The first author \cite{au4} showed that the mapping
\begin{equation}\label{sgtops}\varphi: {\mathcal{L}}_e(\mathbf{ LI})\longrightarrow {\mathcal{L}}(\mathbf{PS}),\quad \mathbf{V}\longmapsto \{(E(S),\wedge)\,|\, S\in \mathbf{V}\}\end{equation}
 is a well-defined complete
homomorphism from the lattice ${\mathcal{L}}_e({\bf LI})$ of e-varie\-ties of locally
inverse semigroups onto the lattice ${\mathcal{L}}(\mathbf{PS})$ of varieties of \pseudo s.

It is well known that the pseudosemilattice $(E(S),\wedge)$ of a locally inverse semigroup $S$ is associative if and only if $S$ is $E$-solid \cite[Theorem 4.1]{biequationaltheory}; the identities (PS2) and (PS2') imply that an associative pseudosemilattice is a normal band. It follows that the homomorphism  $\varphi$ of (\ref{sgtops}) maps the interval $[\mathbf{T},\mathbf{ESLI}]$ of $\mathcal{L}_e(\mathbf{ LI})$, consisting of all $E$-solid locally inverse e-varieties, onto the eight-element lattice $[\mathbf{T},\mathbf{NB}]$ consisting of all varieties of normal bands ($\mathbf{T}$ denotes here the trivial e-variety as well as the trivial variety). By Hall \cite[Theorem 3.5]{hallCSR} there exists a unique least locally inverse e-variety which is not $E$-solid: this is the e-variety $\mathbf{CSR}$ of all \emph{combinatorial strict regular semigroups} which is generated by the five-element combinatorial non-orthodox completely $0$-simple semigroup $A_2$. It follows that $\mathbf{SPS}:=\mathbf{CSR}\varphi$ is the unique least non-associative variety of pseudosemilattices the members of which are called \emph{strict pseudosemilattices}. In particular, $\mathbf{SPS}$ is generated by each of its non-associative members, an example of which is $E_2:=E(A_2)$, the pseudosemilattice of idempotents of $A_2$. It is easily seen that $\mathbf{SPS}$ (since it contains $E_2$) contains all left zero semigroups, all right zero semigroup and all semilattices and therefore contains all normal bands. In particular,  ${\mathcal{L}}(\mathbf{PS})$ is the disjoint union of the intervals $[\mathbf{T},\mathbf{NB}]$ and $[\mathbf{SPS},\mathbf{PS}]$ and $\mathbf{NB}\subseteq \mathbf{SPS}$.

The present paper has two main objectives. The second one is to construct an (infinite) identity basis for $\mathbf{SPS}$ and to prove some remarkable properties of this variety: it does not have an irredundant identity basis, it is inherently non-finitely based and has no cover in the lattice of varieties of pseudosemilattices. Some applications to e-varieties of locally inverse semigroups are also given. This will be dealt with in sections \ref{basisSPS} -- \ref{supplements}.

The aforementioned results will be obtained with the help of a new model of the free pseudosemilattice which is in terms of finite bipartite graphs and which seems to be much more transparent than the previously discovered ones. In the literature, there exist already three different models of the free pseudosemilattice. The first one is by Meakin \cite{mea2} (the two generator case is treated in \cite{mp2}), the second one by the first author \cite{aufreepseudo} and the third one by the second author \cite{l1}. There are two further models by the second author \cite{l2} which may be seen as offshoots of \cite{l1}. All these models are quite complicated and involved, the complete statement of their definitions needs quite a bit of space (therefore the authors refrain from recalling these here). Moreover,  it is by far not obvious that these models are actually models of the same structure. In fact, a direct proof showing that these structures are isomorphic is not known and seems to be tedious. For the authors it turned out to be shorter and more effective to give a direct proof that the new construction is indeed a model of the free pseudosemilattice. This has the additional advantage that the proof  does not depend on any previous result, hence is self-contained and  its understanding does not require any semigroup theoretic background. The presentation of this proof is the first objective of the paper which will be accomplished in section \ref{freePS}; some preliminaries will be collected in section \ref{algebraB(X)}.

\section{The binary algebra $\B(X)$}\label{algebraB(X)} The members of $F_2(X)$, the free binary algebra  on a non-empty set $X$ of variables (or letters) are usually written as well-formed words over the alphabet $X\cup\{(,),\wedge\}$ where $\wedge$ is a symbol for the binary operation. However, they may also
be conveniently represented by finite rooted binary trees the leaves of which are labeled by the letters of $X$, see \cite{l1}. We define this representation inductively by setting $\Ga(x)=\underset{x}{\bullet}$ for each $x\in X$ and letting, for $u,v\in F_2(X)$
 $$\tikz[scale=1]{\draw(0,0) node{$\Ga(u\wedge v):=$};\draw(1.5,-0.5)node{$\Ga(u)$};\draw(2.5,-0.5)node{$\Ga(v)$};\draw(2,0.5)node{$\bullet$};
 \draw (1.5,-0.3)--(2,0.5);\draw (2,0.5)--(2.5,-0.3);\filldraw (2.9,-0.05)circle (0.5pt);}
$$
The set $\Ga(F_2(X))=:\T(X)$ obtained this way comprises the set of all finite binary rooted trees in which each vertex except the root has a unique predecessor, each vertex not being a leaf has two successors, a \emph{left} one and a \emph{right} one (these vertices will be referred to as the \emph{left vertices} and the \emph{right vertices}, respectively), and each leaf carries a label from $X$. The vertices of $\Ga(u)$ represent uniquely determined subwords of $u$. One has an obvious binary operation $\wedge$ on $\T(X)$, namely $(\Ga(u),\Ga(v))\mapsto \Ga(u)\wedge \Ga(v):=\Ga(u\wedge v)$ so that the mapping $F_2(X)\to \T(X)$, $u\mapsto \Ga(u)$ is an isomorphism.

We define some combinatorial invariants of the members of $F_2(X)$. The \emph{content} $\co(u)$ is the set of all variables (letters) occurring in $u$; $\l(u)$ and $\r(u)$ are, respectively, the \emph{leftmost} and the \emph{rightmost} letter occurring in $u$. We  need two further, less common invariants: the \emph{left content} $\co_l(u)$ is the set of all letters that label a left leaf of $\Ga(u)$ while the \emph{right content} $\co_r(u)$ is the set of all letters that label a right leaf of $\Ga(u)$. In other words, $\co_l(x)=\co_r(x)=\emptyset$ for each letter $x$, and a letter $x$ belongs to the left content (respectively right content) of $u\in F_2(X)\setminus X$ if and only if $x\wedge t$ (respectively $t\wedge x$) occurs as a subword of $u$ for some $t\in F_2(X)$.

\subsection{The binary algebra $\B(X)$} We are going to introduce another $X$-generated binary algebra $\B(X)$ as follows. Let first $\B'(X)$ be the set of all finite non-trivial trees $\ga$ all of whose vertices are labeled by letters of $X$ and such that an ordered pair $(\lv_\ga,\rv_\ga)$ of two distinct adjacent vertices is distinguished, the \emph{left root} and the \emph{right root}, respectively, of $\ga$. The set of all vertices of $\ga$ then is naturally partitioned into two disjoint subsets $L_\ga$ and $R_\ga$ (the \emph{left vertices} and the \emph{right vertices}, respectively) by letting $L_\ga$ be the set of all vertices having even distance to $\lv_\ga$ and $R_\ga$ the set of all vertices having odd distance to $\lv_\ga$ (the definition is dual with respect to $\rv_\ga$). So, the members of $\B'(X)$ are always considered and viewed as bipartite graphs. For a left vertex $a$ and a right vertex $b$ which are adjacent the unique edge connecting $a$ and $b$ will often be denoted by the ordered pair $(a,b)$. We may define also the combinatorial invariants $\l,\r,\co,\co_l,\co_r$ for elements $\ga$ of $\B'(X)$ in a natural way --- for example $\l(\ga)$ is the label of $\lv_\ga$, $\co_r(\ga)$ is the set of all labels of all right vertices of $\ga$, and so on. In a graphical representation of an object of $\B'(X)$ it will always be assumed that the vertices are arranged either as two vertical columns (the left/right column representing the left/right vertices) or as two horizontal rows (the bottom/top row representing the left/right vertices). Moreover, in such a graphical representation  it is often convenient to indicate the distinguished left and right roots by especially indicating the unique edge connecting these vertices (for example by a double line \tikz[baseline=0.15cm,scale=0.5]{\draw[double](0,0.5)--(1.5,0.5);}).

Next let $\B(X):=\B'(X)\cup X$ where each letter $x\in X$ is now represented by the single vertex graph $\underset{x}{\bullet}$ having the label $x$ (for $\ga=\underset{x}{\bullet}$ we assume that $\lv_\ga=\rv_\ga=\ga$).
For $\ga\in \B'(X)$ we let ${}^L\ga$ be the graph $\ga$ in which the distinguished right root is unmarked; that is, ${}^L\ga$ is now the same tree as $\ga$ but only the single root $\lv_\ga$ is distinguished --- however it is distinguished as a \textbf{left} root, that is, the single distinguished root determines the partition of the set of all vertices into the set of left vertices $L_\gamma$ and the set of right vertices $R_\gamma$. One sided distinguished vertices shall usually be represented as ``encircled bullets'' like this \tikz[baseline=-4pt,scale=0.3]{\coordinate  (1) at (0,0);
 \draw (1) node{$\bullet$}; \draw (1)circle (10pt);}.
For $\ga=\underset{x}{\bullet}$ we let ${}^L\ga$ coincide with $\ga$, however, in this case $\ga$ is viewed as a bipartite graph having one left vertex and no right vertices. Define $\ga^R$ dually.

Now we introduce a binary operation $\sqcap$ on $\B(X)$ by
$$\al\sqcap\be:= {}^L\al\mathrel{\dot{\cup}}\{(\lv_\al,\rv_\be)\}\mathrel{\dot{\cup}}\be^R.$$
That is, in order to get $\al\sqcap\be$ from $\al,\be\in \B(X)$, form the disjoint union of $\al$ and $\be$, declare the distinguished left vertex of $\al$ to be that of $\al\sqcap\be$, the distinguished right vertex of $\be$ to be that of $\al\sqcap \be$ and connect these two vertices by a new edge.

An easy inductive argument on the number of vertices shows that $\B(X)$ is an $X$-generated binary algebra.  Denote by $\De$ the unique homomorphism $F_2(X)\to \B(X)$ satisfying $\De(x)=\underset{x}{\bullet}$ for each $x\in X$. In particular, for each $\ga\in \B(X)$ there exists $u\in F_2(X)$ such that $\ga=\De(u)$. There is a unique homomorphism $\chi:\T(X)\to \B(X)$ satisfying $\De=\chi\circ \Ga$. It is essential for the understanding of many arguments in this paper that the assignment $\Ga(u)\mapsto \De(u)$ can itself be viewed as being induced by a quotient map from $\Ga(u)$ onto $\De(u)$.

We claim that $\De(u)$ can be obtained from $\Ga(u)$ as follows: the sets of left/right vertices of $\De(u)$ are in bijective correspondence with the sets of left/right leaves of $\Ga(u)$. Moreover, each left/right vertex of $\De(u)$ is obtained by contracting in $\Ga(u)$ a subgraph of the form $\bullet\rrbb\!\bullet\!\rrbb\  \dots\  \rrbb\!\bullet$ (a segment) containing exactly one leaf to a vertex. More precisely, this is done as follows. Denote, for two (not necessarily distinct) vertices $a$ and $b$ of $\Ga(u)$ the unique geodesic subgraph of $\Ga(u)$ starting at $a$ and ending at $b$ by $[a,b]$. For each left leaf $a$ let now $a'$ be the unique (left) vertex such that $[a,a']$ contains only left vertices and $[a,a']$ is maximal with this property, and proceed dually with each right
leaf $b$. (The root of $\Ga(u)$ is neither a left nor a right vertex.) In order to get the left/right vertices of $\De(u)$ now contract the segments of the form $[a,a']$ with $a$ a left leaf to a (left) vertex and each segment of the form $[b',b]$ with $b$ a right leaf to a (right) vertex. The left vertex corresponding to $[a,a']$ and the right vertex corresponding to $[b',b]$ then shall be connected by an edge in $\De(u)$ if $a'$ is connected by an edge in $\Ga(u)$ with a vertex in $[b',b]$, or $b'$ is connected by an
edge in $\Ga(u)$ with a vertex in $[a,a']$; or, equivalently,
if an edge of $\Ga(u)$ connects some vertex of $[a,a']$ with some of $[b',b]$.
In addition, if $a$ is the leftmost (left) leaf of $\Ga(u)$ and $b$ is the rightmost (right) leaf of $\Ga(u)$ then the corresponding vertices in $\De(u)$ shall be also connected by an edge and shall become the distinguished left and right vertices in $\De(u)$. The latter edge can be viewed as being obtained by contracting the segment $\rrbb\!\!\bullet\!\!\rrbb$ (with $\bullet$ the root of $\Ga(u)$) to an edge. Altogether, the so described quotient map $\chi_u:\Ga(u)\to \De(u)$ is not a graph homomorphism in the usual sense but it is `almost' a  ``contraction of a family of subtrees'' in the sense of Serre \cite{trees}. A formal proof of the mentioned nature of the mapping $\chi_u:\Ga(u)\to \De(u)$ can be done by induction on the number of leaves of $\Ga(w)=\Ga(u\wedge v)$ by taking into account the following observation: if $a$ denotes the leftmost leaf of $\Ga(u)$ and the segment $[a,a']$ is replaced by $[a,0]$ where $0$ denotes the root of $\Ga(u)$ then the left-rooted bipartite tree obtained by contracting segments and letting the distinguished left vertex correspond to the segment $[a,0]$ is exactly the left-rooted tree ${}^L\De(u)$  and dually, if for the rightmost leaf $b$ of $\Ga(v)$ the segment $[b',b]$ is replaced with $[0,b]$ one gets by the analogous procedure the right-rooted tree $\De(v)^R$.

Let us consider the example
$$u=((x\wedge(v\wedge z))\wedge x)\wedge((v\wedge z)\wedge((v\wedge w)\wedge y))$$
for $v,w,x,y,z\in X$. The graph $\Ga(u)$ with the segments $[a,a']$ and $[b',b]$ already indicated looks like this:

\centerline{
\tikz[scale=0.6]{
\coordinate (1) at (0,4);
\coordinate (2) at (-4,3);
\coordinate (3) at (4,3);
\coordinate (4) at (-6,2);
\coordinate[label=below:$x$] (5) at (-2,2);
\coordinate (6) at  (2,2);
\coordinate (7) at (6,2);
\coordinate[label=below:$x$] (8) at (-7,1);
\coordinate (9) at (-5,1);
\coordinate[label=below:$v$] (10) at (1,1);
\coordinate[label=below:$z$] (11) at (3,1);
\coordinate (12) at (5,1);
\coordinate[label=below:$y$] (13) at (7,1);
\coordinate[label=below:$v$] (14) at (-5.5,0);
\coordinate[label=below:$z$] (15) at (-4.5,0);
\coordinate[label=below:$v$] (16) at (4.5,0);
\coordinate[label=below:$w$] (17) at (5.5,0);
\draw (1) node {$\bullet$};
\draw (2) node {$\bullet$};
\draw (3) node {$\bullet$};
\draw (4) node {$\bullet$};
\draw (5) node {$\bullet$};
\draw (6) node {$\bullet$};
\draw (7) node {$\bullet$};
\draw (8) node {$\bullet$};
\draw (9) node {$\bullet$};
\draw (10) node {$\bullet$};
\draw (11) node {$\bullet$};
\draw (12) node {$\bullet$};
\draw (13) node {$\bullet$};
\draw (14) node {$\bullet$};
\draw (15) node {$\bullet$};
\draw (16) node {$\bullet$};
\draw (17) node {$\bullet$};
\draw (1)--(2)--(4)--(8);
\draw (2)--(5);
\draw (4)--(9)--(15);
\draw (9)--(14);
\draw (1)--(3)--(7)--(13);
\draw (3)--(6)--(10);
\draw (6)--(11);
\draw (7)--(12)--(16);
\draw (12)--(17);
\draw[rounded corners=3pt] (-3.9,3.25)--(-6.1,2.2)--(-7.25,1.05)--(-7.1,0.75)--(-5.9,1.8)--(-3.7,2.9)--cycle;
\draw[rounded corners=3pt] (-4.9,1.25)--(-4.25,-0.1)--(-4.6,-0.15)--(-5.25,1.15)--cycle;
\draw[rounded corners=3pt] (2,2.25)--(2.25,2)--(1,0.79)--(0.75,1)--cycle;
\draw[rounded corners=3pt] (4.95,1.29)--(5.25,1.15)--(4.55,-0.2)--(4.25,0)--cycle;
\draw[rounded corners=3pt] (3.9,3.25)--(6.1,2.2)--(7.25,1.05)--(6.99,0.75)--(5.9,1.8)--(3.7,2.9)--cycle;
\draw (5)circle (5pt);
\draw (11)circle (5pt);
\draw (14)circle (5pt);
\draw (17)circle (5pt);
\filldraw (7.3,2)circle (0.5pt);
}
}

{\parindent=0pt Contraction of the segments to vertices gives us the following `almost' bipartite graph}

\centerline{
\tikz[scale=0.6]{
\coordinate[label=left:$v$] (1) at (-5,0);
\coordinate[label=right:$z$] (2) at (-3,0);
\coordinate[label=left:$v$] (3) at (3,0);
\coordinate[label=right:$w$] (4) at (5,0);
\coordinate[label=left:$x$] (5) at (-5,1);
\coordinate[label=right:$x$] (6) at  (-3,1);
\coordinate[label=left:$v$] (7) at (3,1);
\coordinate[label=right:$y$] (8) at (5,1);
\coordinate[label=right:$z$] (9) at (5,0.5);
\coordinate (10) at (0,2);
\draw (1) node {$\bullet$};
\draw (2) node {$\bullet$};
\draw (3) node {$\bullet$};
\draw (4) node {$\bullet$};
\draw (5) node {$\bullet$};
\draw (6) node {$\bullet$};
\draw (7) node {$\bullet$};
\draw (8) node {$\bullet$};
\draw (9) node {$\bullet$};
\draw (10) node {$\bullet$};
\draw (1)--(2)--(5)--(10)--(8)--(3)--(4);
\draw (5)--(6);
\draw (9)--(7)--(8);
}
}

{\parindent=0pt and marking the left/right roots, that is, contracting  the segment $\rrbb\!\!\bullet\!\!\rrbb$ to the edge with distinguished endpoints finally yields $\De(u)$:}

\centerline{
\tikz[scale=0.6]{
\coordinate[label=below:$v$] (1) at (0,0);
\coordinate[label=below:$x$] (2) at (1,0);
\coordinate[label=below:$v$] (3) at (2,0);
\coordinate[label=below:$v$] (4) at (3.5,0);
\coordinate[label=above:$z$] (5) at (0,2);
\coordinate[label=above:$x$] (6) at (1,2);
\coordinate[label=above:$z$] (7) at (3,2);
\coordinate[label=above:$y$] (8) at (2,2);
\coordinate[label=above:$w$] (9) at (4,2);
\draw (1) node {$\bullet$};
\draw (2) node {$\bullet$};
\draw (3) node {$\bullet$};
\draw (4) node {$\bullet$};
\draw (5) node {$\bullet$};
\draw (6) node {$\bullet$};
\draw (7) node {$\bullet$};
\draw (8) node {$\bullet$};
\draw (9) node {$\bullet$};
\draw (1)--(5)--(2)--(6);
\draw (7)--(3)--(8)--(4)--(9);
\draw[double](2)--(8);
\filldraw (4.3,1)circle (0.5pt);}
}

\subsection{Description of $\De(u(s\to t))$ and of $\De(u\psi)$}
The quotient  map $\chi_u:\Ga(u)\to \De(u)$ leads to a better understanding of the following construction. Let $u,s,t\in F_2(X)$ with $s$ a subword of $u$; let $u(s\to t)$ be the word obtained from $u$ by replacing  (a particular, earlier chosen occurrence of) the subword $s$ by $t$. We shall describe $\De(u(s\to t))$ in terms of $\De(u)$, $\De(s)$ and $\De(t)$.

First of all, recall that each vertex in $\Ga(u)$ corresponds to a unique subword of $u$. Take the vertex $a$, say, that corresponds to the occurrence of $s$ in $u$ that should be replaced. The binary tree that is formed from the vertex $a$ as root together with all vertices that can be reached by going downwards from $a$ is just $\Ga(s)$. It is obvious how $\Ga(u(s\to t))$ is formed: just replace in $\Ga(u)$ the subtree $\Ga(s)$ by $\Ga(t)$. Now, looking at the quotient mapping $\chi_u:\Ga(u)\to \De(u)$ we see that $\chi_u(\Ga(s))$ forms a subtree of $\De(u)$, namely the subtree of $\De(u)$ spanned by all vertices that are in the image under $\chi_u$ of all leaves of $\Ga(s)$. Let us consider $\chi_u(\Ga(s))$ as a bipartite graph with one distinguished vertex as follows: let the distinguished vertex of $\chi_u(\Ga(s))$ be the image of the leftmost leaf of $\Ga(s)$ if the root of $\Ga(s)$ is a left vertex in $\Ga(u)$ and the image of the rightmost leaf of $\Ga(s)$ if the root of $\Ga(s)$ is a right vertex of $\Ga(u)$ (we need only consider proper subwords $s$ of $u$, so the root of $\Ga(s)$ does not coincide with the root of $\Ga(u)$). Inspection of the map $\chi_u$ then shows that the left/right-rooted bipartite tree $\chi_u(\Ga(s))$ coincides with (a copy of)
${}^L\De(s)$ or $\De(s)^R$ depending on whether the root of $\Ga(s)$ is a left or a right vertex. Similarly, inspection of the map $\chi_{u(s\to t)}$ shows that $\chi_{u(s\to t)}(\Ga(t))$ (viewed as a left/right-rooted bipartite
tree depending on whether the root of $\Ga(t)$ in $\Ga(u(s\to t))$ is a left/right vertex) coincides with ${}^L\De(t)$ respectively $\De(t)^R$. Altogether, this means that $\De(u(s\to t))$ can be obtained from $\De(u)$ by replacing $\chi_u(\Ga(s))={}^L\De(s)$ by ${}^L\De(t)$ respectively $\chi_u(\Ga(s))=\De(s)^R$  by $\De(t)^R$.

As an example, consider  $s=(v\wedge w)\wedge y$, $t=(v\wedge (x\wedge w))\wedge (y\wedge x)$ and again $u=((x\wedge(v\wedge z))\wedge x)\wedge((v\wedge z)\wedge(\underline{(v\wedge w)\wedge y}))$ with $s$ the underlined subword. Note that the root of $\Ga(s)$ inside $\Ga(u)$ is a right vertex. Hence, in $\De(u)$ we have to replace the right-rooted tree $\chi_u(\Ga(s))=\De(s)^R=$
\tikz[baseline=1pt,scale=0.3]{\coordinate [label=left:$v$] (1) at (0,0.5);\coordinate [label=right:$y$] (2) at (2,1);
\coordinate[label=right:$w$] (3) at (2,0);  \draw (2,1) -- (0,0.5) --(2,0); \draw (0,0.5) node{$\bullet$}; \draw (2,1) node{$\bullet$}; \draw (2,0) node{$\bullet$};\draw (2)circle (10pt); } by $\De(t)^R$. Inside the graph $\De(u)$, $\chi_u(\Ga(s))$ is the indicated subtree:

\centerline{
\tikz[scale=0.6]{
\coordinate[label=below:$v$] (1) at (0,0);
\coordinate[label=below:$x$] (2) at (1,0);
\coordinate[label=below:$v$] (3) at (2,0);
\coordinate[label=below:$v$] (4) at (3.5,0);
\coordinate[label=above:$z$] (5) at (0,2);
\coordinate[label=above:$x$] (6) at (1,2);
\coordinate[label=above:$z$] (7) at (3,2);
\coordinate[label=above:$y$] (8) at (2,2);
\coordinate[label=above:$w$] (9) at (4,2);
\draw (1) node {$\bullet$};
\draw (2) node {$\bullet$};
\draw (3) node {$\bullet$};
\draw (4) node {$\bullet$};
\draw (5) node {$\bullet$};
\draw (6) node {$\bullet$};
\draw (7) node {$\bullet$};
\draw (8) node {$\bullet$};
\draw (9) node {$\bullet$};
\draw (1)--(5)--(2)--(6);
\draw (7)--(3)--(8)--(4)--(9);
\draw[double](2)--(8);
\draw[dotted,rounded corners=3pt] (2,2.3)--(3.5,1)--(3.8,2.2)--(4.2,2.2)--(3.7,-0.2)--(3.3,-0.2)--(1.7,2)--cycle;
\draw (8)circle (5pt);
\filldraw (4.2,1)circle (0.5pt);}
}

{\parindent=0pt One readily checks that }

\centerline{\tikz[scale=0.5]{
\draw (-2.3,0) node{$\De(t)^R=$};
\coordinate[label=left:$y$] (1) at (0,-1);
\coordinate[label=left:$v$] (2) at (0,0);
\coordinate[label=left:$x$] (3) at (0,1);
\coordinate[label=right:$x$] (4) at (2,-1);
\coordinate[label=right:$w$] (5) at (2,1);
\draw (1) node {$\bullet$};
\draw (2) node {$\bullet$};
\draw (3) node {$\bullet$};
\draw (4) node {$\bullet$};
\draw (5) node {$\bullet$};
\draw (4)circle (6pt);
\draw (1)--(4)--(2)--(5)--(3);
\filldraw (3,0)circle (0.5pt);
}}

{\parindent=0pt Hence, $\chi_u(\Ga(s))$ replaced with $\De(t)^R$ yields}

\centerline{
\tikz[scale=0.6]{
\coordinate[label=below:$v$] (1) at (0,0);
\coordinate[label=below:$x$] (2) at (1,0);
\coordinate[label=below:$v$] (3) at (2,0);
\coordinate[label=below:$v$] (4) at (3.75,0);
\coordinate[label=above:$z$] (5) at (0,2);
\coordinate[label=above:$x$] (6) at (1,2);
\coordinate[label=above:$z$] (7) at (3,2);
\coordinate[label=above:$x$] (8) at (2,2);
\coordinate[label=above:$w$] (9) at (4,2);
\coordinate[label=below:$y$] (10) at (3,0);
\coordinate[label=below:$x$] (11) at (5,0);
\draw (1) node {$\bullet$};
\draw (2) node {$\bullet$};
\draw (3) node {$\bullet$};
\draw (4) node {$\bullet$};
\draw (5) node {$\bullet$};
\draw (6) node {$\bullet$};
\draw (7) node {$\bullet$};
\draw (8) node {$\bullet$};
\draw (9) node {$\bullet$};
\draw (10) node {$\bullet$};
\draw (11) node {$\bullet$};
\draw (1)--(5)--(2)--(6);
\draw (7)--(3)--(8)--(4)--(9);
\draw[double](2)--(8);
\draw (9)--(11); \draw (8)--(10);
\draw[dotted,rounded corners=3pt] (2,2.25)--(3.5,0.7)--(3.8,2.2)--(4.2,2.2)--(5.25,-0.15)--(2.8,-0.15)--(1.75,2.1)--cycle;
\filldraw (5.2,1)circle (0.5pt);}
}

This procedure can be applied simultaneously to several pairwise disjoint subwords $s_1,s_2,\dots, s_k$. A particularly important instance of the latter is when each letter $x$ in $u$ is substituted with a certain word $w_x$.
In other words, for an endomorphism $\psi:F_2(X)\to F_2(X)$ we shall describe $\De(u\psi)$ in terms of $\De(u)$ and $\De(x_i\psi)$ ($i=1,\dots, n$) where $x_1,\dots, x_n$ are the letters occurring in $u$. Indeed, for a vertex $a$ of $\De(u)$ denote by $\cb_a$ the label of $a$. In order to apply the procedure described above (to all one-letter subwords  simultaneously) replace in $\De(u)$ each left vertex $a$ by the left-rooted tree $\Ls(a,\psi):={}^L\De(\cb_a\psi)$ and each right vertex $b$ by the right-rooted tree $\Rs(b,\psi):=\De(\cb_b\psi)^R$ (all these graphs shall be assumed to be pairwise disjoint). The result is the graph $\De(u\psi)$ where the distinguished left vertex of $\De(u\psi)$ is the distinguished (left) vertex of $\Ls(\lv_{\De(u)},\psi)$ while the distinguished right vertex of $\De(u\psi)$ is the distinguished (right) vertex of $\Rs(\rv_{\De(u)},\psi)$.
Alternatively, $\De(u\psi)$ can be obtained as follows: form the disjoint union
$$\left(\bigcup_{a\in L_{\De(u)}}\Ls(a,\psi)\right)\,\cup\,\left( \bigcup_{b\in R_{\De(u)}}\Rs(b,\psi)\right)$$
of all graphs $\Ls(a,\psi)$ and $\Rs(b,\psi)$ and add, for each edge $(a,b)$ of $\De(u)$ the edge $(\lv_{\Ls(a,\psi)},\rv_{\Rs(b,\psi)})$. For $(a,b)=(\lv_{\De(u)},\rv_{\De(u)})$ this yields the connection between the distinguished vertices $\lv_{\Ls(\lv_{\De(u)},\psi)}$ and $\rv_{\Rs(\rv_{\De(u)},\psi)}$ of $\De(u\psi)$.

There is a subtree of $\De(u\psi)$, the \emph{skeleton} $\sk(u,\psi)$, which is the subtree of $\De(u\psi)$ spanned by the set of vertices
$$\{\lv_{\Ls(a,\psi)}\mid a\in L_{\De(u)}\}\cup\{\rv_{\Rs(b,\psi)}\mid b\in R_{\De(u)}\}$$
or likewise, spanned by all edges $(\lv_{\Ls(a,\psi)},\rv_{\Rs(b,\psi)})$ for $(a,b)$ an edge in $\De(u)$.
The graph $\sk(u,\psi)$ has the same structure as $\De(u)$ except that the labels of the vertices have changed. Indeed, the label of each left vertex $a$ of $\De(u)$ is changed from $\cb_a$ to $\l(\cb_a\psi)$ and the label of each right vertex $b$ of $\De(u)$ is changed from $\cb_b$ to $\r(\cb_b\psi)$. In case the left content of $u$ is disjoint from its right content, the skeleton $\sk(u,\psi)$ itself can be viewed as a graph of the form $\De(u\psi')$ for any endomorphism $\psi'$ satisfying $x\psi'=\l(x\psi)$ if $x\in \co_l(u)$ and $x\psi'=\r(x\psi)$ if $x\in \co_r(u)$.

An immediate consequence of the description of $\De(u\psi)$ is that $\B(X)$ is a relatively free binary algebra, that is, the kernel $\mathrm{ker}\De$ of the homomorphism $\De:F_2(X)\to \B(X)$ is a fully invariant congruence.

\begin{Cor} \label{deltafullyinvariant} For all $u,v\in F_2(X)$ and each endomorphism $\psi:F_2(X)\to F_2(X)$, if $\De(u)=\De(v)$ then $\De(u\psi)=\De(v\psi)$. In particular, $\mathrm{ker}\De$ is a fully invariant congruence on $F_2(X)$.
\end{Cor}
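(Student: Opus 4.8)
The plan is to read the statement off directly from the explicit description of $\De(u\psi)$ established immediately above. The whole point is that that description exhibits $\De(u\psi)$ as the output of a construction whose only inputs are the labeled bipartite graph $\De(u)$ (with its distinguished roots) and the endomorphism $\psi$; the particular word $u$ plays no role beyond its image $\De(u)$.

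First I would recall the recipe in its edge-wise form: $\De(u\psi)$ is the disjoint union
$$\left(\bigcup_{a\in L_{\De(u)}}\Ls(a,\psi)\right)\cup\left(\bigcup_{b\in R_{\De(u)}}\Rs(b,\psi)\right)$$
of the gadgets $\Ls(a,\psi)={}^L\De(\cb_a\psi)$ and $\Rs(b,\psi)=\De(\cb_b\psi)^R$, to which one adds, for each edge $(a,b)$ of $\De(u)$, the edge $(\lv_{\Ls(a,\psi)},\rv_{\Rs(b,\psi)})$, and in which the distinguished roots are those of the gadgets attached to $\lv_{\De(u)}$ and $\rv_{\De(u)}$.

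Next I would observe that every ingredient of this recipe is intrinsic to $\De(u)$ as a labeled bipartite graph with distinguished roots: the vertex sets $L_{\De(u)}$ and $R_{\De(u)}$, the labels $\cb_a$ (which alone determine the gadgets $\Ls(a,\psi)$ and $\Rs(b,\psi)$ once $\psi$ is fixed), the edge relation, and the two distinguished roots are all parts of $\De(u)$ itself. Consequently, if $\De(u)=\De(v)$, then an identification of these two graphs matches left vertices to left vertices and right vertices to right vertices, preserves all labels (hence carries each gadget $\Ls(a,\psi)$, $\Rs(b,\psi)$ to the corresponding gadget on the other side), preserves the edge relation, and matches the distinguished roots; applying the recipe therefore produces literally the same labeled bipartite graph in both cases, that is, $\De(u\psi)=\De(v\psi)$. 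For the final sentence, note that $\De$ is a homomorphism, so $\ker\De$ is automatically a congruence on $F_2(X)$; full invariance is by definition the assertion that $(u,v)\in\ker\De$ implies $(u\psi,v\psi)\in\ker\De$ for every endomorphism $\psi$, which is exactly the first statement.

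The only point requiring care is to make precise that the construction genuinely factors through $\De(u)$ --- that no step secretly uses information about $u$ that is lost upon passing to $\De(u)$. I would handle this by appealing to the edge-wise formulation displayed above rather than to the original inductive quotient description $\chi_u:\Ga(u)\to\De(u)$: the edge-wise formula mentions only $L_{\De(u)}$, $R_{\De(u)}$, the labels $\cb_a$, and the edges of $\De(u)$, and so visibly depends on $\De(u)$ alone. Once this is granted the corollary is immediate, consistent with its billing above as an immediate consequence of the description of $\De(u\psi)$.
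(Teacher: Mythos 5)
Your proposal is correct and takes essentially the same approach as the paper: the paper offers no separate proof, presenting the corollary as an immediate consequence of the substitution description of $\De(u\psi)$, which is exactly what you invoke via the edge-wise formulation. Your extra care in verifying that the recipe depends only on $\De(u)$ as a labeled rooted bipartite graph (and not on $u$ itself) simply makes explicit what the paper leaves implicit.
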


\section{The free pseudosemilattice}\label{freePS}
\subsection{The binary algebra $\A(X)$}
Let $\al\in \B'(X)$; a degree $1$ vertex $a$ of $\al$ together with the unique edge $e$ that has $a$ as one of its endpoints is a \emph{thorn} if the two vertices connected by $e$ have the same label. A thorn is \emph{essential} if its vertex is a distinguished one, otherwise it is \emph{non-essential}. We introduce two reduction rules for modifying a member $\al$ of $\B'(X)$:

\begin{enumerate}
\item[(i)] remove a non-essential thorn $\{e,a\}$ from $\al$. This rule may be visualized graphically as

\centerline{\tikz[baseline=-0.1cm,scale=0.5]{
\coordinate[label=below:$x$] (1) at (0,0);
\coordinate[label=below:$x$] (2) at (2,0);
\coordinate[label=below:$x$] (3) at (4,0);
\draw (1) node{$\bullet$};
\draw (2) node{$\bullet$};
\draw (3) node{$\bullet$};
\draw (3,0) node{$\mapsto$};
\draw (1)--(2);
\draw[decorate,decoration=saw] (1)--(2,1);
\draw[decorate,decoration=saw] (3)--(6,1);
}
and \tikz[baseline=-0.1cm,scale=0.5]{
\coordinate[label=below:$x$] (1) at (0,0);
\coordinate[label=below:$x$] (2) at (2,0);
\coordinate[label=below:$x$] (3) at (6,0);
\draw (1) node{$\bullet$};
\draw (2) node{$\bullet$};
\draw (3) node{$\bullet$};
\draw (3,0) node{$\mapsto$};
\draw (1)--(2);
\draw[decorate,decoration=saw] (2)--(0,1);
\draw[decorate,decoration=saw] (3)--(4,1);
\filldraw (6.5,0)circle (0.5pt);}}
\item[(ii)] suppose that two edges $e$ and $f$ have a vertex in common and that the two other (distinct) vertices $a$ and $b$ have the same label; then identify the two edges $e$ and $f$ and the vertices $a$ and $b$ (and retain their label).
    If one of the merged vertices happens to be a distinguished one then so is
the resulting vertex. Graphically, this rule may be visualized as

    \centerline{
\tikz[baseline=0.2cm,scale=0.5]{\coordinate [label=below:$x$] (1) at (0,0.5);\coordinate [label=above:$y$] (2) at (2,1);
\coordinate[label=below:$y$] (3) at (2,0); \coordinate [label=below:$x$] (4) at (4,0.5); \coordinate [label=below:$y$] (5) at (6,0.5); \draw (2,1) -- (0,0.5) --(2,0); \draw (0,0.5) node{$\bullet$}; \draw (2,1) node{$\bullet$}; \draw (2,0) node{$\bullet$};\draw (3,0.5) node{$\mapsto$};\draw (4,0.5)--(6,0.5);\draw (4,0.5) node{$\bullet$}; \draw[decorate,decoration=saw](0,0.5)--(1.5,2);
\draw[decorate,decoration=saw](2,0)--(0,-0.5);
\draw[decorate,decoration=saw](2,1)--(0,1.5);
\draw[decorate,decoration=saw](4,0.5)--(5.5,2);
\draw[decorate,decoration=saw](6,0.5)--(4,-0.5);
\draw[decorate,decoration=saw](6,0.5)--(4,1.5);
\draw(6,0.5)node{$\bullet$};}
and \tikz[baseline=0.2cm,scale=0.5]{
\coordinate[label=above:$y$] (1) at (0,1);
\coordinate[label=below:$y$] (2) at (0,0);
\coordinate[label=below:$x$] (3) at (2,0.5);
\coordinate[label=below:$y$] (4) at (4,0.5);
\coordinate[label=below:$x$] (5) at (6,0.5);
\draw (1) node{$\bullet$};
\draw (2) node{$\bullet$};
\draw (3) node{$\bullet$};
\draw (4) node{$\bullet$};
\draw (5) node{$\bullet$};
\draw (3,0.5) node{$\mapsto$};
\draw (1)--(3)--(2); \draw (4)--(5);
\draw[decorate,decoration=saw](0,1)--(2,1.5);
\draw[decorate,decoration=saw](0,0)--(2,-0.5);
\draw[decorate,decoration=saw](2,0.5)--(0.5,2);
\draw[decorate,decoration=saw](4,0.5)--(6,1.5);
\draw[decorate,decoration=saw](4,0.5)--(6,-0.5);
\draw[decorate,decoration=saw](6,0.5)--(4.5,2);
\filldraw (6.5,0.5)circle (0.5pt);}
}
\end{enumerate}

Rule (i) is referred to as the \emph{deletion of a thorn} while rule (ii) is called an \emph{edge-folding}. If we apply the reductions (i) and (ii) to $\al\in \B'(X)$ in any order until no more reduction is possible then we obtain the \emph{reduced form} $\ol{\al}$ of $\al$. Note that $\ol\al$ is uniquely determined and does not depend on the order the reductions are applied. Let $\A(X):=\ol{\B'(X)}$ be the set of all reduced members of $\B'(X)$. Setting $\ol\al:=$\tikz[baseline=2.25pt,scale=0.3]{\coordinate [label=below:$x$] (4) at (0,0.5); \coordinate [label=below:$x$] (5) at (2.5,0.5);\draw[double] (4) --(5);\draw (4) node{$\bullet$};\draw (5) node{$\bullet$};} for $\al=\underset{x}{\bullet}$, the mapping $\al\to\ol\al$ is  surjective from $\B(X)$ onto $\A(X)$. We define a binary operation $\wedge$ on $\A(X)$ by the rule
$$\al\wedge\be:=\ol{\al\sqcap \be}.$$
For $\al,\be\in \B'(X)$ we have $\ol{\ol{\al}\sqcap\ol{\be}}=\ol{\al\sqcap\be}$ since the reductions to obtain $\ol{\al\sqcap\be}$ from $\al\sqcap\be$ may be applied in any order. This statement stays true if $\al$ and/or $\be$ is taken from $\B(X)=\B'(X)\cup X$. It follows that the mapping $\al\mapsto \ol\al$ is a surjective homomorphism $\B(X)\to \A(X)$.  Note that $\A(X)$ is an $X$-generated binary algebra if we identify $x\in X$ with \tikz[baseline=2.25pt,scale=0.3]{\coordinate [label=below:$x$] (4) at (0,0.5); \coordinate [label=below:$x$] (5) at (2.5,0.5);\draw[double] (4) --(5);\draw (4) node{$\bullet$};\draw (5) node{$\bullet$};}. We denote the canonical homomorphism $F_2(X)\to \A(X)$ by $\Te$. Subject to this notation, $\Te(u)=\ol{\De(u)}$ for each $u\in F_2(X)$.

Next we show that the kernel of $\Te$ is also a fully invariant congruence, that is, $(\A(X),\wedge)$ is also a relatively free binary algebra generated by $$\{\text{\tikz[baseline=2.25pt,scale=0.3]{\coordinate [label=below:$x$] (4) at (0,0.5); \coordinate [label=below:$x$] (5) at (2.5,0.5);\draw[double] (4) --(5);\draw (4) node{$\bullet$};\draw (5) node{$\bullet$};}}\mid x\in X\}.$$
We start by proving some auxiliary facts about the binary operation $\wedge$ in $\A(X)$.
\begin{Lemma}\label{idempotency} For each $\ga\in \A(X)$ we have $\ga\wedge\ga=\ga$.
\end{Lemma}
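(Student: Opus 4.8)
The goal is to show $\ga \wedge \ga = \ga$ for every $\ga \in \A(X)$, where $\ga \wedge \ga = \ol{\ga \sqcap \ga}$. The plan is to analyze the graph $\ga \sqcap \ga$ explicitly and show that the reduction rules (i) and (ii) collapse it back to $\ga$. Recall that $\ga \sqcap \ga = {}^L\ga \mathrel{\dot\cup} \{(\lv_\ga, \rv_\ga)\} \mathrel{\dot\cup} \ga^R$: we take two disjoint copies of $\ga$, keep the left root $\lv_\ga$ of the first copy (call it $\ga_1$) as the new left root, keep the right root $\rv_\ga$ of the second copy ($\ga_2$) as the new right root, and join them with a fresh edge. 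Since $\ga$ is reduced, its left and right roots are adjacent and carry respectively the labels $\l(\ga)$ and $\r(\ga)$.

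First I would handle the two degenerate cases separately. If $\ga = $ \tikz[baseline=2.25pt,scale=0.3]{\coordinate [label=below:$x$] (4) at (0,0.5); \coordinate [label=below:$x$] (5) at (2.5,0.5);\draw[double] (4) --(5);\draw (4) node{$\bullet$};\draw (5) node{$\bullet$};} is the image of a single letter $x$, then ${}^L\ga$ and $\ga^R$ are single vertices labeled $x$, and $\ga \sqcap \ga$ is a path of three $x$-labeled vertices with the outer two distinguished; the middle vertex together with one edge forms a non-essential thorn, which rule (i) deletes, returning $\ga$. For the generic case $\ga \in \B'(X)$ reduced, the key structural observation is that the newly added edge $(\lv_\ga, \rv_\ga)$ connecting $\ga_1$ to $\ga_2$ has endpoints carrying labels $\l(\ga)$ and $\r(\ga)$, which are exactly the labels of the distinguished edge already present inside each copy. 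This sets up an edge-folding (rule (ii)).

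The main step is to verify that iterating rule (ii) identifies $\ga_1$ with $\ga_2$ along corresponding vertices, thereby collapsing the two copies into one. The idea is to fold the new connecting edge against the distinguished edge of, say, $\ga_2$: the new edge $(\lv_{\ga_1}, \rv_{\ga_2})$ and the old distinguished edge $(\lv_{\ga_2}, \rv_{\ga_2})$ share the vertex $\rv_{\ga_2}$, and their other endpoints $\lv_{\ga_1}$ and $\lv_{\ga_2}$ both carry the label $\l(\ga)$; so rule (ii) merges $\lv_{\ga_1}$ with $\lv_{\ga_2}$. Once these two vertices are identified, their neighbours become candidates for further folding, and I would argue by induction on the distance from the root (using that $\ga$ is a \emph{tree}, so neighbours are uniquely determined by adjacency) that the entire copy $\ga_1$ folds onto $\ga_2$, matching each vertex of $\ga_1$ with the corresponding vertex of $\ga_2$ having the same label and same position. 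The bipartite tree structure guarantees that once two adjacent vertices (one left, one right) of $\ga_1$ are identified with their counterparts in $\ga_2$, the folding propagates uniquely outward. After all identifications, what remains is a single copy of $\ga$ with the correct distinguished edge.

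The step I expect to be the main obstacle is proving that the folding propagates \emph{cleanly} along the whole tree and terminates exactly at $\ga$, without creating spurious identifications or leaving residual thorns. The cleanest way to make this rigorous is to exhibit a graph homomorphism from $\ga \sqcap \ga$ onto $\ga$ that folds $\ga_1$ onto $\ga_2$ identically (sending corresponding vertices to the same image) and to check that this map realizes a sequence of rule-(ii) folds; since reduced forms are unique and independent of the order of reductions (as stated earlier in the excerpt), it then suffices to confirm that the image is already reduced, i.e. that $\ga$ admits no further thorn deletion or folding — which holds precisely because $\ga \in \A(X)$ is by definition reduced. Thus $\ol{\ga \sqcap \ga} = \ga$, as required.
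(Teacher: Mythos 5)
Your main argument is correct and is essentially the paper's own proof: fold the new edge $(\lv_{\ga_1},\rv_{\ga_2})$ onto the distinguished edge $(\lv_{\ga_2},\rv_{\ga_2})$ of the second copy, thereby merging the two left roots, then propagate edge-foldings outward through the tree until every vertex and edge of $\ga_1$ is identified with its counterpart in $\ga_2$ (the paper organizes this propagation along geodesic paths from the merged root); the result is a copy of $\ga$, which is already reduced since $\ga\in\A(X)$, and uniqueness of the reduced form finishes the argument.

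However, your separate treatment of the one-letter case is both unnecessary and incorrect as written. The element of $\A(X)$ representing a letter $x$ is the two-vertex graph with both vertices labeled $x$ joined by the distinguished edge; this graph lies in $\B'(X)$, and for it ${}^L\ga$ is by definition the \emph{whole} two-vertex graph with only the left root marked --- you appear to be confusing ${}^L\ga$ with the truncated graph ${}^l\ga$, which the paper only introduces later (subsection on the relations ${\le}_{\Rc}$, ${\le}_{\Lc}$), or with the convention for the singleton $\underset{x}{\bullet}\in X$, which is an element of $\B(X)$ but not of $\A(X)$. Consequently $\ga\sqcap\ga$ is here a path on \emph{four} $x$-labeled vertices, not three. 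Moreover, in the three-vertex configuration you describe, the middle vertex has degree $2$ and can never be removed by rule (i): a thorn consists of a degree-$1$ vertex together with its unique edge (and in a three-vertex path the two outer vertices are not adjacent, so they could not even be the distinguished pair). None of this damages the proof, because the two-vertex graph is an ordinary reduced member of $\B'(X)$ and your generic folding argument applies to it verbatim: simply drop the case distinction.
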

\begin{proof} We need to show that $\ol{\ga\sqcap\ga}=\ga$. Let $\ga'$ be a disjoint copy of $\ga$. We form the graph
$${}^L\ga'\cup\{(\lv_{\ga'},\rv_\ga)\}\cup \ga^R$$
and observe that the two edges $(\lv_{\ga'},\rv_\ga)$ and $(\lv_\ga,\rv_\ga)$ may be identified by a first edge-folding. In particular, the  respective distinguished left vertices $\lv_{\ga'}$ and $\lv_\ga$ will be identified. Now let $a$ be an arbitrary vertex of $\ga$ and let
$$\lv_\ga, a_1,\dots,a_n,a$$
be the geodesic path in $\ga$ starting at $\lv_\ga$ and ending at $a$. Let
$$\lv_\ga=\lv_{\ga'},a_1',\dots, a_n',a'$$
be the corresponding path in $\ga'$. Since for each $i$, $a_i$ and $a_i'$ have the same label we may  identify the edges $(\lv_\ga,a_1)$ and $(\lv_\ga,a_1')$ and thus the vertices $a_1$ and $a_1'$ will be identified. Next we may identify the edges $(a_2,a_1)$ and $(a_2',a_1)$ and thus $a_2$ and $a_2'$ will be identified. Eventually the vertex $a$ will be identified with the vertex $a'$. In that process, each edge $(a,b)$ of $\ga$ will be identified with its counterpart $(a',b')$ in $\ga'$. After a finite sequence of edge-foldings we arrive at a graph isomorphic with $\ga$ which is reduced by assumption. Altogether, $\ol{\ga\sqcap\ga}=\ga$.
\end{proof}
\begin{Lemma}\label{foldingsubstitution} Let $\ga\in\B'(X)$ and let $\ga'$ be a disjoint copy of $\ga$. Let $\al$ be the bipartite tree defined by
$$\al=\ga\cup\{a,(a,\rv_\ga),(a,\rv_{\ga'})\}\cup \ga'$$
where $a$ is a new, arbitrarily labeled left vertex adjoined to $\ga\cup\ga'$. Then $\al$ can be reduced by a sequence of edge-foldings   to a graph isomorphic to $\{a,(a,\rv_\ga)\}\cup \ga$. The analogous assertion for $\al=\ga\cup\{b,(\lv_\ga,b),(\lv_{\ga'},b)\}\cup \ga'$ with $b$ a new right vertex also holds.
\end{Lemma}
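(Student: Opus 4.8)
The plan is to imitate the folding argument from the proof of Lemma~\ref{idempotency}: starting at the shared vertex $a$, fold the copy $\ga'$ onto $\ga$, propagating the identifications outward through the tree until $\ga'$ has been completely absorbed into $\ga$. Since $\ga'$ is a disjoint copy of $\ga$, fix the label-preserving isomorphism $c\mapsto c'$ between their vertices; in particular $\rv_\ga$ and $\rv_{\ga'}$ carry the same label.

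First I would perform the initial edge-folding at $a$: the two edges $(a,\rv_\ga)$ and $(a,\rv_{\ga'})$ share the vertex $a$, while their remaining endpoints $\rv_\ga,\rv_{\ga'}$ are distinct and equally labeled, so rule~(ii) applies and identifies these two edges together with the vertices $\rv_\ga$ and $\rv_{\ga'}$. After this step $a$ is joined to the merged right root by a single edge, exactly as in the target graph. Then I would propagate the folding: for a vertex $c$ of $\ga$ let $\rv_\ga=c_0,c_1,\dots,c_k=c$ be the (unique, as $\ga$ is a tree) geodesic from $\rv_\ga$ to $c$, with corresponding path $c_0',\dots,c_k'$ in $\ga'$. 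Processing vertices by nondecreasing distance from $\rv_\ga$, and assuming inductively that $c_i$ and $c_i'$ have already been identified, the edges $(c_i,c_{i+1})$ and $(c_i,c_{i+1}')$ share $c_i$ and have equally labeled other endpoints, so a further edge-folding identifies $c_{i+1}$ with $c_{i+1}'$. Because $\ga$ is a tree, every non-root vertex is reached through a unique parent edge, so each identification is applied exactly once and no conflict arises; after finitely many steps every vertex and edge of $\ga'$ has merged with its counterpart in $\ga$, leaving $\{a,(a,\rv_\ga)\}\cup\ga$.

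The point demanding care is to verify that this prescribed sequence of foldings is legitimate at each stage --- that the two edges to be folded really are distinct and share a vertex at the moment of application --- and that applying only these foldings, and none internal to $\ga$, yields exactly $\{a,(a,\rv_\ga)\}\cup\ga$ rather than some further reduction; both follow from the acyclicity of the tree and the uniqueness of geodesics. The dual assertion is obtained by interchanging the roles of left and right vertices throughout: attach the new right vertex $b$ to $\lv_\ga$ and $\lv_{\ga'}$ and fold along the geodesics emanating from $\lv_\ga$.
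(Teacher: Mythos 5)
Your proposal is correct and follows essentially the same route as the paper: the paper's proof also performs the initial edge-folding at $a$ to merge $\rv_\ga$ with $\rv_{\ga'}$ and then invokes the geodesic-propagation argument of Lemma~\ref{idempotency} to identify every vertex and edge of $\ga'$ with its counterpart in $\ga$. You have merely written out explicitly the induction along geodesics that the paper cites by reference.
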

\begin{proof} A first edge-folding may be used to identify the edges $(a,\rv_\ga)$ and $(a,\rv_{\ga'})$ and thus the vertices $\rv_\ga$ and $\rv_{\ga'}$ are also identified. Now we may proceed as in Lemma \ref{idempotency} and show that each vertex and edge of $\ga$ is identified with its counterpart in $\ga'$.
\end{proof}
\begin{Cor}\label{thetafullyinvariant} Let $\psi$ be an endomorphism of $F_2(X)$ and let $u,u'\in F_2(X)$ be such that $\De(u')$ is obtained from $\De(u)$ by a single deletion of a thorn or a single edge-folding; then $\Te(u\psi)=\Te(u'\psi)$.
\end{Cor}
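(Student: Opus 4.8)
The plan is to compare $\De(u\psi)$ and $\De(u'\psi)$ directly, using the vertex-blow-up description of $\De(\,\cdot\,\psi)$ established above, and to exhibit a sequence of edge-foldings reducing $\De(u\psi)$ to $\De(u'\psi)$. Since reduced forms are well defined and confluent, this gives $\Te(u\psi)=\ol{\De(u\psi)}=\ol{\De(u'\psi)}=\Te(u'\psi)$. Recall that $\De(u\psi)$ is obtained from $\De(u)$ by replacing each left vertex $a$ with a disjoint copy $\Ls(a,\psi)={}^L\De(\cb_a\psi)$ and each right vertex $b$ with $\Rs(b,\psi)=\De(\cb_b\psi)^R$, reconnecting distinguished roots along the edges of $\De(u)$. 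Because $\De(u')$ differs from $\De(u)$ only in a bounded part of the graph, the two blow-ups agree outside the corresponding region, and it suffices to reduce that region; the case $u\in X$ is vacuous as then $\De(u)$ admits no reduction.

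\emph{Edge-folding case.} Here $\De(u)$ has a right vertex $c$ adjacent to two left vertices $a,b$ with $\cb_a=\cb_b=x$ (the situation with a common left vertex is dual), and $\De(u')$ identifies $a$ and $b$. Writing $w=x\psi$, in $\De(u\psi)$ the vertex $\rv_{\Rs(c,\psi)}$ is joined to $\lv_{\Ls(a,\psi)}$ and to $\lv_{\Ls(b,\psi)}$, where $\Ls(a,\psi)$ and $\Ls(b,\psi)$ are two disjoint copies of $\De(w)$. This is exactly the configuration of the dual of Lemma \ref{foldingsubstitution}, with the two copies playing the roles of $\ga,\ga'$ and $\rv_{\Rs(c,\psi)}$ the common adjoined vertex; hence one copy folds onto the other, and the resulting graph is $\De(u'\psi)$, in which $a,b$ have become a single blown-up vertex attached to $\Rs(c,\psi)$ by a single edge.

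\emph{Thorn-deletion case.} Here $\De(u)$ carries a non-essential thorn, say a degree-$1$ left vertex $a$ joined to a right vertex $b$ with $\cb_a=\cb_b=x$, and $\De(u')$ removes $a$ together with its edge. With $w=x\psi$, the local subgraph of $\De(u\psi)$ consisting of $\Ls(a,\psi)={}^L\De(w)$, of $\Rs(b,\psi)=\De(w)^R$, and of the connecting edge $(\lv_{\Ls(a,\psi)},\rv_{\Rs(b,\psi)})$ is precisely a copy of $\De(w)\sqcap\De(w)$. By Lemma \ref{idempotency} it folds to a single copy of $\De(w)$, namely the one sitting at $b$; the copy at $a$ disappears along with the connecting edge, leaving $\De(u'\psi)$. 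Here the hypothesis that the thorn is \emph{non-essential} is exactly what guarantees that the folded-away copy $\Ls(a,\psi)$ contains none of the distinguished vertices of $\De(u\psi)$, so that the distinguished data of the ambient graph is preserved.

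The point to be checked with care is that both reductions are genuinely \emph{local}: one must verify that the edge-folding sequences furnished by Lemmas \ref{idempotency} and \ref{foldingsubstitution} involve only the vertices of the duplicated copies together with their single shared neighbour, and never the external edges attaching this region to the rest of $\De(u\psi)$. This is immediate from the proofs of those two lemmas, where every identification runs along geodesics internal to $\ga$ and $\ga'$. Granting this locality, $\De(u\psi)$ reduces to $\De(u'\psi)$, and passing to reduced forms yields $\Te(u\psi)=\Te(u'\psi)$, as required.
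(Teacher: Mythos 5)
Your proposal is correct and follows essentially the same route as the paper's own proof: in both, the thorn-deletion case is handled by recognizing the blown-up thorn as a copy of $\De(w)\sqcap\De(w)$ and invoking the folding sequence from Lemma \ref{idempotency}, while the edge-folding case is handled by Lemma \ref{foldingsubstitution} applied to the two duplicated copies attached to a common vertex, after which passing to reduced forms gives $\Te(u\psi)=\Te(u'\psi)$. The only differences are cosmetic --- you treat the dual configurations (the paper deletes a right-vertex thorn and folds two right vertices onto a common left vertex) and you make the locality of the foldings explicit, a point the paper leaves implicit.
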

\begin{proof} (i) Suppose that $\De(u')$ is obtained by deletion of a thorn $\{e,b\}$ in $\De(u)$ with edge $e=(a,b)$ (the dual case is analogous). According to the description of $\De(v\psi)$ for arbitrary $v\in F_2(X)$, $\De(u\psi)$ has a subgraph consisting of the disjoint union $\Ls(a,\psi)\cup\Rs(b,\psi)$ together with the edge connecting the distinguished left vertex of $\Ls(a,\psi)$ and the distinguished right vertex of $\Rs(b,\psi)$. Recall that  $\Ls(a,\psi)={}^L\De(x\psi)$ while $\Rs(b,\psi)=\De(x\psi)^R$ for
 $x=\cb_a=\cb_b$.   The proof of Lemma \ref{idempotency} shows that a sequence of edge-foldings reduces this subgraph of $\De(u\psi)$ to $\Ls(a,\psi)(={}^L\De(x\psi))$. The entire graph $\De(u\psi)$ has thus been reduced to $\De(u'\psi)$. Consequently, $\ol{\De(u\psi)}=\ol{\De(u'\psi)}$.

(ii) Suppose that $\De(u')$ is obtained from $\De(u)$ by an edge-folding of the form
$$\tikz[scale=0.5]{\coordinate [label=below:$x$] (1) at (0,0.5);\coordinate [label=above:$y$] (2) at (2,1);
\coordinate[label=below:$y$] (3) at (2,0); \coordinate [label=below:$x$] (4) at (4,0.5); \coordinate [label=below:$y$] (5) at (6,0.5); \draw (2,1) -- (0,0.5) --(2,0); \draw (0,0.5) node{$\bullet$}; \draw (2,1) node{$\bullet$}; \draw (2,0) node{$\bullet$};\draw (3,0.5) node{$\mapsto$};\draw (4,0.5)--(6,0.5);\draw (4,0.5) node{$\bullet$}; \draw(6,0.5)node{$\bullet$};\filldraw (6.5,0.5)circle (0.5pt);}$$
Denote the left vertex of \tikz[baseline=0pt,scale=0.3]{\coordinate [label=left:$x$] (1) at (0,0.5);\coordinate [label=right:$y$] (2) at (2,1);
\coordinate[label=right:$y$] (3) at (2,0);  \draw (2,1) -- (0,0.5) --(2,0); \draw (0,0.5) node{$\bullet$}; \draw (2,1) node{$\bullet$}; \draw (2,0) node{$\bullet$};} by $a$ and the two right vertices by $b$ and $c$, respectively.
Consider the subgraph of $\De(u\psi)$ obtained by replacing in \tikz[baseline=0pt,scale=0.3]{\coordinate [label=left:$x$] (1) at (0,0.5);\coordinate [label=right:$y$] (2) at (2,1);
\coordinate[label=right:$y$] (3) at (2,0);  \draw (2,1) -- (0,0.5) --(2,0); \draw (0,0.5) node{$\bullet$}; \draw (2,1) node{$\bullet$}; \draw (2,0) node{$\bullet$};}
 the left vertex with $\Ls(a,\psi)(={}^L\De(x\psi))$ and the two right vertices $b$ and $c$ with $\Rs(b,\psi)$ and $\Rs(c,\psi)$ both of which are (disjoint) copies of  $\De(y\psi)^R$. Lemma \ref{foldingsubstitution} then shows that through a sequence of edge foldings we get the graph obtained from \tikz[baseline=0pt,scale=0.3]{\coordinate [label=below:$x$] (4) at (0,0.5); \coordinate [label=below:$y$] (5) at (2.5,0.5);\draw (4) --(5);\draw (4) node{$\bullet$};\draw (5) node{$\bullet$};} by substitution of the left vertex with $\Ls(a,\psi)={}^L\De(x\psi)$ and the right vertex with $\Rs(b,\psi)=\De(y\psi)^R$. But the latter is a subgraph of $\De(u'\psi)$. Altogether we have reduced by a sequence of edge-foldings $\De(u\psi)$ to $\De(u'\psi)$. It follows that $\ol{\De(u\psi)}=\ol{\De(u'\psi)}$.
\end{proof}
\begin{Cor} The binary algebra $(\A(X),\wedge)$ is a relatively free pseudosemilattice on $X$.
\end{Cor}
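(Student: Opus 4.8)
The plan is to prove two things, which together amount to the assertion that $\A(X)$ is a relatively free pseudosemilattice on $X$: first, that $\ker\Te$ is a fully invariant congruence, so that $\A(X)$ is relatively free (in the variety it generates); and second, that $\A(X)$ actually lies in $\mathbf{PS}$. The first part is a corollary of \ref{deltafullyinvariant} and \ref{thetafullyinvariant}, and carries the real content; the second then reduces to a finite check because $\A(X)$ is relatively free.

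For full invariance, suppose $\Te(u)=\Te(v)$, that is $\ol{\De(u)}=\ol{\De(v)}$, and fix an endomorphism $\psi$; I must show $\Te(u\psi)=\Te(v\psi)$. The key point is that the reduction of $\De(u)$ to $\ol{\De(u)}$ runs through a finite chain
\[\De(u)=\delta_0\to\delta_1\to\cdots\to\delta_k=\ol{\De(u)}\]
of single thorn deletions and edge-foldings, and that, since $\De$ maps $F_2(X)$ onto $\B(X)$, each intermediate graph is realised as $\delta_i=\De(w_i)$ for some $w_i\in F_2(X)$, with $w_0=u$. Consecutive $\De(w_i),\De(w_{i+1})$ then differ by one thorn deletion or edge-folding, so Corollary \ref{thetafullyinvariant} (and its evident duals, which cover the remaining orientations of the two reduction rules) gives $\Te(w_i\psi)=\Te(w_{i+1}\psi)$; chaining these yields $\Te(u\psi)=\Te(\wh u\psi)$, where $\wh u:=w_k$ satisfies $\De(\wh u)=\ol{\De(u)}$. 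Choosing $\wh v$ analogously for $v$, the hypothesis $\ol{\De(u)}=\ol{\De(v)}$ becomes $\De(\wh u)=\De(\wh v)$, so Corollary \ref{deltafullyinvariant} gives $\De(\wh u\psi)=\De(\wh v\psi)$ and hence $\Te(\wh u\psi)=\Te(\wh v\psi)$. Combining, $\Te(u\psi)=\Te(\wh u\psi)=\Te(\wh v\psi)=\Te(v\psi)$. The step I expect to be the crux is exactly the realisability of every intermediate reduction graph as some $\De(w_i)$: this is what licenses the step-by-step use of Corollary \ref{thetafullyinvariant}, and it rests on the surjectivity of $\De$ together with the observation that thorn deletions and edge-foldings never remove the distinguished edge, hence keep one inside $\B'(X)\subseteq\B(X)=\ran\De$.

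Once $\ker\Te$ is known to be fully invariant, $\A(X)$ is relatively free, and a standard fact then applies: $\A(X)$ satisfies an identity $s\approx t$ if and only if $\Te(s)=\Te(t)$. Indeed, every value of $s$ in $\A(X)$ has the form $\Te(s\psi)$ for a suitable endomorphism $\psi$ (each generator image is $\Te$ of a word), and full invariance turns $\Te(s)=\Te(t)$ into $\Te(s\psi)=\Te(t\psi)$. Thus it remains only to verify the five defining identities of $\mathbf{PS}$ on the generators, i.e.\ for pairwise distinct letters. Identity (PS1) is already Lemma \ref{idempotency} (which gives $\ga\wedge\ga=\ga$ for all $\ga\in\A(X)$, not merely on generators). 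For (PS2), (PS2'), (PS3), (PS3') I would compute the reduced graphs $\Te(\cdot)$ of both sides directly from the definition of $\sqcap$ and the two reduction rules and check that they coincide; for (PS2), for example, both $(x\wedge y)\wedge(x\wedge z)$ and $(x\wedge y)\wedge z$ reduce to the single left vertex labelled $x$ joined to the right vertices labelled $y$ and $z$, the folding of the two $x$-labelled ends at the $z$-vertex being precisely what collapses the left-hand side onto the right.

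These graph computations are finite and routine, the only mildly laborious ones being (PS3) and (PS3'); conceptually they present no obstacle beyond bookkeeping, so the substance of the argument is the full-invariance chain above. Assembling the two parts, $\A(X)$ is an $X$-generated member of $\mathbf{PS}$ whose defining homomorphism $\Te$ has fully invariant kernel, which is exactly the statement that $\A(X)$ is a relatively free pseudosemilattice on $X$.
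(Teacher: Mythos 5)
Your proposal is correct and follows essentially the same route as the paper: full invariance of $\ker\Te$ is established by chaining Corollary \ref{deltafullyinvariant} and Corollary \ref{thetafullyinvariant} along a sequence of words realizing the single-step reductions of $\De(u)$ and $\De(v)$ down to their common reduced form, after which membership in $\mathbf{PS}$ reduces to a finite graph check of the defining identities as relations on the generators. The only addition is that you make explicit the realizability of each intermediate graph as $\De(w_i)$ (via surjectivity of $\De$ and the fact that the reductions never delete the distinguished edge), a point the paper's existence claim for the chain $u_0,\dots,u_n,v_0,\dots,v_m$ leaves implicit.
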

\begin{proof} Let $\psi:F_2(X)\to F_2(X)$ be an endomorphism and let $u,v\in F_2(X)$ be such that $\Te(u)=\Te(v)$. Then there exist $u_0,\dots,u_n,v_0,\dots,v_m\in F_2(X)$ such that
\begin{itemize}
\item $\De(u)=\De(u_0)$,
\item for each $i=0,\dots, n-1$ the graph $\De(u_{i+1})$ is obtained from $\De(u_i)$ by the deletion of a thorn or an edge-folding,
\item $\De(u_n)=\ol{\De(u)}=\Te(u)=\Te(v)=\ol{\De(v)}=\De(v_m)$,
\item for each $j=m,\dots,1$ the graph $\De(v_j)$ is obtained from $\De(v_{j-1})$ by the deletion of a thorn or an edge folding,
\item $\De(v_0)=\De(v)$.
\end{itemize}
Lemma \ref{deltafullyinvariant} and Corollary \ref{thetafullyinvariant} then imply
$$\Te(u\psi)=\ol{\De(u_0\psi)}=\dots=\ol{\De(u_n\psi)}=\ol{\De(v_m\psi)}=\dots=\ol{\De(v_0\psi)}= \Te(v\psi).$$ It follows that $(\A(X),\wedge)$ is a relatively free binary algebra on $X$.

In order to show that $(\A(X),\wedge)$ is a pseudosemilattice it suffices now to show that the defining identities ((PS1)--(PS3), (PS2'), (PS3'))
for pseudosemilattices are \textbf{relations} satisfied by the free generators of $\A(X)$. However, the latter can be verified by a straightforward check.
\end{proof}

\subsection{The word problem for free pseudosemilattices} On each pseudosemilattice, two equivalence relations $\Rc$ and $\Lc$ are defined by $$e\Rc f\Leftrightarrow (e]_{\Rc}=(f]_{\Rc}\mbox{ and }e\Lc f\Leftrightarrow (e]_{\Lc}=(f]_{\Lc}$$ (these are the equivalence relations induced by the quasiorders ${\le}_{\Rc}$ and ${\le}_{\Lc}$). From the definition of the operation $\wedge$ given in the introduction it follows that for arbitrary elements $e,e',f,f'$ of any pseudosemilattice the implication
$$e\Rc e'\ \&\ f\Lc f'\Rightarrow e\wedge f=e'\wedge f'$$
holds.

Next we  derive some identities satisfied in each pseudosemilattice. These results are known from the second author's paper \cite{l1}. Here we present a proof that does not depend on locally inverse semigroups so that this part is self-contained. We use some notation from \cite{l1}: for $u_1,\dots, u_n\in F_2(X)$ set
$$(\wedge u_1\dots u_n):= (\dots((u_1\wedge u_2)\wedge u_3)\wedge\dots )\wedge u_n$$
and
$$(u_n\dots u_1\wedge):=u_n\wedge(\dots\wedge(u_3\wedge(u_2\wedge u_1))\dots).$$
From the identities (PS1), (PS2) and (PS2') the identities
\begin{equation}\label{basicid}
(x\wedge x)\wedge(x\wedge y)\approx x\wedge y\approx (x\wedge y)\wedge (y\wedge y)
\end{equation}
are easily derived. Moreover, identity (PS3) implies that in each pseudosemilattice $E$, for each $x\in E$, all elements of the form $x\wedge y$ ($y\in E$) generate a semigroup. It follows that each expression of the form
$$(x\wedge y_1)\wedge(x\wedge y_2)\wedge \dots \wedge (x\wedge y_n),$$
without setting any further brackets, defines a uniquely determined element in any pseudosemilattice $E$, once the variables $x,y_1\dots,y_n$ are substituted with elements of $E$. It is therefore justified to use such expressions as terms when dealing with identities of pseudosemilattices. By identity (PS3'), the same holds for expressions of the form
$$(y_1\wedge x)\wedge (y_2\wedge x)\wedge\dots\wedge(y_n\wedge x).$$
\begin{Lemma}\label{idforPS} For each $n\in \mathbb{N}$, each pseudosemilattice satisfies the identity
$$(\wedge xy_1\dots y_n)\approx (x\wedge y_1)\wedge(x\wedge y_2)\wedge \dots\wedge(x\wedge y_n)$$ and its dual.
\end{Lemma}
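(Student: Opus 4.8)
The plan is to argue by induction on $n$, with the case $n=1$ trivial (both sides equal $x\wedge y_1$) and all the work concentrated in the inductive step. Throughout I write $S_x$ for the subsemigroup generated by $\{x\wedge y\mid y\in E\}$; by the consequence of (PS3) recalled above this set genuinely generates a semigroup, so products of elements of the form $x\wedge y$ are associative and the right-hand side $(x\wedge y_1)\wedge\cdots\wedge(x\wedge y_n)$ is an unambiguously defined element $P\in S_x$.

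Two preliminary observations do all the work. First, I would show that $x\,(=x\wedge x)$ is a left identity of $S_x$. On the generators this is immediate from \eqref{basicid}, which gives $(x\wedge x)\wedge(x\wedge y)=x\wedge y$; since $x\wedge x\in S_x$ and the multiplication of $S_x$ is associative, a map which is a left identity on a generating set is a left identity on the whole semigroup, so $x\wedge p=p$ for every $p\in S_x$. Second (the insertion step), I would show that $p\wedge z=p\wedge(x\wedge z)$ for every $p\in S_x$ and every $z$: using $p=x\wedge p$ from the first observation and then applying (PS2) with $p$ substituted for $y$,
\[p\wedge z=(x\wedge p)\wedge z=(x\wedge p)\wedge(x\wedge z)=p\wedge(x\wedge z).\]

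With these in hand the induction is routine. Assuming $(\wedge xy_1\dots y_n)=P:=(x\wedge y_1)\wedge\cdots\wedge(x\wedge y_n)$, I would compute
\[(\wedge xy_1\dots y_{n+1})=(\wedge xy_1\dots y_n)\wedge y_{n+1}=P\wedge y_{n+1}=P\wedge(x\wedge y_{n+1}),\]
the last equality being the insertion step applied to $P\in S_x$; and $P\wedge(x\wedge y_{n+1})$ is exactly the claimed right-hand side for $n+1$. The dual identity follows by the left--right symmetric argument, working in the subsemigroup generated by the elements $y\wedge x$, on which $x$ is a right identity, and invoking (PS2$'$) in place of (PS2) together with the dual of \eqref{basicid}.

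I expect the only genuinely delicate point to be the first observation — promoting the left-identity property from the generators $x\wedge y$ to all of $S_x$ — which is precisely where associativity (i.e.\ (PS3)) is essential: without it the products on the right-hand side would not even be well defined, and the regrouping $x\wedge(g_1\wedge\cdots\wedge g_k)=(x\wedge g_1)\wedge g_2\wedge\cdots\wedge g_k$ would be unavailable. Everything after that reduces to a single application of (PS2) per induction step.
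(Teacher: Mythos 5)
Your proof is correct and is essentially the paper's own argument: the paper's inductive step likewise prepends $x\wedge x$ to the product $(x\wedge y_1)\wedge\dots\wedge(x\wedge y_n)$ (your left-identity observation, resting on (\ref{basicid}) and the (PS3)-associativity of products of elements $x\wedge y$) and then applies (PS2) to turn $\wedge\, y_{n+1}$ into $\wedge\,(x\wedge y_{n+1})$, which is exactly your insertion step. The only difference is presentational: you isolate the two observations as lemmas about the subsemigroup $S_x$, while the paper performs the same manipulation inline as a chain of identities.
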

\begin{proof}
The claim is proved by  induction on $n$. There is nothing to prove for $n=1$ and for $n=2$ this is just the identity (PS2). Suppose that the claim be true for $n\ge 2$. Then
\begin{align*}
 (\wedge xy_1\dots y_{n+1})&=(\wedge xy_1\dots y_n)\wedge y_{n+1} \\
 &\approx [(x\wedge y_1)\wedge\dots\wedge(x\wedge y_n)]\wedge y_{n+1} \\
 &\approx [(x\wedge x)\wedge\{(x\wedge y_1)\wedge\dots\wedge(x\wedge y_n)\}]\wedge y_{n+1}\\
  &\approx [(x\wedge x)\wedge(x\wedge y_1)\wedge\dots\wedge(x\wedge y_n)]\wedge [(x\wedge x)\wedge y_{n+1}] \\
&\approx (x\wedge y_1)\wedge\dots\wedge(x\wedge y_n)\wedge (x\wedge y_{n+1})\, .
 \end{align*}
The dual identity is proved by symmetry.
 \end{proof}

 The next statement is an immediate consequence.
 \begin{Cor}\label{idforfolding} For all $n,k\in \mathbb{N}$, each pseudosemilattice satisfies the identities
 \begin{enumerate}
 \item $(\wedge xy_1\dots y_n)\wedge(\wedge xz_1\dots z_k)\approx (\wedge xy_1\dots y_nz_1\dots z_k)$,
 \item $(\wedge xy_1\dots y_nxz)\approx (\wedge xy_1\dots y_nz)$
 \end{enumerate}
 and their duals.
 \end{Cor}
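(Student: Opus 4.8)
The plan is to reduce both identities, via Lemma \ref{idforPS}, to routine manipulations inside the associative semigroup generated by the elements of the form $x\wedge w$. Fix a pseudosemilattice $E$ together with an assignment of its elements to the variables. As noted before Lemma \ref{idforPS}, identity (PS3) guarantees that, for the fixed $x$, all elements of the form $x\wedge w$ (with $w$ ranging over $E$) generate a subsemigroup of $E$ in which $\wedge$ is associative; in particular the bracketing in any product of such elements is irrelevant, and adjacent factors may be combined freely. Every term occurring in the corollary will be rewritten by Lemma \ref{idforPS} as a product of factors each of the shape $x\wedge w$, which legitimizes all subsequent rebracketing.

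For identity (1), I would first apply Lemma \ref{idforPS} to each of the two factors on the left, obtaining
$$(\wedge xy_1\dots y_n)\wedge(\wedge xz_1\dots z_k)\approx\big[(x\wedge y_1)\wedge\dots\wedge(x\wedge y_n)\big]\wedge\big[(x\wedge z_1)\wedge\dots\wedge(x\wedge z_k)\big].$$
Since every factor here has the form $x\wedge w$, associativity lets me erase the brackets separating the two blocks, producing the single product $(x\wedge y_1)\wedge\dots\wedge(x\wedge y_n)\wedge(x\wedge z_1)\wedge\dots\wedge(x\wedge z_k)$. A final application of Lemma \ref{idforPS}, read in the reverse direction for the word $(\wedge xy_1\dots y_nz_1\dots z_k)$, identifies this with the right-hand side.

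For identity (2), Lemma \ref{idforPS} turns the left-hand side into $(x\wedge y_1)\wedge\dots\wedge(x\wedge y_n)\wedge(x\wedge x)\wedge(x\wedge z)$. The key observation is that the two trailing factors satisfy $(x\wedge x)\wedge(x\wedge z)\approx x\wedge z$ by the first equality in (\ref{basicid}); since all factors lie in the same associative semigroup, I may replace the adjacent pair $(x\wedge x)\wedge(x\wedge z)$ by $x\wedge z$ without disturbing the remaining factors. This gives $(x\wedge y_1)\wedge\dots\wedge(x\wedge y_n)\wedge(x\wedge z)$, which is precisely the image of $(\wedge xy_1\dots y_nz)$ under Lemma \ref{idforPS}. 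The dual identities follow verbatim from the dual of Lemma \ref{idforPS} together with the second equality in (\ref{basicid}).

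I do not expect a genuine obstacle, since the statement is flagged as an immediate consequence; the only point demanding care is the justification that all factors genuinely belong to one common associative semigroup, namely the one generated by $\{x\wedge w\}$ for the fixed $x$, so that the rebracketing used in (1) and the local substitution used in (2) are valid. This is exactly what (PS3) supplies, and it is what makes the \emph{unbracketed} expressions of Lemma \ref{idforPS} well defined in the first place.
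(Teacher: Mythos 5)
Your proposal is correct and follows exactly the route the paper intends: the paper gives no separate proof, stating the corollary as ``an immediate consequence'' of Lemma~\ref{idforPS}, and your argument---rewriting both sides via Lemma~\ref{idforPS}, rebracketing freely inside the associative semigroup of elements $x\wedge w$ supplied by (PS3), and absorbing the factor $(x\wedge x)$ via the first equality of (\ref{basicid})---is precisely the intended filling-in of that remark, including the correct handling of the duals.
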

 A final auxiliary result that is crucial is the following.

 \begin{Lemma}\label{idforR} Each pseudosemilattice satisfies $(\wedge xyzw)\approx (\wedge xzyw)$ and $(\wedge xyzy)\approx (\wedge xzy)$ and their duals.
 \end{Lemma}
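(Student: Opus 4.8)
The plan is to derive both identities from the tools already assembled, primarily Lemma \ref{idforPS} and Corollary \ref{idforfolding}, working entirely within the equational theory of pseudosemilattices. The two asserted identities are
$$(\wedge xyzw)\approx(\wedge xzyw)\qquad\text{and}\qquad (\wedge xyzy)\approx(\wedge xzy),$$
so the heart of the matter is a commutativity phenomenon for the ``multiplication'' of elements of the form $x\wedge y$: once the leading letter $x$ is fixed, the elements $x\wedge y$, $x\wedge z$ commute and absorb repetitions. By Lemma \ref{idforPS} we may rewrite $(\wedge xyzw)$ as the bracket-free product $(x\wedge y)\wedge(x\wedge z)\wedge(x\wedge w)$, and likewise $(\wedge xzyw)\approx(x\wedge z)\wedge(x\wedge y)\wedge(x\wedge w)$. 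Thus both identities reduce to statements about the semigroup generated by $\{x\wedge y\mid y\in E\}$, which by the remark following \eqref{basicid} is a genuine semigroup inside any pseudosemilattice $E$.

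First I would establish the commutation $(x\wedge y)\wedge(x\wedge z)\approx(x\wedge z)\wedge(x\wedge y)$. My approach is to show that this product is in fact idempotent-like and determined by $\Rc$ and $\Lc$ classes. Using (PS2) we have $(x\wedge y)\wedge(x\wedge z)\approx(x\wedge y)\wedge z$, and its dual gives $(x\wedge z)\wedge(x\wedge y)\approx(x\wedge z)\wedge y$; the task is to prove these two agree. I would exploit that $x\wedge y\mathrel{\Rc}x\wedge z$ fails in general, so the cleaner route is via the implication stated just before the subsection, namely $e\mathrel{\Rc}e' \,\&\, f\mathrel{\Lc}f'\Rightarrow e\wedge f=e'\wedge f'$. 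Since $x\wedge y\Rc x$ and $x\wedge z\Rc x$ both hold (each is $\le_{\Rc}$ below $x$ with matching $\Rc$-class, as $x\wedge(x\wedge y)=x\wedge y$ by \eqref{basicid}), I would argue that the product $(x\wedge y)\wedge(x\wedge z)$ depends only on the $\Lc$-class of its right factor and the $\Rc$-class of its left factor, forcing symmetry after reconciling through the common element $x$. The precise bookkeeping here is the step I expect to be the main obstacle: one must verify that $(x\wedge y)\wedge(x\wedge z)$ and $(x\wedge z)\wedge(x\wedge y)$ are $\Rc$-related to $x$ and $\Lc$-related to the same element, and this requires careful application of (PS3) to keep the products associative.

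Once commutativity is in hand, the first identity $(\wedge xyzw)\approx(\wedge xzyw)$ follows immediately: rewrite both sides via Lemma \ref{idforPS}, swap the middle two factors $(x\wedge y)$ and $(x\wedge z)$ using the commutation, and rewrite back. For the second identity $(\wedge xyzy)\approx(\wedge xzy)$, I would again pass to the bracket-free form, obtaining $(x\wedge y)\wedge(x\wedge z)\wedge(x\wedge y)$ on the left. Using commutativity to bring the two copies of $(x\wedge y)$ together yields $(x\wedge y)\wedge(x\wedge y)\wedge(x\wedge z)$, and idempotency of $x\wedge y$ (from Lemma \ref{idempotency}, or directly from (PS1) applied to the element $x\wedge y$) collapses the repeated factor, giving $(x\wedge y)\wedge(x\wedge z)$, which is $(\wedge xzy)$ after commuting once more. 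In fact part (2) of Corollary \ref{idforfolding}, which already handles repeated occurrences of the \emph{leading} letter, suggests the cleanest packaging may be to reduce the second identity to the first plus idempotency rather than invoking commutativity twice. The dual identities follow by the left-right symmetry of the axioms, so no separate argument is needed there.
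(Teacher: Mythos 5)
Your reduction of the second identity to the first one plus idempotency (i.e.\ $(\wedge xyzy)\approx(\wedge xzyy)\approx(\wedge xzy)$) is sound and is in fact exactly the paper's route. The fatal problem is your proof of the first identity: the commutation you set out to establish, $(x\wedge y)\wedge(x\wedge z)\approx(x\wedge z)\wedge(x\wedge y)$, is \emph{false} in $\mathbf{PS}$, so no amount of ``careful application of (PS3)'' can close that step. To see this elementarily: every right-zero semigroup is a pseudosemilattice (it is a normal band, with $e\wedge f=ef=f$), and in a right-zero semigroup every term evaluates to the value of its rightmost variable; the two sides of your commutation have rightmost letters $z$ and $y$, so they take different values once $y$ and $z$ are assigned distinct elements. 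The paper confirms this internally: by (PS2) your identity is equivalent to $(x\wedge y)\wedge z\approx(x\wedge z)\wedge y$, and Corollary \ref{RLinPS} asserts only that these two terms are $\Rc$-related, while Corollary \ref{comA} says two elements of $\A(X)$ commute if and only if their leftmost \emph{and} rightmost letters agree. Your supporting claim $x\wedge y\mathrel{\Rc}x$ is also wrong: (\ref{basicid}) gives only $x\wedge y\mathrel{{\le}_{\Rc}}x$, and the reverse inequality fails (e.g.\ ${}^l\Te(x\wedge y)\ne{}^l\Te(x)$). Finally, your derivation of the second identity makes the damage visible: before the last ``commute once more'' it passes through $(\wedge xyzy)\approx(\wedge xyz)$, which is false for the same rightmost-letter reason; combined with the (true) target it would yield $(\wedge xyz)\approx(\wedge xzy)$, an identity failing in every non-associative pseudosemilattice.

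What is true about the semigroup generated by $\{x\wedge y\mid y\in E\}$ is not commutativity but \emph{right normality}: it satisfies $a\wedge b\wedge c\approx b\wedge a\wedge c$ (cf.\ Lemma \ref{nband}) and not $a\wedge b\approx b\wedge a$. That is precisely why the lemma carries the trailing factor $(x\wedge w)$, and why its second identity ends in $y$: two factors may be interchanged only when a third factor stands to their right. Accordingly, the paper never swaps factors in isolation; it keeps $(x\wedge w)$ in place throughout and performs a purely equational computation: expand $(\wedge xyzw)$ by Lemma \ref{idforPS}, insert a duplicate of $(x\wedge y)$ using (PS2), contract $(x\wedge y)\wedge[(x\wedge z)\wedge y]$ to $x\wedge[(x\wedge z)\wedge y]$ using (PS2'), re-expand using (PS2) and (PS1), and cancel the leading $x\wedge x$ by (\ref{basicid}). (Alternatively one could invoke right normality of $(x]_{\Rc}$ together with Lemma \ref{idforPS}, at the cost of the self-containedness this subsection aims for.) Your overall architecture survives only if the false commutation is replaced by this restricted, right-multiplier-in-place swap, which is essentially the paper's proof.
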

 \begin{proof} We have
 \begin{align*}
 (\wedge xyzw)&\approx [(x\wedge y)\wedge(x\wedge z)]\wedge (x\wedge w) &\mbox{ by Lemma \ref{idforPS}}\\
 &\approx (x\wedge y)\wedge (x\wedge z)\wedge (x\wedge y) \wedge (x\wedge w) &\mbox{ by (PS2)}\\
 &\approx \{(x\wedge \underline y)\wedge[(x\wedge z)\wedge \underline y]\}\wedge (x\wedge w) &\mbox{ by (PS2)}\\
 &\approx \{x\wedge [(x\wedge z)\wedge y]\}\wedge (x\wedge w) &\mbox{ by (PS2')}\\
 &\approx \{x\wedge [(x\wedge z)\wedge (x\wedge y)]\}\wedge (x\wedge w) &\mbox{ by (PS2)}\\
 &\approx (x\wedge x)\wedge (x\wedge z)\wedge (x\wedge y)\wedge (x\wedge w) &\mbox{ by (PS1)}\\
 &\approx (x\wedge z)\wedge (x\wedge y)\wedge (x\wedge w) &\mbox{ by (\ref{basicid})}\\
 &\approx (\wedge xzyw). &\mbox{ by Lemma \ref{idforPS}}
 \end{align*}
In particular $(\wedge xyzy)\approx (\wedge xzyy)\approx (\wedge xzy)$ where the last identity follows from Lemma \ref{idforPS} and (PS1). The duals are again proved by symmetry.
 \end{proof}

Lemma \ref{idforR} implies that for arbitrary elements $x,y,z$ of a pseudosemilattice $E$, setting $e=(x\wedge y)\wedge z$ and $f=(x\wedge z)\wedge y$ the equalities $e\wedge f=f$ and $f\wedge e=e$ hold, so that $(x\wedge y)\wedge z\Rc (x\wedge z)\wedge y$. By symmetry we also get $y\wedge (z\wedge x)\Lc z\wedge(y\wedge x).$

\begin{Cor}\label{RLinPS} $$\mathbf{PS}\models (x\wedge y)\wedge z\Rc (x\wedge z)\wedge y\mbox{ and }
\mathbf{PS}\models y\wedge(z\wedge x)\Lc z\wedge (y\wedge x).$$
\end{Cor}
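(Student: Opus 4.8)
The plan is to unwind the definition of $\Rc$ into a pair of equalities of $\wedge$-products and then verify these with the identities already at hand. Since $\le_{\Rc}$ is a quasi-order on any pseudosemilattice $E$ and $\Rc$ is the equivalence it induces, one has $e\Rc f$ if and only if $e\wr f$ and $f\wr e$; by the translation $e\wr f\Leftrightarrow f\wedge e=e$ recorded in the introduction this means $e\Rc f\Leftrightarrow f\wedge e=e$ and $e\wedge f=f$. Putting $e=(x\wedge y)\wedge z=(\wedge xyz)$ and $f=(x\wedge z)\wedge y=(\wedge xzy)$, it therefore suffices to prove the two relations $e\wedge f=f$ and $f\wedge e=e$ in $\mathbf{PS}$.

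For the first, I would use Corollary~\ref{idforfolding}(1) to merge the two products with common head $x$, obtaining $e\wedge f=(\wedge xyz)\wedge(\wedge xzy)=(\wedge xyzzy)$. The consecutive repetition $zz$ can then be removed: by Lemma~\ref{idforPS} this term equals $(x\wedge y)\wedge(x\wedge z)\wedge(x\wedge z)\wedge(x\wedge y)$, and $(x\wedge z)\wedge(x\wedge z)=x\wedge z$ by (PS1), so after re-applying Lemma~\ref{idforPS} we reach $(\wedge xyzy)$. Finally the reordering identity $(\wedge xyzy)\approx(\wedge xzy)$ of Lemma~\ref{idforR} gives $e\wedge f=(\wedge xzy)=f$. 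The second equality is entirely symmetric: $f\wedge e=(\wedge xzy)\wedge(\wedge xyz)=(\wedge xzyyz)=(\wedge xzyz)$ after collapsing $yy$, and the instance of $(\wedge xyzy)\approx(\wedge xzy)$ obtained by interchanging $y$ and $z$ yields $(\wedge xzyz)\approx(\wedge xyz)=e$. Hence $e\Rc f$, which is the first assertion.

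The $\Lc$-statement is obtained by the left--right dual of this argument. One characterises $\Lc$ by $e\Lc f\Leftrightarrow e\wedge f=e$ and $f\wedge e=f$ (using $e\wl f\Leftrightarrow e\wedge f=e$), sets $e=y\wedge(z\wedge x)$ and $f=z\wedge(y\wedge x)$, and runs the same three steps with the \emph{dual} forms of Corollary~\ref{idforfolding} and Lemma~\ref{idforR}.

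I do not expect any genuine obstacle here: the content is carried entirely by the reordering identity of Lemma~\ref{idforR}, already proved, together with the folding identities of Corollary~\ref{idforfolding}. The only point demanding a moment's care is the collapse of the repeated \emph{middle} letter ($zz$, respectively $yy$): Corollary~\ref{idforfolding}(2) removes a repeated \emph{head} $x$ but not an interior repetition, so this step must instead go through the expansion of Lemma~\ref{idforPS} followed by idempotency (PS1). Everything else is routine bookkeeping in the $(\wedge\,\cdots)$-notation.
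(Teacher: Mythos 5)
Your proof is correct and takes essentially the same route as the paper: the paper likewise reduces the $\Rc$-claim to the two equalities $e\wedge f=f$ and $f\wedge e=e$ for $e=(x\wedge y)\wedge z$, $f=(x\wedge z)\wedge y$, derives them from Lemma~\ref{idforR}, and obtains the $\Lc$-claim by symmetry. The only difference is that the paper leaves the intermediate computation implicit, whereas you spell it out (merging via Corollary~\ref{idforfolding}(1), collapsing the repeated letter via Lemma~\ref{idforPS} and (PS1), then reordering via Lemma~\ref{idforR}), which is exactly the intended derivation.
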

We are going to prove that for all $u,v\in F_2(X)$, $\Te(u)=\Te(v)$ implies $\mathbf{PS}\models u\approx v$. This immediately shows that $(\A(X),\wedge)$ is a model of the free $X$-generated pseudosemilattice. The proof is essentially divided into two parts the first of which is formulated as follows.
\begin{Prop}\label{DeaboveTe} For all $u,v\in F_2(X)$ the following implications hold.
\begin{enumerate}
\item ${}^L\De(u)={}^L\De(v) \Rightarrow \mathbf{PS}\models u\Rc v,$
\item $\De(u)^R= \De(v)^R \Rightarrow \mathbf{PS}\models u\Lc v,$
\item $\De(u)=\De(v) \Rightarrow \mathbf{PS}\models u\approx v.$
\end{enumerate}
\end{Prop}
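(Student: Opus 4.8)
The plan is to prove (1) and (2) together by a single induction, and then to read off (3). Since the whole construction is left--right symmetric, it suffices to carry out the argument for (1); statement (2) is its exact mirror image. The induction will be on the combined number of leaves of $\Ga(u)$ and $\Ga(v)$, and the inductive step for (1) will invoke the inductive hypothesis for (2) on strictly smaller words (and conversely). Throughout I shall use the fact, recorded just before Lemma~\ref{idforPS}, that every $u$ is \emph{literally} equal (as a term) to $(\wedge\, \l(u)\, t_1\dots t_m)$, where $t_1,\dots,t_m$ are the right-hand factors read off the left spine of $\Ga(u)$ (write $u=u_1\wedge u_2$, decompose $u_1$ the same way, and peel off $u_2=t_m$); each $t_i$ is a proper subword of $u$.

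The geometric heart of the matter is to match this decomposition against ${}^L\De(u)$. Expanding $\sqcap$ as $\De(\al\wedge\be)={}^L\De(\al)\cup\{(\lv_{\De(\al)},\rv_{\De(\be)})\}\cup\De(\be)^R$ shows that ${}^L\De(u_1\wedge u_2)$ is obtained from ${}^L\De(u_1)$ by attaching the right-rooted tree $\De(u_2)^R$ to the left root $\lv_{\De(u_1)}=\lv_{\De(u)}$ through a single new edge. Iterating down the left spine, and starting from ${}^L\De(x)=\underset{x}{\bullet}$, one checks by induction that ${}^L\De\big((\wedge\, x\, t_1\dots t_m)\big)$ is the left-rooted tree whose root $\lv$ carries the label $x=\l(u)$ and off which hang the pairwise disjoint right-rooted trees $\De(t_1)^R,\dots,\De(t_m)^R$, each attached at its own root. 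Consequently, deleting $\lv$ from ${}^L\De(u)$ and taking connected components recovers exactly the label $x$ of $\lv$ together with the \emph{multiset} $\{\De(t_i)^R\mid i=1,\dots,m\}$.

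With this dictionary the inductive step for (1) is short. If ${}^L\De(u)={}^L\De(v)$, then the two trees have the same root label, so $\l(u)=\l(v)=:x$, and the same multiset of hanging subtrees; hence, writing $u=(\wedge\, x\, t_1\dots t_m)$ and $v=(\wedge\, x\, t_1'\dots t_{m'}')$ as above, we get $m=m'$ and, after a permutation $\sigma$, an equality $\De(t_i)^R=\De(t_{\sigma(i)}')^R$ for every $i$. As the $t_i,t_j'$ are proper subwords, the inductive hypothesis for (2) gives $\mathbf{PS}\models t_i\Lc t_{\sigma(i)}'$, and therefore $\mathbf{PS}\models x\wedge t_i\approx x\wedge t_{\sigma(i)}'$, because $e\Rc e'$ and $f\Lc f'$ imply $e\wedge f=e'\wedge f'$ (taking $e=e'=x$). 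By Lemma~\ref{idforPS}, $u\approx(x\wedge t_1)\wedge\cdots\wedge(x\wedge t_m)$ and likewise for $v$, so $u$ and $v$ are $\mathbf{PS}$-equal to products of the same multiset of factors in possibly different orders; since Lemma~\ref{idforR} permutes interior factors and Corollary~\ref{RLinPS} permutes them modulo $\Rc$, we conclude $\mathbf{PS}\models u\Rc v$. The base case $u\in X$ is immediate: then ${}^L\De(u)$ is a single vertex, which forces $v=u$.

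Finally, (3) follows formally. If $\De(u)=\De(v)$, then forgetting one marked root at a time gives ${}^L\De(u)={}^L\De(v)$ and $\De(u)^R=\De(v)^R$, whence $\mathbf{PS}\models u\Rc v$ and $\mathbf{PS}\models u\Lc v$ by (1) and (2). Now in any pseudosemilattice, evaluating two $\Rc$- and $\Lc$-related elements $e,f$ one has $e\wedge f=f$ (from $e\Rc f$) and $e\wedge f=e$ (from $e\Lc f$), so $e=f$; hence $\mathbf{PS}\models u\approx v$. I expect the main obstacle to lie in the second paragraph, namely in pinning down rigorously that the top of ${}^L\De(u)$ decomposes as a root plus the multiset $\{\De(t_i)^R\}$: this is where one must keep careful track of the behaviour of the distinguished roots under $\sqcap$ and of the disjointness of the attached subtrees, everything else being a routine consequence of the identities already established.
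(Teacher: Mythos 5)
Your proof is correct, but it takes a genuinely different route from the paper's. The paper proves all three implications by a single simultaneous induction on $|V(\De(u))|$, and its key step only peels off the top-level decomposition $u=u_1\wedge u_2$, $v=v_1\wedge v_2$: it cuts the two edges joining the common left root to the two right roots, obtaining three subtrees $T_l,T_r,T_s$, realizes these by auxiliary words $w_l,w_r,w_s$, uses the induction hypothesis to get $\mathbf{PS}\models u\approx(w_l\wedge w_s)\wedge w_r$ and $\mathbf{PS}\models v\approx(w_l\wedge w_r)\wedge w_s$, and finishes with a \emph{single} application of the commutation $(x\wedge y)\wedge z\Rc(x\wedge z)\wedge y$ of Corollary~\ref{RLinPS}; in effect the whole multiset of subtrees hanging off the root, except the two containing $\rv$ and $\sv$, is absorbed into the one word $w_l$, so each induction step handles exactly one transposition. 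You instead unfold the entire left spine $u=(\wedge\,x\,t_1\dots t_m)$, prove the structural lemma that ${}^L\De(u)$ is the root labelled $x$ with the disjoint right-rooted trees $\De(t_i)^R$ attached, match the two multisets, and then dispose of an arbitrary permutation of the factors $x\wedge t_i$ using Lemmas~\ref{idforPS} and~\ref{idforR} together with Corollary~\ref{RLinPS}. Both arguments rest on exactly the same identity toolbox, but the trade-off is real: the paper's surgery avoids having to describe ${}^L\De(u)$ globally and keeps each step minimal, while your version makes explicit what ${}^L\De(u)$ actually encodes (root label plus multiset of hanging right-rooted subtrees --- essentially a canonical form), and it also streamlines the logic by removing item (3) from the induction entirely, deducing it afterwards from $\Rc\cap\Lc$ being the identity relation, whereas the paper runs (3) through the induction as well. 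The spine-decomposition lemma you flag as the delicate point is indeed the only extra ingredient you need beyond the paper, but your sketch of it (iterating the definition of $\sqcap$ down the left spine, with disjointness automatic because no identifications occur in $\B(X)$) is sound and routine to complete.
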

\begin{proof} The proof is by simultaneous induction on the number $\vert V(\De(u))\vert$ of vertices of $\De(u)$.
All three implications are true (by Corollary \ref{RLinPS}) for any $u,v$ with $|V(\De(u))|=|V(\De(v))|\le 3$. So, let $u,v\in F_2(X)$ with $|V(\De(u))|=|V(\De(v))|\ge 4$ and assume first that ${}^L\De(u)={}^L\De(v)$ but $\De(u)\ne\De(v)$. Let $u=u_1\wedge u_2$ and $v=v_1\wedge v_2$. By definition,
$$\De(u)={}^L\De(u_1)\cup\{(\lv,\rv)\}\cup \De(u_2)^R$$
and
$$\De(v)={}^L\De(v_1)\cup\{(\lv,\sv)\}\cup \De(v_2)^R$$
where $\lv=\lv_{\De(u)}=\lv_{\De(v)}$ and $\rv=\rv_{\De(u)}\ne\rv_{\De(v)}=\sv$. If we delete the two edges $(\lv,\rv)$ and $(\lv,\sv)$ from the graph then it is decomposed into three pairwise disjoint trees $T_l,T_r,T_s$ (namely the connected components containing $\lv,\rv,\sv$, respectively). We consider $T_l$ as left-rooted tree with left root $\lv$ and $T_r$ and $T_s$ as right-rooted trees
with right roots $\rv$ and $\sv$, respectively. Then we have
$${}^L\De(u_1)=T_l\cup\{(\lv,\sv)\}\cup T_s\mbox{ and }\De(u_2)^R=T_r$$
while
$${}^L\De(v_1)=T_l\cup\{(\lv,\rv)\}\cup T_r\mbox{ and }\De(v_2)^R=T_s$$
(the distinguished left root in both cases being $\lv$). Let now $w_l,w_r,w_s\in F_2(X)$ be such that
$${}^L\De(w_l)=T_l,\ \De(w_r)^R=T_r,\ \De(w_s)^R=T_s.$$ It follows from the construction that
$${}^L\De(u_1)={}^L\De(w_l\wedge w_s)\mbox{ and } {}^L\De(v_1)={}^L\De(w_l\wedge w_r).$$
By the induction hypothesis this implies that
$$\mathbf{PS}\models u_1\Rc w_l\wedge w_s,\  v_1\Rc w_l\wedge w_r.$$
Once more by construction we have
$$\De(u_2)^R=\De(w_r)^R\mbox{ and }\De(v_2)^R=\De(w_s)^R$$
which, again by the induction hypothesis implies that
$$\mathbf{PS}\models u_2\Lc w_r,\ v_2\Lc w_s.$$
Altogether,
$$\mathbf{PS}\models u_1\wedge u_2\approx (w_l\wedge w_s)\wedge w_r,\ v_1\wedge v_2\approx (w_l\wedge w_r)\wedge w_s.$$
From Corollary \ref{RLinPS} it follows that $\mathbf{PS}\models u_1\wedge u_2\Rc v_1\wedge v_2$, as required.

The implication
$$\De(u)^R=\De(v)^R\ \& \ \De(u)\ne \De(v)\Rightarrow \mathbf{PS}\models u\Lc v$$
is proved by symmetry.

Finally, assume that $\De(u)=\De(v)$ for $u=u_1\wedge u_2$ and $v=v_1\wedge v_2$. Then ${}^L\De(u_1)={}^L\De(v_1)$ and $\De(u_2)^R=\De(v_2)^R$. By induction hypothesis we have $\mathbf{PS}\models u_1\Rc v_1,\ u_2\Lc v_2$ which implies $\mathbf{PS}\models u_1\wedge u_2\approx v_1\wedge v_2$, as required.
\end{proof}
The second part consists of proving the following claim.
\begin{Prop} Let $u,u'\in F_2(X)$ be such that $\De(u')$ is obtained from $\De(u)$ by a single deletion of a thorn or a single edge-folding; then $\mathbf{PS}\models u\approx u'$.
\end{Prop}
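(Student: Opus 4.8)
The plan is to relate the two combinatorial reduction rules --- deletion of a thorn and edge-folding --- directly to the pseudosemilattice identities established in Lemmas~\ref{idforPS}--\ref{idforR} and their corollaries. The key technical device is the description of $\De(u)$ in terms of the quotient map $\chi_u:\Ga(u)\to\De(u)$, together with the observation (from Proposition~\ref{DeaboveTe}) that $\De$-equality, ${}^L\De$-equality and $\De(\cdot)^R$-equality translate into $\mathbf{PS}$-provable $\approx$, $\Rc$ and $\Lc$ relations respectively. So the strategy is: locate where in $u$ the thorn or folding occurs, rewrite $u$ modulo $\mathbf{PS}$ into a normal form in which the relevant subword is displayed as a bracketless product $(\wedge xy_1\cdots y_n)$ (or its dual), and then apply the appropriate clause of Corollary~\ref{idforfolding} or Lemma~\ref{idforR} to absorb the redundant factor.

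First I would treat the \emph{deletion of a thorn}. Suppose the thorn removed from $\De(u)$ has edge $e=(a,b)$ with $a,b$ carrying the same label $x$, and $b$ a degree-$1$ vertex. By the structure of $\De(u)$, the vertex $a$ is a left vertex adjacent to several right vertices, one of which, $b$, is the ``repeated-label leaf''. Using Proposition~\ref{DeaboveTe} I can choose a representative word so that $a$'s neighbourhood reads as a product $(\wedge x y_1\cdots y_m x)$, where the trailing $x$ corresponds to the thorn $b$ (whose label equals that of $a$). Corollary~\ref{idforfolding}(2), namely $(\wedge xy_1\dots y_n x z)\approx(\wedge xy_1\dots y_n z)$ --- or more directly the identity $(x\wedge x)\wedge(x\wedge y)\approx x\wedge y$ from (\ref{basicid}) in the boundary case --- then removes precisely this factor, yielding a word $u'$ with $\De(u')$ the thorn-deleted graph. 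The dual thorn is handled by the left-right dual identities.

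For the \emph{edge-folding} the argument is parallel but uses Lemma~\ref{idforR} and Corollary~\ref{idforfolding}(1). Here two edges $f_1,f_2$ sharing a vertex, whose opposite endpoints carry the same label $y$, are identified. After passing to a suitable representative via Proposition~\ref{DeaboveTe}, the shared left vertex $a$ (label $x$) sees two neighbours both labelled $y$, so locally the word contains $(\wedge x\cdots y\cdots y\cdots)$; the folding corresponds exactly to the collapse $(\wedge x y_1\dots y_n y)\wedge(\wedge x z_1\dots z_k y)$ into a single product with the two $y$-factors merged, which is provided by combining Corollary~\ref{idforfolding}(1) with the absorption $(\wedge\cdots yy)\approx(\wedge\cdots y)$ and the commuting rule $(\wedge xyzw)\approx(\wedge xzyw)$ of Lemma~\ref{idforR} to line the two $y$'s up adjacently. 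The $\Rc$/$\Lc$-congruence property --- that $e\Rc e'$ and $f\Lc f'$ force $e\wedge f=e'\wedge f'$ --- is what lets me rewrite the two subtrees hanging off the identified vertices independently before folding.

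The main obstacle I anticipate is \textbf{not} the identity manipulation itself but the bookkeeping needed to justify that the local rewriting near the thorn or the folded vertex can be carried out \emph{inside} $u$ without disturbing the rest of the graph. Concretely, the thorn or folding touches only a bounded subtree of $\De(u)$, and I must argue that the remainder of $\De(u)$ is untouched by the chosen rewriting, so that the global equality $\De(u')=$ (graph after the move) really follows from the local identity. This is exactly the role played by the substitution description of $\De(u\psi)$ and the skeleton $\sk(u,\psi)$ from the previous section: I would phrase the local subtree as the image $\chi_u(\Ga(s))={}^L\De(s)$ or $\De(s)^R$ of a subword $s$, apply the relevant $\mathbf{PS}$-identity to that subword alone, and invoke the fact that $\ker\De$ (hence $\mathbf{PS}$-provability applied factorwise) respects subword replacement. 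Once the locality is pinned down, the verification reduces to the finite list of identities already in hand.
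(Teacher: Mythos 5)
Your toolkit is the right one (Proposition~\ref{DeaboveTe}, Corollary~\ref{idforfolding}, the substitution description of $\De$), and your plan differs from the paper's in that you try to avoid the paper's induction on $\vert V(\De(u))\vert$ entirely; but as written there are two genuine gaps. The first is the step ``using Proposition~\ref{DeaboveTe} I can choose a representative word so that $a$'s neighbourhood reads as $(\wedge x y_1\cdots y_m x)$''. Proposition~\ref{DeaboveTe} only converts $\De$-equality into $\mathbf{PS}$-provability; it does not \emph{produce} such a representative, and producing one is the actual content of the proposition: a thorn or a foldable pair of edges is a feature of the graph $\De(u)$ that need not be aligned with the subword structure of any word, and realigning it requires a recursive re-rooting construction --- which is precisely what the paper's induction does, by pushing the configuration down until it meets the distinguished edge of a factor. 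Moreover your closing recipe ``apply the relevant $\mathbf{PS}$-identity to that subword alone'' is unsound: a thorn attached at the right root of the factor $u_1$ of $u=u_1\wedge u_2$ is \emph{essential} in $\De(u_1)$, and indeed $(\wedge x\,w\,x)\approx (\wedge x\,w)$ is false in $\mathbf{PS}$ (compare right zero semigroups); the deletion is valid only in the presence of trailing context, which is why Corollary~\ref{idforfolding}(2) carries its final letter $z$ and why the paper treats the subcase $\rv_{\De(u_1)}=a$ at the top level of $u$ rather than inside $u_1$.

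The second gap is specific to edge-folding. The two vertices $b,c$ being identified generally carry two \emph{different} subtrees whose roots merely share the label $y$. Your proposed merge --- line the two $y$-factors up adjacently via Lemma~\ref{idforR} and then absorb by $(\wedge\cdots yy)\approx(\wedge\cdots y)$ --- only merges two leaves or, after substituting a word for the letter $y$, two \emph{identical} copies of one branch; it cannot merge the subtree hanging off $b$ with the different subtree hanging off $c$. The identity that accomplishes this is the dual of Corollary~\ref{idforfolding}(1), namely the paper's key equation~(\ref{criticalequation}), $(t_k\dots t_1y\wedge)\wedge(s_l\dots s_1y\wedge)\approx(t_k\dots t_1s_l\dots s_1y\wedge)$, and it applies only when the two branches stand on the two sides of a single $\wedge$. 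To reach that position one must first move the right root of the left-hand factor onto the vertex $c$ (this is where Proposition~\ref{DeaboveTe}(1) together with $e\Rc e'\Rightarrow e\wedge f=e'\wedge f$ enters), and one must afterwards verify, via the description of $\De(u(s\to t))$, that the word so produced has exactly the folded graph as its $\De$-image. This is the structure of the paper's proof: the induction disposes of configurations interior to a factor, and the residual case --- folding at the distinguished edge --- is handled by $\Rc$-re-rooting plus (\ref{criticalequation}) plus the substitution description. Your approach can be repaired along these lines, but the repair essentially reconstructs that argument.
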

\begin{proof} For both cases we use induction on $|V(\De(u))|$ and we may assume that $|c(u)|\ge 2$.
Let us first assume that the reduction $\De(u)\to \De(u')$ is by deletion of a thorn. The induction base is easily checked directly by looking at the graphs on three vertices. Let $u\in F_2(X)$ be such that $\vert V(\De(u))\vert\ge 4$ and suppose that the claim be true for all $v$ with $|V(\De(v))|<|V(\De(u))|$. Now $u=u_1\wedge u_2$ and $\De(u)={}^L\De(u_1)\cup\{(\lv,\rv)\}\cup \De(u_2)^R$. Let $\{e,a\}$ be the thorn to be deleted to obtain $\De(u')$ from $\De(u)$. The edge $e$ cannot coincide with $(\lv,\rv)$ so it must belong to ${}^L\De(u_1)$ or $\De(u_2)^R$. Suppose the former is true (the latter case is handled completely analogously). We shall distinguish two cases: $\rv_{\De(u_1)}\ne a$ and $\rv_{\De(u_1)}= a$. In the first case, the deletion of $\{e,a\}$ does not remove the right distinguished vertex of $\De(u_1)$.  Let $\ga=\De(u_1)\setminus\{e,a\}$ be the  graph obtained from $\De(u_1)$ by deleting the thorn $\{e,a\}$ and let $u_1'\in F_2(X)$ be chosen with $\De(u_1')=\ga$. Then by induction hypothesis $\mathbf{PS}\models u_1\approx u_1'$, and therefore also $\mathbf{PS}\models u_1\wedge u_2\approx u_1'\wedge u_2.$ Now let $u'$ be any word such that $\De(u')=\De(u)\setminus\{e,a\}=\De(u_1'\wedge u_2)$. Then
 by Proposition \ref{DeaboveTe},
$\mathbf{PS}\models u_1'\wedge u_2\approx u'$ and therefore also $\mathbf{PS}\models u\approx u'$. In the second case, $u_1=u_1'\wedge x$ and $x=\l(u)=\l(u_1)$. Here it follows from Corollary \ref{idforfolding} that $\mathbf{PS}\models u=(u_1'\wedge x)\wedge u_2\approx u_1'\wedge u_2$. Finally, if $u'$ is any word with $\De(u')=\De(u)\setminus \{e,a\}$ then again by  Proposition \ref{DeaboveTe}
$\mathbf{PS}\models u_1'\wedge u_2\approx u'$ whence also $\mathbf{PS}\models u\approx u'$.

Let us now consider the case when the reduction $\De(u)\to \De(u')$ is by an edge-folding. The induction base is again directly checked by inspection of the graphs on three vertices. Let $u\in F_2(X)$ with $|V(\De(u))|\ge 4$ and suppose the claim be true for all $v\in F_2(X)$ for which $|V(\De(v))|<|V(\De(u))|$ and let $u=u_1\wedge u_2$. Assume the edge-folding is of the form (the dual case is treated analogously):
\begin{equation}\label{folding}
\tikz[baseline=7.5pt,scale=0.5]{\coordinate [label=below:$x$] (1) at (0,0.5);\coordinate [label=above:$y$] (2) at (2,1);
\coordinate[label=below:$y$] (3) at (2,0); \coordinate [label=below:$x$] (4) at (4,0.5); \coordinate [label=below:$y$] (5) at (6,0.5); \draw (2,1) -- (0,0.5) --(2,0); \draw (0,0.5) node{$\bullet$}; \draw (2,1) node{$\bullet$}; \draw (2,0) node{$\bullet$};\draw (3,0.5) node{$\mapsto$};\draw (4,0.5)--(6,0.5);\draw (4,0.5) node{$\bullet$}; \draw(6,0.5)node{$\bullet$};\filldraw (6.5,0.5)circle (0.5pt);}\end{equation}
Let $a$ denote the involved left vertex and $b,c$ the two involved right vertices. If neither of the pair $(a,b)$ and $(a,c)$ is the pair of distinguished vertices then both edges are contained  either in $\De(u_1)$ or $\De(u_2)$ and we may assume the former. Then the folding (\ref{folding}) reduces  $\De(u_1)$ to a graph $\gamma$. Let $u_1'\in F_2(X)$ with $\De(u_1')=\ga$. By induction hypothesis, $\mathbf{PS}\models u_1\approx u_1'$ and hence $\mathbf{PS}\models u_1\wedge u_2\approx u_1'\wedge u_2$. If $u'$ is any word such that $\De(u')$ is obtained from $\De(u)$ by the edge-folding mentioned above then $\De(u')=\De(u_1'\wedge u_2)$, whence $\mathbf{PS}\models u'\approx u_1'\wedge u_2$  and we are done. Hence we may assume that $(a,b)$ is the distinguished pair of vertices of $\De(u)$. Then the edge $(a,c)$ belongs to ${}^L\De(u_1)$ (and $a$ is the distinguished left vertex of ${}^L\De(u_1)$). Suppose that $c$ is not the distinguished right vertex of $\De(u_1)$. Let $\de$ be the (bi-rooted) graph obtained from $\De(u_1)$ by changing the right root to $c$. Then $\de=\De(v_1)$ for some $v_1$ for which $\mathbf{PS}\models u_1\Rc v_1$. Then $\mathbf{PS}\models u_1\wedge u_2\approx v_1\wedge u_2$ and we may continue with $v_1\wedge u_2$ instead of $u_1\wedge u_2$. Or, in other words, we may assume that $c$ is the distinguished right vertex of $\De(u_1)$. Since the label of $b$ as well as of $c$ is $y$ it follows that $\r(u_1)=\r(u_2)=y$. Using the notation introduced at the beginning of the present subsection we conclude
$$u_1=(t_kt_{k-1}\dots t_1y\wedge)\mbox{ and }u_2=(s_ls_{l-1}\dots s_1y\wedge)$$
for certain words $t_k,\dots,t_1,s_l,\dots,s_1\in F_2(X)$. According to Corollary \ref{idforfolding},
\begin{equation}\label{criticalequation}
\mathbf{PS}\models (t_k\dots t_1y\wedge)\wedge(s_l\dots s_1y\wedge)\approx(t_k\dots t_1s_l\dots s_1y\wedge) =:v.\end{equation}
Note that $v=(t_k\dots t_1y\wedge)(y\to (s_l\dots s_1y\wedge))$ (the $y$ to be substituted being the rightmost letter in $(t_k\dots t_1y\wedge)$);
by the description of $\De(w(s\to t))$ as it is given in section \ref{algebraB(X)} it follows that $\De(v)$ can be obtained by replacing in $\De(u_1)=\De((t_k\dots t_sy\wedge))$ the right distinguished vertex with the tree $\De(u_2)^R=\De((s_l\dots s_1y\wedge))^R$. However, the same graph is obtained by the edge-folding (\ref{folding}), that is, $\De(v)$ is obtained from $\De(u)$ by an edge-folding of the form (\ref{folding}). Finally, if $u'$ is any word such that $\De(u')$ is obtained from $\De(u)$ by the folding \ref{folding} then $\De(u')=\De(v)$ and so  $\mathbf{PS}\models u'\approx v$. The identity (\ref{criticalequation}) then implies $\mathbf{PS}\models u\approx u'$.
\end{proof}

Altogether we have proved the main result of the present section.
\begin{Thm} The binary algebra $(\A(X),\wedge)$ is a model of the free pseudosemilattice generated by $X$.
\end{Thm}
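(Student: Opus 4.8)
The plan is to show that the canonical homomorphism $\Te:F_2(X)\to \A(X)$ induces an isomorphism between the free pseudosemilattice $F_{\mathbf{PS}}(X)=F_2(X)/{\equiv_{\mathbf{PS}}}$ (where $u\equiv_{\mathbf{PS}}v$ means $\mathbf{PS}\models u\approx v$) and $(\A(X),\wedge)$. Since we have already established that $(\A(X),\wedge)$ is a relatively free pseudosemilattice on $X$ (the preceding corollary), there is a surjective homomorphism $\pi:F_{\mathbf{PS}}(X)\to \A(X)$ determined by the generators, and $\Te$ factors through $\pi$. The only thing left to verify is that $\pi$ is injective, i.e.\ that $\ker\Te$ is contained in $\equiv_{\mathbf{PS}}$. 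Concretely, I must prove the converse direction to the universal property: for all $u,v\in F_2(X)$, if $\Te(u)=\Te(v)$ then $\mathbf{PS}\models u\approx v$.

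First I would unwind what $\Te(u)=\Te(v)$ means combinatorially. By definition $\Te(w)=\ol{\De(w)}$, so $\Te(u)=\Te(v)$ says that $\De(u)$ and $\De(v)$ have the same reduced form. Because the reduced form is reached by a finite sequence of the two reduction rules (thorn deletion and edge-folding) and is independent of their order, there is a finite chain of graphs connecting $\De(u)$ to $\De(v)$: from $\De(u)$ one reduces down to the common reduced form $\ol{\De(u)}=\ol{\De(v)}$, and from there one runs the reductions that produce $\ol{\De(v)}$ from $\De(v)$ backwards. Each single step in this chain is either a thorn deletion, an edge-folding, or the reverse of one of these, applied to a graph of the form $\De(w)$ for some $w\in F_2(X)$ realizing it (each intermediate graph is indeed $\De$ of some word, since $\De$ is surjective onto $\B(X)$).

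Now I assemble the two propositions already proved. The second Proposition says precisely that a single thorn deletion or edge-folding step $\De(u)\to\De(u')$ forces $\mathbf{PS}\models u\approx u'$; since $\approx$ in $\mathbf{PS}$ is symmetric, the reverse steps are handled by the same proposition read backwards. The first Proposition (\ref{DeaboveTe}(3)) supplies the remaining ingredient: whenever two words $u,u'$ happen to satisfy $\De(u)=\De(u')$, already $\mathbf{PS}\models u\approx u'$, which lets me freely rechoose word-representatives of each intermediate graph without changing the $\mathbf{PS}$-class. Chaining these equalities along the finite sequence of reduction steps, and using transitivity of $\approx$ in $\mathbf{PS}$, yields $\mathbf{PS}\models u\approx v$. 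This is the content of the displayed chain used in the earlier corollary's proof, now applied to deduce the $\mathbf{PS}$-identity rather than the $\A(X)$-equality.

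Combining the two directions gives the word problem solution: $\mathbf{PS}\models u\approx v$ if and only if $\Te(u)=\Te(v)$. The forward implication is exactly what the two Propositions establish, and the backward implication holds because $(\A(X),\wedge)\in\mathbf{PS}$ (being relatively free among pseudosemilattices) so every identity of $\mathbf{PS}$ is satisfied by its generators, forcing $\Te$ to identify $\mathbf{PS}$-equivalent words. Hence $\ker\Te$ coincides with $\equiv_{\mathbf{PS}}$, the induced map $F_{\mathbf{PS}}(X)\to\A(X)$ is a bijective homomorphism, and therefore $(\A(X),\wedge)$ is the free pseudosemilattice on $X$. I expect the main conceptual obstacle to have been the two Propositions themselves (already dispatched); the present step is a bookkeeping argument, and its only subtlety is making sure every intermediate graph in the reduction chain is genuinely of the form $\De(w)$ so that the Propositions apply at each step --- this is guaranteed by surjectivity of $\De$ onto $\B(X)$.
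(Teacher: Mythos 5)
Your proposal is correct and follows essentially the same route as the paper: the paper likewise reduces the theorem to showing that $\Te(u)=\Te(v)$ implies $\mathbf{PS}\models u\approx v$, establishes this by chaining Proposition \ref{DeaboveTe}(3) and the proposition on single thorn-deletions/edge-foldings along the reduction sequence from $\De(u)$ and $\De(v)$ to their common reduced form, and treats the converse inclusion as automatic since $(\A(X),\wedge)$ is itself a pseudosemilattice generated by $X$. Your explicit remarks on the surjectivity of $\De$ (so each intermediate graph is realized by a word) and on the factorization through $F_{\mathbf{PS}}(X)$ merely spell out what the paper leaves implicit.
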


\subsection{The relations ${\le}_{\Rc}$, ${\le}_{\Lc}$, $\le$, $\Rc$, $\Lc$, and ${\Rc}\vee{\Lc}$ on  $\A(X)$}\label{greensrelations} It is a well known fact in semigroup theory that in any pseudosemilattice $E$, the subsets $(e]_{\Rc}$, $(e]_{\Lc}$ and $(e]_{\le}$ constitute respectively right normal bands (idempotent semigroups satisfying the identity $xyz\approx yxz$), left normal bands (idempotent semigroups satisfying the identity $xyz\approx xzy$) and semilattices. We leave these conclusions registered in the following lemma for future reference, but we would like to point out that they can be easily obtained also directly from the axioms for \pseudo s.

\begin{Lemma}\label{nband}
For each $e$ in a \pseudo\ $E$, the subsets $(e]_{\Rc}$, $(e]_{\Lc}$ and $(e]_{\le}$ are respectively a right normal band, a left normal band and a semilattice.
\end{Lemma}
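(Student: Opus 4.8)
The plan is to prove all three assertions by exhibiting, for each element $e$ of a pseudosemilattice $E$, concrete idempotent binary operations on the sets $(e]_{\Rc}$, $(e]_{\Lc}$ and $(e]_{\le}$ and verifying the appropriate band identities. The natural candidates come from the characterizations given in the introduction: for $f,g\in (e]_{\Rc}$ one sets $f\cdot g:=f\wedge g$ (or perhaps $g\wedge f$ read in the correct order), and dually for $(e]_{\Lc}$, while on $(e]_{\le}$ one uses $\wedge$ directly and aims to show it is commutative there. First I would record that $(e]_{\Rc}$ is closed under the chosen operation: if $f\wr e$ and $g\wr e$ then the product again lies below $e$ with respect to $\wr$; this closure uses identity (PS2) together with the recovery formula $e\wr f\Leftrightarrow f\wedge e=e$ from the introduction. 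The same must be checked for $(e]_{\Lc}$ by left-right duality and for $(e]_{\le}$ using both relations simultaneously.

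The heart of the matter is the algebraic identities. For the right-normal-band claim I would show $(e]_{\Rc}$ is a band (idempotency is Lemma~\ref{idempotency}'s abstract analogue, namely (PS1)) satisfying $xyz\approx yxz$, and the crucial commutation of the first two factors is exactly what Corollary~\ref{RLinPS} supplies: $(x\wedge y)\wedge z\mathrel{\Rc}(x\wedge z)\wedge y$, so that inside an $\Rc$-class the order of the two leading factors is immaterial up to $\Rc$-equivalence, which collapses to genuine equality once everything is constrained to lie in a single $\Rc$-class below $e$. Dually, the left-normal-band identity $xyz\approx xzy$ for $(e]_{\Lc}$ follows from the $\Lc$-half of Corollary~\ref{RLinPS}. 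For the semilattice claim on $(e]_{\le}$ I would combine both: an element of $(e]_{\le}$ lies in both $(e]_{\Rc}$ and $(e]_{\Lc}$, so it inherits both the left-normal and the right-normal identity, and a band satisfying both $xyz\approx yxz$ and $xyz\approx xzy$ is commutative, hence (being also idempotent) a semilattice.

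I would assemble the argument so that associativity within each subset is obtained first — this is where (PS3) and (PS3') enter, via the observation already made in the excerpt that for fixed $x$ all elements $x\wedge y$ generate a semigroup, so that the iterated products $(\wedge xy_1\dots y_n)$ and $(y_1\wedge x)\wedge\dots\wedge(y_n\wedge x)$ are unambiguously defined. Lemma~\ref{idforPS}, Corollary~\ref{idforfolding} and Lemma~\ref{idforR} then give me all the rewriting rules I need: Lemma~\ref{idforR}'s $(\wedge xyzw)\approx(\wedge xzyw)$ is precisely the transposition law underlying the right-normal identity, and its dual handles the left-normal case.

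The main obstacle I anticipate is the bookkeeping needed to pass from the $\Rc$- and $\Lc$-\emph{equivalences} of Corollary~\ref{RLinPS} to honest \emph{equalities} of the band operations on the respective subsets. The point is that the implication $e\mathrel{\Rc}e'\ \&\ f\mathrel{\Lc}f'\Rightarrow e\wedge f=e'\wedge f'$ recorded at the start of the subsection converts $\Rc$/$\Lc$-relations into equalities only when paired correctly; I must make sure that each application of Corollary~\ref{RLinPS} is combined with an element already in the correct $\Lc$- or $\Rc$-class so that the two relations multiply out to an equality rather than a mere relation. Once the operations are pinned down and shown well-defined and idempotent, verifying the three band identities is a routine, if slightly tedious, substitution using the lemmas above, and I would present it compactly rather than grinding through every bracketing.
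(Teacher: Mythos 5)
Your proposal is essentially correct, but it should be said that the paper gives no proof of Lemma~\ref{nband} at all: it records the statement as ``a well known fact in semigroup theory'' (for idempotents of a locally inverse semigroup these downsets are classically known to be right/left normal bands and semilattices) and merely remarks that the conclusions ``can be easily obtained also directly from the axioms''. What you propose is precisely that direct axiomatic derivation, and your choice of tools is the right one: $(e]_{\Rc}$ coincides with $\{e\wedge y\mid y\in E\}$, so once closure is checked, (PS3) makes it an idempotent semigroup; Lemma~\ref{idforR} applied to $(\wedge efgh)$ together with Lemma~\ref{idforPS} gives $f\wedge g\wedge h=g\wedge f\wedge h$ there; duality via (PS2'), (PS3') handles $(e]_{\Lc}$; and $(e]_{\le}=(e]_{\Rc}\cap(e]_{\Lc}$ inherits both normality identities, and an idempotent semigroup satisfying both $xyz\approx yxz$ and $xyz\approx xzy$ is commutative. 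Since every result you invoke (Lemma~\ref{idforPS}, Corollary~\ref{idforfolding}, Lemma~\ref{idforR}, Corollary~\ref{RLinPS}) precedes Lemma~\ref{nband} in the paper and is proved purely from the axioms, there is no circularity; your route buys self-containedness, which is the paper's stated aim for Section~3, at the cost of being longer than the paper's one-line appeal to known structure theory. One wording error must be repaired, though it does not sink the argument: in your middle paragraph you assert that the $\Rc$-equivalence of Corollary~\ref{RLinPS} ``collapses to genuine equality once everything is constrained to lie in a single $\Rc$-class below $e$''. That is false as stated --- if $\Rc$-equivalence implied equality inside $(e]_{\Rc}$, then $f\wedge g\Rc g\wedge f$ would force $(e]_{\Rc}$ to be commutative, hence a semilattice, contradicting the very statement being proved (a right zero semigroup, where all elements are $\Rc$-equivalent and $(e]_{\Rc}$ is everything, is already a counterexample). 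What actually converts the equivalence into an equality is the mechanism you correctly describe in your final paragraph: pair $f\wedge g\Rc g\wedge f$ with $h\Lc h$ and apply the implication $a\Rc a'\ \&\ b\Lc b'\Rightarrow a\wedge b=a'\wedge b'$, so that the trailing factor $h$, not membership in $(e]_{\Rc}$, does the work.
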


We shall describe next the maximal subsemilattices and the maximal right/left normal subbands of $\A(X)$. Let $x\in X$ and $\ga\in\A(X)$. Then $\Te(x)\wedge\ga=\ga$ if $\l(\ga)=x$ (one can perform an edge-folding on $\Te(x)\sqcap\ga$ merging $\lv_{\Te(x)}$ with $\lv_\ga$, and then a deletion of the thorn $\{(\lv_{\Te(x)},\rv_{\Te(x)}),\rv_{\Te(x)}\}$; we obtain $\ga$ with this procedure). But on the other hand, if $\Te(x)\wedge\ga=\ga$ then $\l(\ga)=\l(\Te(x)\wedge\ga)=x$. Hence $$(\Te(x)]_{\Rc}=\{\al\in\A(X)\mid \l(\al)=x\}=: \R_x(X)$$ and by symmetry $$(\Te(x)]_{\Lc}=\{\al\in\A(X)\mid \r(\al)=x\}=: \L_x(X).$$ One can get now that
\[(\Te(x\wedge y)]_\le =\R_x(X)\cap\L_y(X)= \{\al\in\A(x)\mid (\l(\al),\r(\al))=(x,y)\}=:\S_{x,y}(X)\]
for $x,y\in X$.
\begin{Prop}
The sets $\R_x(X)$ and $\L_x(X)$ for $x\in X$ are respectively
the maximal right and left normal subbands of $\A(X)$, while the sets $\S_{x,y}(X)$ for $x,y\in X$ are the maximal subsemilattices of $\A(X)$.
\end{Prop}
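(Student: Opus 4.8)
The plan is to prove the proposition in two halves: first identify the three families $\R_x(X)$, $\L_x(X)$ and $\S_{x,y}(X)$ as right normal bands, left normal bands and semilattices (respectively), and then establish their maximality among subbands/subsemilattices of $\A(X)$ of the corresponding type. The first half is nearly free: by the computation preceding the statement we have $\R_x(X)=(\Te(x)]_{\Rc}$, $\L_x(X)=(\Te(x)]_{\Lc}$ and $\S_{x,y}(X)=(\Te(x\wedge y)]_\le$, and Lemma \ref{nband} says precisely that each principal $\Rc$-, $\Lc$- and $\le$-ideal of a \pseudo\ is a right normal band, a left normal band and a semilattice. So each of these sets is at least a subband/subsemilattice of the required kind; the only thing to check is that they are genuinely closed under $\wedge$, which is exactly what being a principal one-sided or two-sided ideal guarantees.

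For maximality I would argue as follows. Suppose $B$ is any right normal subband of $\A(X)$, i.e.\ $B$ is closed under $\wedge$ and satisfies the right normal band identity $\al\wedge\be\wedge\ga\approx\be\wedge\al\wedge\ga$. The key invariant to exploit is the leftmost-letter map $\al\mapsto\l(\al)$. First I would record how $\l$ behaves under $\wedge$: from the definition $\al\wedge\be=\ol{\al\sqcap\be}$ and the fact that $\sqcap$ keeps $\lv_\al$ as the distinguished left vertex, one gets $\l(\al\wedge\be)=\l(\al)$ for all $\al,\be$, and dually $\r(\al\wedge\be)=\r(\be)$. Now if $\al,\be\in B$ with $\l(\al)=x$ and $\l(\be)=y$ but $x\ne y$, I would derive a contradiction with idempotency or the right normal identity: since $\l(\al\wedge\be)=x$ while $\l(\be\wedge\al)=y$, the elements $\al\wedge\be$ and $\be\wedge\al$ already have distinct leftmost letters, yet the band is required to be closed and to satisfy strong commutativity constraints. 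More precisely, in a right normal band one has $\al\Rc\al\wedge\be$, i.e.\ $\al$ and $\be$ must be $\Rc$-related when they lie in a band that is a union of a single $\Rc$-class structure; I would show that the right normal band identity forces all elements of $B$ to share the same leftmost letter. This yields $B\subseteq\R_x(X)$ for a single $x$, establishing that $\R_x(X)$ is maximal. The argument for $\L_x(X)$ is dual via the rightmost-letter map, and for $\S_{x,y}(X)$ one uses that a subsemilattice is commutative, so both $\l$ and $\r$ must be constant on it (again via $\l(\al\wedge\be)=\l(\al)=\l(\be\wedge\al)=\l(\be)$ forced by commutativity), giving $B\subseteq\S_{x,y}(X)$.

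The step I expect to be the main obstacle is pinning down precisely why a right normal band identity forces constancy of the leftmost letter, rather than merely constraining it. The clean way is to use the characterization of $\Rc$ on a \pseudo: in any right normal band every pair of elements is $\Rc$-related (the band reduces to a single right-zero-like structure modulo its underlying semilattice), and one then invokes that $\al\Rc\be$ in $\A(X)$ implies $\l(\al)=\l(\be)$. Establishing this last implication cleanly is where I would be most careful: I would verify $(\al]_{\Rc}=(\be]_{\Rc}\Rightarrow\l(\al)=\l(\be)$ directly from the earlier computation $(\Te(x)]_{\Rc}=\R_x(X)$, noting that $\Te(\l(\al))$ is $\Rc$-above $\al$ and is the $\Rc$-class determined by the leftmost letter. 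Once that compatibility of $\Rc$ with $\l$ (and dually $\Lc$ with $\r$) is in hand, maximality of all three families follows uniformly and the routine closure checks complete the proof.
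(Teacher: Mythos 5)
Your first half (identifying $\R_x(X)$, $\L_x(X)$, $\S_{x,y}(X)$ as right normal bands, left normal bands and semilattices via Lemma \ref{nband}) and your maximality argument for the semilattices (commutativity together with $\l(\al\wedge\be)=\l(\al)$ and $\r(\al\wedge\be)=\r(\be)$ forces $\l$ and $\r$ to be constant) are correct and agree with the paper. The genuine gap is in the maximality argument for the right (and dually left) normal bands: your key claim that \emph{in any right normal band every pair of elements is $\Rc$-related} is false. A semilattice is itself a right normal band (idempotency plus commutativity yield $xyz\approx yxz$), and on a semilattice the relation $\Rc$ is the equality relation: there $(e]_{\Rc}=\{g\mid g\le e\}$, so $(e]_{\Rc}=(f]_{\Rc}$ forces $e=f$. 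Concretely, $\S_{x,y}(X)$ is a right normal subband of $\A(X)$ whose distinct elements are never $\Rc$-related in $\A(X)$ (by Corollary \ref{RonA}, $\al\Rc\be$ means ${}^l\al={}^l\be$, which fails for most pairs in $\S_{x,y}(X)$). Your auxiliary claim that $\al\Rc\al\wedge\be$ holds in a right normal band fails for the same reason: in a semilattice it would assert $\al=\al\wedge\be$. So the route ``right normal $\Rightarrow$ single $\Rc$-class $\Rightarrow$ constant leftmost letter'' collapses at its first step; only the inclusion $\al\wedge\be\wr\al$ is automatic, not the converse.

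The fix is short and needs no mention of $\Rc$ at all; it is exactly what the paper does. If $B$ is a right normal subband of $\A(X)$ and $\al,\be\in B$, apply the right normal band identity $xyz\approx yxz$ with $x=\al$, $y=\be$, $z=\al$ to get $(\al\wedge\be)\wedge\al=(\be\wedge\al)\wedge\al$ as elements of $\A(X)$. Now use your own (correct) observation $\l(\ga\wedge\de)=\l(\ga)$ twice on each side: the left-hand side has leftmost letter $\l(\al)$, the right-hand side has leftmost letter $\l(\be)$, whence $\l(\al)=\l(\be)$. Thus $B\subseteq\R_x(X)$ for $x$ the common leftmost letter, and since each $\R_x(X)$ is itself a right normal band by Lemma \ref{nband}, these sets are precisely the maximal right normal subbands; the left-sided case is dual.
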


\begin{proof}
It follows from $\l(\al)=\l((\al\wedge \be)\wedge\al)$ and $\l(\be)= \l((\be\wedge\al) \wedge\al)$ that any right normal subband of $\A(X)$ is contained in some $\R_x(X)$. But by Lemma \ref{nband} the sets $\R_x(X)$, $x\in X$, are right normal subbands of $\A(X)$ and hence are the maximal right normal subbands. The `left' case follows by symmetry. Since $(\l(\al\wedge\be),\r(\al\wedge\be))=(\l(\al),\r(\be))$ it is evident that any subsemilattice of $\A(X)$ is contained in some $\S_{x,y}(X)$. Once again by Lemma \ref{nband} we conclude that the sets $\S_{x,y}(X)$ are the maximal subsemilattices of $\A(X)$.
\end{proof}

The following corollary is now obvious.
\begin{Cor}\label{comA}
Two elements $\al,\be\in \A(X)$ commute if and only if $$(\l(\al),\r(\al))=(\l(\be),\r(\be)).$$
\end{Cor}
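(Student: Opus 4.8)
The plan is to obtain this directly from the preceding Proposition, using in addition the invariant formula $(\l(\al\wedge\be),\r(\al\wedge\be))=(\l(\al),\r(\be))$ that was recorded at the end of the Proposition's proof. I would treat the two implications separately, since one uses the semilattice structure of the blocks $\S_{x,y}(X)$ and the other uses only the behaviour of the invariants $\l$ and $\r$ under $\wedge$.

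For the ``if'' direction I would argue as follows. Suppose $(\l(\al),\r(\al))=(\l(\be),\r(\be))=(x,y)$. Then by definition $\al,\be\in\S_{x,y}(X)$, and by the Proposition $\S_{x,y}(X)$ is a (maximal) subsemilattice of $\A(X)$. In particular it is commutative, so $\al\wedge\be=\be\wedge\al$ and $\al,\be$ commute.

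For the ``only if'' direction I would use the invariant formula alone. Assume $\al\wedge\be=\be\wedge\al$. Applying $(\l(\al\wedge\be),\r(\al\wedge\be))=(\l(\al),\r(\be))$ to each product gives
\[
(\l(\al\wedge\be),\r(\al\wedge\be))=(\l(\al),\r(\be))\quad\text{and}\quad(\l(\be\wedge\al),\r(\be\wedge\al))=(\l(\be),\r(\al)).
\]
Comparing leftmost labels forces $\l(\al)=\l(\be)$, and comparing rightmost labels forces $\r(\be)=\r(\al)$; hence $(\l(\al),\r(\al))=(\l(\be),\r(\be))$, as required.

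I expect no genuine obstacle here, as the corollary is essentially bookkeeping on top of results already established. The one point I would make sure of is that the invariant formula $\l(\al\wedge\be)=\l(\al)$ and $\r(\al\wedge\be)=\r(\be)$ is legitimately available: it follows because the distinguished left and right vertices of $\al\sqcap\be$ are $\lv_\al$ and $\rv_\be$, and neither reduction rule disturbs these. Indeed, thorn-deletion removes only \emph{non-essential} thorns (whose vertices are not distinguished), and an edge-folding retains the label of any merged distinguished vertex; so the labels $\l$ and $\r$ of the distinguished vertices survive the passage from $\al\sqcap\be$ to its reduced form $\al\wedge\be$.
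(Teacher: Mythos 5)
Your proposal is correct and matches the paper's intent: the paper states the corollary is ``now obvious'' from the preceding Proposition, and your fleshing-out (commutativity inside the semilattice block $\S_{x,y}(X)$ for the ``if'' direction, and the invariant identity $(\l(\al\wedge\be),\r(\al\wedge\be))=(\l(\al),\r(\be))$ --- already used in the Proposition's proof --- for the ``only if'' direction) is exactly the argument the authors leave implicit. Your closing check that the distinguished vertices' labels survive the reduction from $\al\sqcap\be$ to $\al\wedge\be$ is also sound and justifies the invariant formula properly.
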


Deletions of thorns and edge-foldings may be applied in any order to a graph $\ga\in \B(X)$ in order to obtain $\ol\ga$. However, the application of an edge-folding may produce a new possibility to delete a thorn while the deletion of a thorn will never give rise to a new possibility for an edge-folding. It follows that, in order to obtain the reduced form $\ol\ga$ we may first perform all possible edge-foldings and delete only afterwards all non-essential thorns. Let $\wt\ga$ be the result obtained by applying to $\ga$ all possible edge-foldings. Then $\ol\ga$ is obtained from $\wt\ga$ by the deletion of (all non-essential) thorns.

Each edge-folding is a graph-homomorphism in the usual sense (a mapping sending vertices to vertices, edges to edges and preserving the adjacency relation). In particular, there is a graph homomorphism $\pi_\ga:\ga\twoheadrightarrow \wt\ga$. It follows immediately from the definition of edge-folding that for any subtree $\be$ of $\ga$ we have $\pi_\be=\pi_\ga\vert_\be$, and, in particular, $\wt\be$ is always a subtree of $\wt\ga$. (Note that $\ol\be$ is not necessarily a subtree of $\ol\ga$ since there may be a non-essential thorn in $\wt\ga$ which is essential in $\be$, hence may not be deleted in the reduction $\wt\be\to \ol\be$, yet it may be deleted in the reduction $\wt\ga\to\ol\ga$.)

In the following we shall consider four types of bipartite trees: bi-rooted, left-rooted, right-rooted and non-rooted ones. In this context, the terms ``subgraph'' and ``subtree'' have to be always understood within the appropriate category. For example, a left-rooted subtree of a left-rooted tree $\ga$ is a subtree $\be$ of $\ga$ with a distinguished left vertex which coincides with the left root of $\ga$. Take $\al,\be\in \A(X)$ and let us look carefully at the mapping $\al\sqcap\be\twoheadrightarrow \wt{\al\sqcap\be}$  since this case is crucial for the following results. The first observation is that $\pi_\al$ embeds $\al$ into $\ga=\wt{\al\sqcap\be}$ (since $\al=\wt\al$) with $\lv_\al\pi_\al =\lv_\ga$. So we can see ${}^L\al$ as a left-rooted subtree of ${}^L\ga$. By symmetry we can see also $\be^R$ as a right-rooted subtree of $\ga^R$. Furthermore, each non-essential thorn $\{e,a\}$ of $\ga$ is contained in $\al$ or $\be$. If
$\{e,a\}\subseteq\al$, then $\{e,a\}$ is also a thorn of $\al$ (the degree of the vertex $a$ in $\al$ is not greater than its degree in $\ga$), whence it is the only possible (essential) thorn of $\al$ since $\al\in\A(X)$. Thus $a=\rv_\al$ (since $\lv_\al=\lv_\ga$ and $\{e,a\}$ is a non-essential thorn of $\ga$) and $\r(\al)=\l(\al) \neq \r(\be)$ (otherwise we could merge $\rv_\al$ with $\rv_\be=\rv_\ga$ in $\ga$ by an edge-folding). In a dual way we can see that if $\{e,a\}\subseteq\be$, then $\{e,a\}$ is the only possible  (essential) thorn of $\be$ with $a=\lv_\be$ and $\l(\be)=\r(\be)\neq \l(\al)$. Summing it up, there are only two candidates for non-essential thorns in $\ga=\wt{\al\sqcap\be}$, namely $\{(\lv_\ga, \rv_\al),\rv_\al\}$ and $\{(\lv_\be, \rv_\ga),\lv_\be\}$ (the former if $\{(\lv_\al, \rv_\al),\rv_\al\}$ is an essential thorn of $\al$ and $\r(\al)\neq \r(\be)$, and the latter if $\{(\lv_\be, \rv_\be),\lv_\be\}$ is an essential thorn of $\be$ and $\l(\be)\neq \l(\al)$).

For $\al\in \A(X)$ let
\[{}^l\al=\begin{cases} {}^L\al\setminus\{(\lv_\al,\rv_\al),\rv_\al\} &\text{ if }\{(\lv_\al,\rv_\al),\rv_\al\}\text{ is a thorn }\\ {}^L\al &\text{ otherwise}\end{cases}
\]
and define $\al^r$ dually. If $\al=$\tikz[baseline=2.25pt,scale=0.3]{\coordinate [label=below:$x$] (4) at (0,0.5); \coordinate [label=below:$x$] (5) at (2.5,0.5);\draw[double] (4) --(5);\draw (4) node{$\bullet$};\draw (5) node{$\bullet$};} for some $x\in X$ then ${}^l\al$ is the singleton graph $\underset{x}{\bullet}$  considered as a bipartite graph with one (distinguished) left vertex and no right vertex; the dual is assumed for $\al^r$.
In any case, ${}^l\al$ and $\al^r$ are
left-rooted respectively right-rooted bipartite
trees without non-essential thorns.
We further note that unless $\al=$\tikz[baseline=2.25pt,scale=0.3]{\coordinate [label=below:$x$] (4) at (0,0.5); \coordinate [label=below:$x$] (5) at (2.5,0.5);\draw[double] (4) --(5);\draw (4) node{$\bullet$};\draw (5) node{$\bullet$};} for some $x\in X$ we always have that at least one of the two equalities ${}^l\al={}^L\al$ and $\al^r=\al^R$ holds. An immediate consequence is:
\begin{Lemma}\label{LRbelowimpliesbelow} Let $\al,\be\in \A(X)$; if ${}^l\al$ is a left-rooted subgraph of ${}^l\be$ and $\al^r$ is a right-rooted subgraph of $\be^r$ then $\al$ is a bi-rooted subgraph of $\be$.
\end{Lemma}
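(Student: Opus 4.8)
The plan is to exploit the observation recorded immediately before the lemma: unless $\al$ is the trivial one-edge graph whose two endpoints are both labelled by a single letter $x$, at least one of the equalities ${}^l\al={}^L\al$ and $\al^r=\al^R$ holds. I would first dispose of the trivial case directly. If $\al$ is this one-edge graph, then ${}^l\al$ and $\al^r$ are the single-vertex left- respectively right-rooted graphs labelled $x$, so the two hypotheses say precisely that $\l(\be)=x=\r(\be)$. Since $\lv_\be$ and $\rv_\be$ are adjacent by the very definition of $\B'(X)$, the distinguished edge $(\lv_\be,\rv_\be)$, both of whose endpoints are labelled $x$, is a bi-rooted subgraph of $\be$ isomorphic to $\al$. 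For non-trivial $\al$, by the left-right symmetry of all the notions involved I may assume ${}^l\al={}^L\al$, the remaining case being settled by the dual argument.

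Under this assumption the first hypothesis reads that ${}^L\al$, i.e.\ the whole graph $\al$ viewed with only its left root marked, is a left-rooted subgraph of ${}^l\be$. Since ${}^l\be$ is in turn a subgraph of ${}^L\be$, this yields an embedding $\iota$ of $\al$ into $\be$ with $\iota(\lv_\al)=\lv_\be$ carrying every vertex and edge of $\al$ into $\be$. The only point left to check is that this same embedding also matches the right roots, that is $\iota(\rv_\al)=\rv_\be$; once this is established $\iota(\al)$ is a bi-rooted subtree of $\be$ and we are finished.

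To locate $\iota(\rv_\al)$ I would use that $\be$, being a member of $\A(X)$, is \emph{reduced} and in particular admits no edge-folding. As $\rv_\al$ is adjacent to $\lv_\al$ in $\al$ (the two roots are always adjacent), $\iota(\rv_\al)$ is a right neighbour of $\lv_\be$, and since $\iota$ preserves labels its label is $\r(\al)$. The second hypothesis, that $\al^r$ is a right-rooted subgraph of $\be^r$, forces the two right roots to coincide and hence $\r(\al)=\r(\be)$. But $\rv_\be$ is itself a right neighbour of $\lv_\be$ (again because the roots of $\be$ are adjacent) carrying the label $\r(\be)$. The impossibility of an edge-folding at $\lv_\be$ says exactly that no two edges emanating from $\lv_\be$ reach right vertices with a common label, so $\lv_\be$ has a unique right neighbour of label $\r(\be)$. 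As both $\iota(\rv_\al)$ and $\rv_\be$ are such neighbours, $\iota(\rv_\al)=\rv_\be$, as required.

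The dual case $\al^r=\al^R$ is identical after interchanging the roles of left and right: the second hypothesis now embeds all of $\al$ via $\rv_\al\mapsto\rv_\be$, and the absence of an edge-folding at $\rv_\be$ forces the image of $\lv_\al$ to be $\lv_\be$. The one genuinely delicate point in the whole argument is the reconciliation of the two \emph{a priori} independent embeddings furnished by the two hypotheses; it is precisely the reducedness of $\be$, namely the absence of a foldable pair of edges at a root, that guarantees that the single embedding coming from one hypothesis automatically respects the root dictated by the other, so that no compatibility problem between the two embeddings can occur.
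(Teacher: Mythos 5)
Your proof is correct and takes essentially the same route the paper intends: the paper states the lemma without proof, as an ``immediate consequence'' of the observation that for non-trivial $\al$ at least one of ${}^l\al={}^L\al$ and $\al^r=\al^R$ holds, and your argument is exactly the natural filling-in of that step. In particular, your key point---that the hypothesis corresponding to the side where no thorn was deleted embeds \emph{all} of $\al$ root-to-root, and that reducedness of $\be$ (no foldable pair of edges at $\lv_\be$, resp.\ $\rv_\be$) then forces the image of the other root of $\al$ to coincide with the corresponding root of $\be$---is precisely what makes the paper's observation yield the lemma.
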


Moreover, for all $\al,\be\in\A(X)$ the above arguments imply that
\begin{equation}\label{productinFPSviafoldings}
\al\wedge \be={({}^l\al\cup\{(\lv_\al,\rv_\be)\}\cup \be^r)}{}\wt\ .
\end{equation}
 It follows that ${}^l\al$ is a left-rooted subtree of  ${}^l(\al\wedge\be)$ and $\be^r$ is a right-rooted subtree of $(\al\wedge\be)^r$.
\begin{Thm}\label{leonA} Let $\al,\be\in\A(X)$; then
\begin{enumerate}
\item $\be\wr\al$ if and only if ${}^l\al$ is a left-rooted subtree of ${}^l\be$;
\item $\be\wl\al$ if and only if $\al^r$ is a right-rooted subtree of $\be^r$;
\item $\be\le\al$ if and only if $\al$ is a bi-rooted subtree of $\be$.
\end{enumerate}
\end{Thm}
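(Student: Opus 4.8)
The plan is to establish (1) and (2) first and then deduce (3); since (1) and (2) are left--right dual, I concentrate on (1), which by the definition of $\wr$ reads: $\al\wedge\be=\be$ if and only if ${}^l\al$ is a left-rooted subtree of ${}^l\be$. The forward implication costs nothing. By equation (\ref{productinFPSviafoldings}) and the remark recorded immediately after it, ${}^l\al$ is \emph{always} a left-rooted subtree of ${}^l(\al\wedge\be)$; hence if $\be\wr\al$, i.e.\ $\al\wedge\be=\be$, then ${}^l\al$ is a left-rooted subtree of ${}^l\be$. Dually, the forward implication of (2) follows at once from the fact that $\be^r$ is always a right-rooted subtree of $(\al\wedge\be)^r$.

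The substance lies in the converse of (1). Assume ${}^l\al$ is a left-rooted subtree of ${}^l\be$ via an embedding carrying $\lv_\al$ to $\lv_\be$. By (\ref{productinFPSviafoldings}), $\al\wedge\be$ is the reduced form of $G:={}^l\al\cup\{(\lv_\al,\rv_\be)\}\cup\be^r$, while combining Lemma \ref{idempotency} ($\be\wedge\be=\be$) with (\ref{productinFPSviafoldings}) applied to $\be\wedge\be$ shows that $\be$ is the reduced form of $H:={}^l\be\cup\{(\lv_\be,\rv_\be)\}\cup\be^r$. The embedding ${}^l\al\hookrightarrow{}^l\be$ (which sends $\lv_\al$ to $\lv_\be$, hence the connecting edge $(\lv_\al,\rv_\be)$ to $(\lv_\be,\rv_\be)$), together with the identity on the shared right copy $\be^r$, exhibits $G$ as a bipartite subtree of $H$ containing all of $\be^r$. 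Invoking the restriction property $\pi_G=\pi_H|_G$ of folding maps on subtrees, I obtain $\wt G=\pi_H(G)$. Now the folding $\pi_H\colon H\to\wt H=\be$ collapses the left copy ${}^l\be$ onto the right copy $\be^r$ (this is precisely the idempotency folding of Lemma \ref{idempotency}), so every vertex of $\be$ is already the image under $\pi_H$ either of a vertex of $\be^r\subseteq G$ or of $\lv_\al\in G$ (the latter accounting for $\lv_\be$ in case that vertex was trimmed off $\be^r$); thus $\pi_H(G)=\be$. Since the use of the trimmed graphs ${}^l\al$ and $\be^r$ already guarantees that $\wt G$ carries no non-essential thorn, this gives $\al\wedge\be=\wt G=\be$, that is, $\be\wr\al$. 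Statement (2) follows by the left--right dual argument.

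Finally, for (3), by definition $\be\le\al$ means $\be\wr\al$ and $\be\wl\al$, which by (1) and (2) is equivalent to ${}^l\al$ being a left-rooted subtree of ${}^l\be$ \emph{and} $\al^r$ being a right-rooted subtree of $\be^r$. Lemma \ref{LRbelowimpliesbelow} converts this conjunction into the statement that $\al$ is a bi-rooted subtree of $\be$, giving one direction. For the reverse direction one checks directly that if $\al$ is a bi-rooted subtree of $\be$ then ${}^L\al\subseteq{}^L\be$ and $\al^R\subseteq\be^R$, and that passing to the trimmed versions ${}^l,{}^r$ respects these inclusions: the only point is that a thorn trimmed at $\rv_\al=\rv_\be$ in forming ${}^l\be$ (resp.\ at $\lv_\al=\lv_\be$ in forming $\be^r$) is likewise trimmed in forming ${}^l\al$ (resp.\ $\al^r$), which holds because that distinguished vertex already has degree one in $\al$ and carries the matching label.

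I expect the converse of (1) to be the main obstacle: turning the informal picture ``fold the copy of ${}^l\al$ lying inside ${}^l\be$ down onto $\be^r$'' into an airtight argument. The delicate points are verifying that $\pi_H(G)$ is \emph{all} of $\be$ rather than a proper subtree, and correctly bookkeeping the trimmed thorns at $\lv_\be$ and $\rv_\be$; in the degenerate case where $\al$ is a generator $\Te(x)$ (so ${}^l\al$ is a single vertex) the folding argument degenerates and one instead appeals to the identity $\Te(x)\wedge\ga=\ga$, valid whenever $\l(\ga)=x$, established at the start of Subsection \ref{greensrelations}.
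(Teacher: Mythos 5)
Your proposal is correct and follows the paper's own route: the forward implications come from the remark after (\ref{productinFPSviafoldings}), the converse of (1) is the idempotency-style folding argument applied to that formula, (2) is dual, and (3) combines (1) and (2) with Lemma \ref{LRbelowimpliesbelow} in one direction and the thorn-trimming compatibility check in the other. The only difference is one of packaging: where the paper simply says ``a proof similar to that of Lemma \ref{idempotency} applied to formula (\ref{productinFPSviafoldings})'', you implement that folding by embedding $G={}^l\al\cup\{(\lv_\al,\rv_\be)\}\cup\be^r$ into $H={}^l\be\cup\{(\lv_\be,\rv_\be)\}\cup\be^r$ and invoking the restriction property $\pi_G=\pi_H\vert_G$ together with $\wt H=\be$, including the correct bookkeeping of the trimmed thorn at $\lv_\be$ --- a valid, slightly more explicit rendering of the same idea.
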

\begin{proof} (1) We have seen for arbitrary $\al,\be\in \A(X)$ that ${}^l\al$ is a left-rooted subgraph of ${}^l(\al\wedge\be)$. Hence, if $\al\wedge\be=\be$ then ${}^l\al$ is a left-rooted subgraph of ${}^l\be$. Suppose conversely that ${}^l\al$ is a left-rooted subgraph of ${}^l\be$; then a proof similar to that of Lemma \ref{idempotency} applied to formula (\ref{productinFPSviafoldings}) shows that $\al\wedge\be=\be$. The proof of item (2) is completely analogous. (3) Suppose first that $\al$ is a bi-rooted subgraph of $\be$;
then ${}^l\al$ is a left-rooted subgraph of ${}^l\be$ and $\al^r$ is a right-rooted subgraph of $\be^r$. It follows that $\be\wr\al$ as well as $\be\wl\al$ hence also $\be\le\al$. Suppose conversely that $\be\le\al$. Then $\be\wr\al$ and $\be\wl\al$ and hence ${}^l\al$ is a left-rooted subgraph of ${}^l\be$ and $\al^r$ is a right-rooted subgraph of $\be^r$. Lemma \ref{LRbelowimpliesbelow} now implies that $\al$ is a bi-rooted subgraph of $\be$
\end{proof}
An immediate consequence is:
\begin{Cor}\label{RonA}
For $\al,\be\in \A(X)$ the following hold.
\begin{enumerate}
\item $\al\Rc\be$ if and only if ${}^l\al={}^l\be$,
\item $\al\Lc\be$ if and only if $\al^r=\be^r$.
\end{enumerate}
\end{Cor}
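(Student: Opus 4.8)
The plan is to deduce both equivalences directly from Theorem~\ref{leonA}, using the observation recorded at the opening of Section~\ref{greensrelations} that $\Rc$ and $\Lc$ are exactly the equivalence relations induced by the quasiorders $\wr$ and $\wl$. Concretely, for part~(1) I would first rewrite $\al\Rc\be$ in terms of the quasiorder. By definition $\al\Rc\be$ means $(\al]_{\Rc}=(\be]_{\Rc}$, and since $(\cdot]_{\Rc}$ is the down-set of the quasiorder $\wr$, reflexivity and transitivity of $\wr$ give that this equality of down-sets holds if and only if both $\al\wr\be$ and $\be\wr\al$ hold. So the starting point is the reformulation $\al\Rc\be \Leftrightarrow (\al\wr\be\ \text{and}\ \be\wr\al)$.

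Next I would feed each of these two instances into Theorem~\ref{leonA}(1). The theorem asserts that $\be\wr\al$ holds precisely when ${}^l\al$ is a left-rooted subtree of ${}^l\be$; interchanging the roles of $\al$ and $\be$ yields that $\al\wr\be$ holds precisely when ${}^l\be$ is a left-rooted subtree of ${}^l\al$. Combining these, $\al\Rc\be$ is equivalent to the statement that ${}^l\al$ is a left-rooted subtree of ${}^l\be$ and, simultaneously, ${}^l\be$ is a left-rooted subtree of ${}^l\al$.

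The only point requiring a genuine (if routine) argument, rather than a bare citation, is to pass from mutual subtree containment to equality. Here one uses that ${}^l\al$ and ${}^l\be$ are \emph{finite} labeled bipartite trees sharing the same distinguished left root: if each is a subgraph of the other then they have identical vertex and edge sets, whence ${}^l\al={}^l\be$. The converse is trivial, since if ${}^l\al={}^l\be$ then each is a left-rooted subtree of the other, so both $\al\wr\be$ and $\be\wr\al$ hold and therefore $\al\Rc\be$. This settles~(1), and~(2) follows by the evident left--right dual, invoking Theorem~\ref{leonA}(2) and the relation $\wl$ in place of~(1) and $\wr$.

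I do not anticipate any real obstacle: the mathematical content is entirely absorbed into Theorem~\ref{leonA}, and the corollary is essentially a bookkeeping reformulation — which is why it is stated as an immediate consequence. The single subtlety worth flagging explicitly is the implication ``mutually a subtree of $\Rightarrow$ equal,'' which rests on finiteness; this is harmless here because every member of $\A(X)$, and hence every ${}^l\al$ and $\al^r$, is a finite graph.
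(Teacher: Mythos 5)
Your proposal is correct and follows exactly the route the paper intends: the paper states Corollary~\ref{RonA} as an ``immediate consequence'' of Theorem~\ref{leonA}, namely unwinding $\Rc$ (resp.\ $\Lc$) as mutual $\wr$ (resp.\ $\wl$), applying the theorem in both directions, and using finiteness to pass from mutual subtree containment to equality. Your explicit flagging of that last finiteness point is the only detail the paper leaves tacit, and it is handled correctly.
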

Call a vertex in a vertex-labeled bipartite graph \emph{paired} if it is adjacent to another vertex having the same label. We can use the preceding results to determine the cardinalities of the $\Rc$- and $\Lc$-classes of $\A(X)$.
\begin{Cor}\label{LRsizes} For $\al\in \A(X)$ the size of the $\Rc$-class of $\al$ is the degree of $\lv_\al$ if $\lv_\al$ is paired, otherwise it is the degree of $\lv_\al$ plus $1$. Dually, the size of the $\Lc$-class of $\al$ is the degree of $\rv_\al$ if $\rv_\al$ is paired, otherwise it is the degree of $\rv_\al$ plus $1$.
\end{Cor}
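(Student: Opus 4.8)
The plan is to compute the size of the $\Rc$-class of $\al$ directly from the description of $\Rc$ in Corollary~\ref{RonA}, and then obtain the $\Lc$-statement by the left-right dual argument. By Corollary~\ref{RonA}, $\be\Rc\al$ holds exactly when ${}^l\be={}^l\al$, so the $\Rc$-class of $\al$ is in bijection with the set of reduced bi-rooted trees $\be$ satisfying ${}^l\be=T$, where $T:={}^l\al$ and $\lv:=\lv_\al=\lv_T$. Since passing from $\be$ to ${}^l\be$ keeps the left root $\lv$ and forgets only the distinguished right vertex $\rv_\be$ together with the essential thorn at it (if any), reconstructing such a $\be$ from $T$ amounts to choosing its right root $\rv_\be$, which must be adjacent to $\lv$. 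I would organize the count according to whether $\rv_\be$ is an already-present vertex of $T$ or a newly adjoined one, noting that ${}^L\be$ can differ from $T$ by at most this one thorn, so these two cases are exhaustive.

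Next I would record two consequences of $T$ being reduced that make the enumeration clean. Because $T$ has no non-essential thorns and $\lv$ is its only distinguished vertex, no right neighbor of $\lv$ in $T$ is a thorn; and because $T$ admits no edge-folding, no two right neighbors of $\lv$ carry the same label (otherwise the two edges to $\lv$ would fold). Let $N$ be the set of right neighbors of $\lv$ in $T$. For each $b\in N$, taking $\rv_\be=b$ yields a reduced $\be$ with ${}^L\be=T$ and ${}^l\be=T$ (no thorn is created, since $b$ is not a thorn), and distinct choices of $b$ give non-isomorphic bi-rooted trees because the labels in $N$ are pairwise distinct; this produces exactly $|N|$ elements. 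The only remaining possibility is to adjoin a new right vertex $\rv_\be$ at $\lv$ carrying the label $\cb_{\lv}$ and forming an essential thorn, so that ${}^l\be=T$ again; this contributes one further element precisely when it is reduced, i.e.\ when no $b\in N$ already has label $\cb_\lv$ (otherwise the new edge folds onto an existing one, reproducing an element already counted). Thus the class has size $|N|+\ep$ with $\ep=1$ if no vertex of $N$ has label $\cb_\lv$ and $\ep=0$ otherwise.

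It then remains to translate $|N|+\ep$ back into the degree $d$ of $\lv_\al$ and the pairedness condition, and this bookkeeping is the step I expect to be the main obstacle, because ${}^l\al$ may differ from $\al$ by exactly one right neighbor of $\lv$. I would distinguish whether $\al$ has an essential thorn at $\rv_\al$. If it does not, then $T={}^L\al$, so $N$ is precisely the set of right neighbors of $\lv_\al$ in $\al$ and $|N|=d$; here $\lv_\al$ is paired exactly when some $b\in N$ has label $\cb_\lv$, i.e.\ exactly when $\ep=0$, giving size $d$ if paired and $d+1$ if not. If $\al$ does have an essential thorn at $\rv_\al$, then $\rv_\al$ is a degree-one right neighbor of $\lv_\al$ of label $\cb_\lv$ that is removed in passing to $T$, so $|N|=d-1$; moreover the reducedness of $\al$ forbids a second right neighbor of $\lv_\al$ of label $\cb_\lv$ (else the two edges to $\lv_\al$ would fold), so $\ep=1$ and the size is $(d-1)+1=d$, consistent with $\lv_\al$ being paired via $\rv_\al$. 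In every case the size equals $d$ when $\lv_\al$ is paired and $d+1$ otherwise, as claimed; the $\Lc$-statement follows by the symmetric computation using $\al^r$ and $\rv_\al$.
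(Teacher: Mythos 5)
Your proof is correct and follows essentially the same route as the paper: both count the $\Rc$-class by enumerating, via Corollary \ref{RonA}, the admissible choices of right root adjacent to $\lv_\al$ in ${}^l\al$, plus one extra element obtained by attaching a thorn exactly when $\lv_\al$ is unpaired. The only difference is one of detail, not of method: you make explicit the bookkeeping the paper's terse proof leaves implicit (the case where $\al$ carries an essential thorn at $\rv_\al$, and the use of fold-reducedness to see that distinct neighbors give distinct elements).
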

\begin{proof} Each edge of $\al$ containing $\lv_\al$ determines a right vertex in ${}^l\al$ that can be chosen as a distinguished right vertex in order to get an $\Rc$-related element of $\A(X)$. If $\lv_\al$ is paired these are all possible choices for right roots of an $\Rc$-related element. If $\lv_\al$ is not paired then a further $\Rc$-related element can be obtained by adding a thorn to $\lv_\al$ and declaring the added vertex to be
the distinguished right vertex. The `$\Lc$-case' is completely analogous.
\end{proof}
A particular instance of this corollary is when $\al\in \A(X)$ has a thorn: if  $\lv_\al$/$\rv_\al$ is the degree $1$ vertex of a thorn of $\al$ then the $\Rc$-class/$\Lc$-class of $\al$ is a singleton.

Let us turn now to the relation $\Rc\vee\Lc$ on $\A(X)$. We shall say that two elements $\al$ and $\be$ of $\A(X)$ are \emph{connected} if $\al \mathrel{(\Rc\vee\Lc)} \be$, and we shall call the $\Rc\vee\Lc$-classes the \emph{connected components} of $\A(X)$. Thus $\al$ and $\be$ are connected if (and only if) there exists a finite sequence $\al=\ga_0,\ga_1, \cdots,\ga_n=\be$ of alternately $\Rc$- or $\Lc$-equivalent elements of $\A(X)$ (such a kind of sequence is referred to as an `$E$-chain' in semigroup literature). It follows from Corollary \ref{RonA} that the singleton sets $\{\text{\tikz[baseline=2.25pt,scale=0.3]{\coordinate [label=below:$x$] (4) at (0,0.5); \coordinate [label=below:$x$] (5) at (2.5,0.5);\draw[double] (4) --(5);\draw (4) node{$\bullet$};\draw (5) node{$\bullet$};}}\}$ are examples of connected components of $\A(X)$ and that they are the only connected components of $\A(X)$ with a single element. Note that the connectedness relation is an analogue for pseudosemilattices of Green's $\Dc$-relation for semigroups. Indeed, two members of a pseudosemilattice $E$ are connected if and only if they are $\Dc$-related in each idempotent generated locally inverse semigroup $S$ having $E$ as its pseudosemilattice of idempotents.

Let $\C'(X)$ denote the set of all finite non-trivial bipartite trees the vertices of which are labeled by letters of $X$ and which are thorn-free and reduced for edge foldings,
and let $\C(X)=\C'(X)\cup X$ where the letters $x\in X$ are represented as the singleton graphs $\underset{x}{\bullet}$. We need the following definition to characterize the connected components of $\A(X)$. For $\al=$ \tikz[baseline=2.25pt,scale=0.3]{\coordinate [label=below:$x$] (4) at (0,0.5); \coordinate [label=below:$x$] (5) at (2.5,0.5);\draw[double] (4) --(5);\draw (4) node{$\bullet$};\draw (5) node{$\bullet$};} set $\wh\al=\underset{x}{\bullet}$, otherwise, for each bi-rooted, left-rooted or right-rooted tree $\al$ let $\wh\al$ be the (non-rooted) bipartite tree obtained from $\al$ by un-marking the distinguished root(s) and removing the existing thorns. The mapping $\al\mapsto \wh\al$ is a surjection  $\A(X)\twoheadrightarrow\C(X)$. By construction
\begin{equation}\label{hatal}
\wh{{\,}^l\al}=\wh\al=\wh{\al^r}.
\end{equation}
The next result describes the connected components of $\A(X)$ as the equivalence classes induced by the mapping $\al\mapsto\wh\al$.

\begin{Prop}
For $\al,\be\in\A(X)$, $\al \mathrel{(\Rc\vee\Lc)}\be$ if and only if $\wh\al=\wh\be$.
\end{Prop}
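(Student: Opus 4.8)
The statement asserts that two elements $\al,\be\in\A(X)$ lie in the same connected component (that is, $\al\mathrel{(\Rc\vee\Lc)}\be$) precisely when they yield the same underlying unrooted thorn-free tree $\wh\al=\wh\be$. The natural strategy is to prove the two implications separately. The forward direction should be the easy one: since $\Rc\vee\Lc$ is generated by $\Rc$ and $\Lc$, it suffices to show that $\wh\al=\wh\be$ whenever $\al\Rc\be$ or $\al\Lc\be$, and then invoke transitivity along an $E$-chain. But this is immediate from Corollary \ref{RonA} together with equation (\ref{hatal}): if $\al\Rc\be$ then ${}^l\al={}^l\be$, so $\wh\al=\wh{{\,}^l\al}=\wh{{\,}^l\be}=\wh\be$, and dually for $\Lc$ using $\al^r$. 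Chaining these equalities along a finite $E$-chain $\al=\ga_0,\ga_1,\dots,\ga_n=\be$ gives $\wh\al=\wh\be$.

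For the converse I would assume $\wh\al=\wh\be=:\ga_0$, a fixed thorn-free, edge-folding-reduced bipartite tree in $\C(X)$, and construct an explicit $E$-chain from $\al$ to $\be$. The key observation is that every element $\al'\in\A(X)$ with $\wh{\al'}=\ga_0$ is determined by a choice of distinguished left root and right root on $\ga_0$, possibly with a thorn attached at a distinguished vertex (the thorn arises exactly in the degenerate ``paired'' situations accounted for in the definitions of ${}^l\al$ and $\al^r$). So the fibre over $\ga_0$ consists of all admissible bi-rootings of $\ga_0$. The plan is to show that any two such bi-rootings are joined by an $E$-chain. By Corollary \ref{RonA}, moving the right root while keeping ${}^l\al$ fixed stays within a single $\Rc$-class, and moving the left root while keeping $\al^r$ fixed stays within a single $\Lc$-class. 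Concretely, I would fix a target: first change the right root of $\al$ (via an $\Rc$-step, legitimate exactly when $\lv_\al$ is not the paired degree-$1$ vertex of a thorn, i.e.\ when the $\Rc$-class is nontrivial) so that the left root of $\al$ and the left root of $\be$ become adjacent along the tree, then change the left root (via an $\Lc$-step), and iterate, walking the distinguished edge of $\al$ step by step along the unique path in $\ga_0$ toward the distinguished edge of $\be$.

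The main obstacle, and the step deserving genuine care, is the bookkeeping around thorns and singleton $\Rc$- or $\Lc$-classes. By Corollary \ref{LRsizes} and the remark following it, if $\lv_\al$ is the degree-$1$ endpoint of a thorn then the entire $\Rc$-class of $\al$ is a singleton, so one cannot freely slide the right root; one must first perform an $\Lc$-step to relocate the left root off the thorn before any $\Rc$-movement is possible. I would therefore organize the induction on the distance in $\ga_0$ between the distinguished edge of $\al$ and that of $\be$, handling at each stage whether the current root sits at a thorn. Since $\wh\al=\wh\be$ guarantees $\ga_0$ is connected and the same for both, such a path always exists, and each elementary move is realized by an $\Rc$- or $\Lc$-relation by Corollary \ref{RonA}; concatenating these moves produces the desired $E$-chain, giving $\al\mathrel{(\Rc\vee\Lc)}\be$. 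The degenerate case $\ga_0=\underset{x}{\bullet}$, where the fibre is the single element \tikz[baseline=2.25pt,scale=0.3]{\coordinate [label=below:$x$] (4) at (0,0.5); \coordinate [label=below:$x$] (5) at (2.5,0.5);\draw[double] (4) --(5);\draw (4) node{$\bullet$};\draw (5) node{$\bullet$};}, is trivial and consistent with the earlier observation that these are exactly the one-element connected components.
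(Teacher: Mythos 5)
Your proposal is correct and takes essentially the same route as the paper: the forward direction via Corollary \ref{RonA} and (\ref{hatal}), and the converse by sliding the distinguished edge along the geodesic path in the common underlying tree $\wh\al=\wh\be$, each elementary move being an $\Rc$- or $\Lc$-step justified by Corollary \ref{RonA}. The only organizational difference is that the paper disposes of thorns once at the outset (replacing $\al$ by the thorn-free $\al_1$ re-rooted at the thorn's base, which is $\Rc$- or $\Lc$-related to $\al$), whereas you interleave the thorn bookkeeping with the inductive walk; both handle the degenerate one-vertex case identically.
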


\begin{proof}
The `only if' part follows from Corollary \ref{RonA} and (\ref{hatal}) since $\wh\al=\wh\be$ if $\al\Rc\be$ or $\al\Lc\be$. For the `if' part we may assume that $\wh\al=\wh\be$ has more than one vertex (if $\wh\al=\underset{x}{\bullet}=\wh\be$ then $\al=$\tikz[baseline=2.25pt,scale=0.3]{\coordinate [label=below:$x$] (4) at (0,0.5); \coordinate [label=below:$x$] (5) at (2.5,0.5);\draw[double] (4) --(5);\draw (4) node{$\bullet$};\draw (5) node{$\bullet$};}$=\be$). Suppose that $\al$ has a thorn $\{e,a\}$ with $b$  the other endpoint of $e$. Since $\wh\al\neq\underset{x}{\bullet}$ there exists a vertex $c\in\al$ distinct from $a$ and connected to $b$ by an edge. If we consider now $\al_1=\al\setminus\{e,a\}$ with distinguished vertices $b$ and $c$, then $\wh\al=\wh{\al_1}$ and $\wh{\al_1}$ is just the graph $\al_1$ with its distinguished vertices un-marked. By Corollary \ref{RonA} $\al\Rc\al_1$ or $\al\Lc\al_1$. In other words, we can assume that $\al$ has no thorn. In the same way we can assume also that $\be$ has no thorn. Thus we may assume that $\lv_\al,\rv_\al,\lv_\be,\rv_\be$ are all contained in $\wh\al$. Let $a_0,a_1,\cdots,a_n,a_{n+1}$ be the geodesic path in $\wh\al$ with $a_0$ and $a_1$ the distinguished vertices of $\al$ and $a_n$ and $a_{n+1}$ the distinguished vertices of $\be$. For $i=0,1,\cdots,n$ let $\ga_i$ be the graph $\wh\al$ but with $a_i$ and $a_{i+1}$ as distinguished vertices. Thus $\al=\ga_0$, $\ga_n=\be$, and $\ga_i\Rc\ga_{i+1}$ or $\ga_i\Lc\ga_{i+1}$ by Proposition \ref{RonA}. Hence $\al \mathrel{(\Rc\vee\Lc)}\be$.
\end{proof}

\begin{Cor}
For $\al\in\A(X)$, let $p$ and $q$ be respectively the number of non-paired vertices and the number of edges of $\al$. Then the connected component of $\A(X)$ containing $\al$ has $p+q$ elements.
\end{Cor}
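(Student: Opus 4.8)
The plan is to invoke the previous proposition, which identifies the connected component of $\al$ with the fiber $C:=\{\be\in\A(X)\mid \wh\be=\wh\al\}$, and then to count $C$ directly in terms of the non-rooted tree $\ga:=\wh\al\in\C(X)$. Recall that $\ga$ is thorn-free and reduced for edge-foldings, so no vertex of $\ga$ has two neighbours carrying the same label. Throughout paragraphs two and three I assume $\ga$ has more than one vertex; the singleton case is dealt with at the end.

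First I would determine how an arbitrary $\be\in C$ is built from $\ga$. Since $\be\in\A(X)$ is reduced, all its thorns are essential, hence sit at its distinguished vertices; removing them and un-marking the roots must return $\ga$, so $\be$ arises from $\ga$ by choosing an ordered pair of adjacent distinguished roots and possibly attaching one essential thorn (two thorns would force $\be$ to have only two vertices, excluded here). This yields two families. (a) If $\be$ has no thorn then $\be=\ga$ with the two roots placed on one of its edges; the orientation being forced by the bipartition of $\ga$, these $\be$ are in bijection with the edges of $\ga$. (b) If $\be$ has a thorn, its degree-$1$ tip is a root and its base $w$ lies in $\ga$ on the opposite side, carrying the same label; the resulting $\be$ is reduced precisely when attaching a new same-label neighbour at $w$ produces no edge-folding, that is precisely when $w$ has no same-label neighbour in $\ga$, i.e. when $w$ is non-paired. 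Hence the thorn-elements are in bijection with the non-paired vertices of $\ga$ (a non-paired left vertex receiving a right thorn, a non-paired right vertex a left thorn). I would also verify that distinct choices give non-isomorphic elements of $\A(X)$: this rests on the fact that a reduced labelled bipartite tree has no non-trivial label- and bipartition-preserving automorphism (such a map would fix the centre and there force some vertex to have two equally-labelled neighbours, contradicting edge-folding reducedness), together with the observation that a bipartition-\emph{swapping} automorphism sends a left root to a right root and so is never an isomorphism of bi-rooted trees. Consequently $|C|=m+N$, where $m$ and $N$ denote the numbers of edges and of non-paired vertices of $\ga$.

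It then remains to translate $m+N$ into the invariants $p,q$ of $\al$ itself, since $\al$ is recovered from $\ga$ by re-attaching its own thorns, and (the vertex count being at least two) $\al$ carries at most one thorn. If $\al$ has no thorn then $\al=\ga$ and $p+q=m+N$ trivially. If $\al$ has one thorn with base $w\in\ga$, then by reducedness of $\al$ the vertex $w$ is non-paired in $\ga$ but becomes paired in $\al$, while the added tip is itself paired; thus $\al$ has one more edge and one fewer non-paired vertex than $\ga$, i.e. $q=m+1$ and $p=N-1$, whence again $p+q=m+N=|C|$.

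Finally I would treat the degenerate case $\wh\al=\underset{x}{\bullet}$, in which $\al$ is the double edge on the label $x$: here $C$ is a singleton, $p=0$ and $q=1$, so the formula holds. The step I expect to require the most care is the enumeration of $C$ in the second paragraph, namely pinning down that a thorn may be attached at a vertex exactly when that vertex is non-paired, and confirming that the various rootings and thorn placements produce pairwise distinct elements of $\A(X)$.
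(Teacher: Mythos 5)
Your proposal is correct and takes essentially the same route as the paper: the paper's (very terse) proof likewise rests on the preceding proposition and counts the component as the $q$ rootings at edges plus the $p$ thorn-attachments at non-paired vertices, merely performing this count directly on $\al$ rather than on $\ga=\wh\al$. Your write-up adds the details the paper leaves implicit --- the criterion that a thorn can be attached exactly at a non-paired vertex, the rigidity argument showing the resulting elements are pairwise distinct, and the bookkeeping identity $m+N=p+q$ --- but the underlying decomposition is identical.
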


\begin{proof} Let $\al\in\A(X)$; there are $q$ possibilities to choose a pair of distinguished vertices and $p$ possibilities to add a thorn and then choose another pair of distinguished vertices. All these choices lead to elements connected with $\al$ and all elements connected with $\al$ are thereby obtained. Hence the number of elements in the connected component containing $\al$ is $p+q$.
\end{proof}

For each $\ga\in\C(X)$ let $\A_\ga(X)$ be the connected component of $\A(X)$ represented by $\ga$, that is,
$$\A_\ga(X)=\{\al\in\A(X)\mid \wh\al=\ga\}.$$
The set $\C(X)$ is  partially ordered by reverse inclusion; for $\ga,\de\in\C(X)\setminus X$ define  $\de\unlhd\ga$ if (and only if) $\ga$ is a bipartite subgraph of $\de$ (meaning that the right/left vertices of $\ga$ are right/left vertices of $\de$); let further $\de\unlhd\underset{x}{\bullet}$ if and only if $\de$ contains a singleton subgraph $\underset{x}{\bullet}$ (independently of whether it is a left or right vertex).

\begin{Prop}
Let $\al,\be\in\A(X)$ and $\ga,\de\in\C(X)$.
\begin{enumerate}
\item If $\al\le\be$ then $\wh\al\unlhd\wh\be$.
\item If $\de\unlhd\ga$ then for each $\be\in\A_\ga(X)$ there exists  $\al\in\A_\de(X)$ such that $\al\le\be$.
\end{enumerate}
\end{Prop}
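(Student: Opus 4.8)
The plan is to translate everything into the language of subtrees via Theorem~\ref{leonA}(3): $\al\le\be$ holds precisely when $\be$ is a bi-rooted subtree of $\al$, that is, $\be\subseteq\al$ with identical distinguished roots $\lv_\al=\lv_\be$ and $\rv_\al=\rv_\be$; by contrast $\de\unlhd\ga$ and the passage $\al\mapsto\wh\al$ concern only plain bipartite subgraphs together with the removal of roots and thorns. Once this is done, both assertions reduce to bookkeeping about which vertices and edges survive the operation $\wh{\phantom{\al}}$.

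For (1), I would assume $\al\le\be$, so that $\be\subseteq\al$ with common roots, and show that $\wh\be$ is a bipartite subgraph of $\wh\al$, which is exactly $\wh\al\unlhd\wh\be$. Every vertex or edge of $\be$ that survives into $\wh\be$ already lies in $\al$; the only way it could fail to survive into $\wh\al$ is to be deleted there as an essential thorn, i.e.\ to be a degree-one root of $\al$ adjacent to a vertex of the same label. But such a vertex is also a root of $\be$ and, having degree at most one in the subtree $\be$, is a thorn of $\be$ as well; since $\be$ is reduced this thorn is essential and is therefore already absent from $\wh\be$, a contradiction. The singleton cases are checked directly: if $\wh\be=\underset{x}{\bullet}$ then $\be=\Te(x)$, whose two $x$-labelled roots sit in $\al$, so $\wh\al$ contains a vertex labelled $x$ and again $\wh\al\unlhd\wh\be$.

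For (2), $\ga=\wh\be$ is a bipartite subgraph of $\de$, and I would construct $\al$ by placing $\de$ under the same root data as $\be$. If $\be$ is thorn-free, its roots form an edge of $\ga\subseteq\de$; take that same edge as the roots of $\al:=\de$. If instead $\be$ carries an essential thorn attached at a vertex $v$ of $\ga$, then $v$ is non-paired in $\ga$ (otherwise $\be$ would fold), and two cases arise in $\de$. If $v$ is still non-paired in $\de$, attach the same thorn to $\de$ and keep the roots of $\be$; if $v$ has acquired a new same-label neighbour $w$ in $\de$, take the edge $(v,w)$ as the roots of $\al:=\de$ and map the thorn vertex of $\be$ onto $w$. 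In each case I must verify that $\al$ is reduced (thorn-free, or carrying only an essential thorn, with no folding available precisely because of the paired/non-paired dichotomy), that $\wh\al=\de$, and that $\be$ embeds as a root- and bipartition-preserving subtree of $\al$, so that $\al\le\be$ by Theorem~\ref{leonA}(3). The singleton case $\ga=\underset{x}{\bullet}$, where $\be=\Te(x)$, fits the identical pattern: use an edge of $\de$ between two $x$-labelled vertices if one exists, and otherwise adjoin a thorn at an $x$-vertex of $\de$.

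The main obstacle is exactly the thorn case of (2) in which the attachment vertex $v$ is non-paired in $\ga$ but becomes paired in $\de$. One cannot then simply re-attach the thorn to $\de$, since that would create a foldable configuration and destroy reducedness; instead one must exploit the genuine same-label neighbour $w$ that $\de$ now supplies and use the edge $(v,w)$ to realize the thorn vertex of $\be$. The delicate points will be confirming that such a $w$ exists, that the resulting assignment is an injective embedding carrying the roots of $\be$ to the roots of $\al$, and that the left/right parity classification inherited from $\ga$ is unchanged upon passing to the larger tree $\de$ (so that the embedded copy is genuinely a \emph{bi-rooted} subtree).
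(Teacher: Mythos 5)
Your proposal is correct and follows essentially the same route as the paper: part (1) via Theorem \ref{leonA}(3) (a bi-rooted subtree survives the passage $\al\mapsto\wh\al$ except for thorns, which are thorns of both graphs), and part (2) by the identical case analysis --- reuse the root edge when $\be$ is thorn-free, and in the thorn case either use an existing same-label neighbour of the attachment vertex in $\de$ or, if none exists, adjoin a fresh thorn, the paired/non-paired dichotomy guaranteeing reducedness exactly as in the paper's cases (i) and (ii). The subtlety you flag (that re-attaching a thorn at a vertex that has become paired in $\de$ would create a foldable configuration) is precisely why the paper splits into those two cases, so your plan matches the published argument point for point.
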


\begin{proof}
(1) follows from Theorem \ref{leonA}. For (2)
we may assume that $\de$ has more than one vertex (if $\de$ has only one vertex then $\de=\underset{x}{\bullet}=\ga$ and $\al=$\tikz[baseline=2.25pt,scale=0.3]{\coordinate [label=below:$x$] (4) at (0,0.5); \coordinate [label=below:$x$] (5) at (2.5,0.5);\draw[double] (4) --(5);\draw (4) node{$\bullet$};\draw (5) node{$\bullet$};}$=\be$). Hence let $\de\unlhd\ga$ and $\be\in\A_\ga(X)$ where $\de$ is a graph with more than one vertex. Let $\ga'$ be a subgraph of $\de$ isomorphic with $\ga$. If $\be$ has no thorn then let $(a,b)$ be the edge of $\ga'$ corresponding to the edge $(\lv_\be,\rv_\be)$ of $\be$, and set $\al\in\A_\de(X)$ to be the graph $\de$ with distinguished vertices $a$ and $b$. The graph $\be$ is then a bi-rooted subgraph of $\al$, and $\al\le\be$ by Proposition \ref{leonA}. If $\be$ has a thorn $\{e,a\}$ then let $b$ be the other endpoint of $e$ and let $b'$ be the vertex of $\ga'$ corresponding to $b$. Define (i) $\al$ to be $\de$ with distinguished vertices $a'$ and $b'$ if there exists a vertex $a'\in\de$, adjacent to $b'$ and having the same label as $b'$, or (ii) attach in $\de$ a thorn $\{e',a'\}$ to $b'$ otherwise and let  $\al$ be $\de\cup\{e',a'\}$ with distinguished vertices $a'$ and $b'$. Then by construction $\al\in\A_\de(X)$ and $\be$ is a bi-rooted subtree of $\al$, whence $\al\le\be$ and we have shown (2).
\end{proof}
The arguments in this proof also show that the number of elements $\al$ that are below a given $\be\in \A_\ga(X)$ essentially is the number of distinct realizations of $\ga$ as a subgraph of $\de$. In case $\ga=\underset{x}{\bullet}$ the latter has to be modified slightly: instead of taking the number of all vertices in $\de$ having label $x$ one has to take the number of all pairs of paired vertices having label $x$ and plus the number of all  non-paired vertices with label $x$ (that is, paired vertices only count half).

\section{A basis for the variety $\mathbf{SPS}$}\label{basisSPS}   A transparent combinatorial description for the identities $u\approx v$ which hold in each strict pseudosemilattice has been obtained (via semigroup theoretic methods) by the first author \cite{wpstrict}. In order to formulate it we need yet another combinatorial invariant of words $u\in F_2(X)$: the \emph{$2$-content} $\co_2(u)$ is defined inductively by setting $\co_2(x):=\{(x,x)\}$ for each letter $x\in X$ and letting
$$\co_2(u\wedge v):=\co_2(u)\cup\{(\l(u),\r(v))\}\cup\co_2(v)$$  for arbitrary $u,v\in F_2(X)$ (and we also extend this definition to members of $\B(X)$ in the obvious way). Then, for any $u,v\in F_2(X)$,
\begin{equation}\label{wpsps}\mathbf{SPS}\models u\approx v\mbox{ if and only if }\l(u)=\l(v),\ \co_2(u)=\co_2(v), \r(u)=\r(v).
\end{equation}

Let $\rho_{\mathbf{SPS}}$ be the fully invariant congruence on $\A(X)$ corresponding to $\mathbf{SPS}$ (that is, $\A(X)/\rho_{\mathbf{SPS}}$ is the free strict pseudosemilattice on $X$). Then (\ref{wpsps}) implies that
$$\rho_{\mathbf{SPS}}=\{(\al,\be)\in \A(X)\times \A(X)\mid (\l(\al),\co_2(\al),\r(\al))=(\l(\be),\co_2(\be),\r(\be))\}.$$
In this section we intend to obtain an identity basis for the variety $\mathbf{SPS}$; this is equivalent to find a (nice) subset of $\A(X)\times \A(X)$ ($X$ a countably infinite set) which generates $\rho_{\mathbf{SPS}}$ as a fully invariant congruence.

Let $\rho,\si\subseteq \A(X)\times \A(X)$; we say that $\rho$ \emph{is a consequence of} $\si$ if $\rho$ is contained in the fully invariant congruence generated by $\si$. Two relations $\rho$ and $\si$ are \emph{equivalent} if they generate the same fully invariant congruence. The following observation allows for technical simplifications.
\begin{Lemma} Let $u=u(x_1,\dots,x_n),v=v(x_1,\dots,x_n)\in F_2(X)$ (the notation indicating that only the variables $x_1,\dots,x_n$ occur in $u$ and $v$). Let $x_1',\dots,x_n'$ be new variables distinct from all $x_1,\dots,x_n$. Then, for the class of all idempotent binary algebras,  the two identities
\begin{enumerate}
\item $u(x_1,\dots,x_n)\approx v(x_1,\dots,x_n)$,
\item $u(x_1\wedge x_1',\dots,x_n\wedge x_n')\approx v(x_1\wedge x_1',\dots,x_n\wedge x_n')$
\end{enumerate}
are equivalent.
\end{Lemma}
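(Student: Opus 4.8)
The plan is to prove the two implications separately, showing that each identity is a consequence of the other relative to the idempotent law $x\wedge x\approx x$; since ``equivalent'' here means generating the same fully invariant congruence within the class of idempotent binary algebras, this suffices.

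First I would treat the direction (1)$\Rightarrow$(2), which is the easy one and requires no idempotency at all. Let $\psi$ be the endomorphism of $F_2(X)$ (with $X$ understood to contain the fresh letters $x_1',\dots,x_n'$) determined by $x_i\psi=x_i\wedge x_i'$ for $i=1,\dots,n$ and fixing every other variable. Then identity (2) is literally $u\psi\approx v\psi$, that is, the image of (1) under $\psi$. Because any fully invariant congruence is closed under endomorphisms, the pair corresponding to (2) lies in the fully invariant congruence generated by (1), so (2) is a consequence of (1).

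The reverse direction (2)$\Rightarrow$(1) is where idempotency enters, and I expect it to be the only genuinely substantive step. Apply the endomorphism $\varphi$ determined by $x_i'\varphi=x_i$ and $x_i\varphi=x_i$ to identity (2); this carries each occurrence of $x_i\wedge x_i'$ to $x_i\wedge x_i$, turning (2) into $u(x_1\wedge x_1,\dots,x_n\wedge x_n)\approx v(x_1\wedge x_1,\dots,x_n\wedge x_n)$. Now the idempotent law lets me rewrite each subterm $x_i\wedge x_i$ to $x_i$; since a congruence is compatible with $\wedge$, performing these rewritings on both sides reduces the identity to $u(x_1,\dots,x_n)\approx v(x_1,\dots,x_n)$, which is (1). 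Hence (1) is a consequence of (2) together with $x\wedge x\approx x$.

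The only point requiring care is the bookkeeping: one must confirm that substituting $x_i'\mapsto x_i$ and then collapsing $x_i\wedge x_i$ to $x_i$ is legitimate inside the fully invariant congruence (it is, as such congruences are closed under endomorphisms and compatible with the operation), and one should note that idempotency is indispensable for the backward implication, without it (2) is in general strictly weaker than (1). A cleaner, purely semantic rendering of the same argument is available: an idempotent binary algebra $A$ satisfies (2) if and only if $u^A(a_1\wedge a_1',\dots,a_n\wedge a_n')=v^A(a_1\wedge a_1',\dots,a_n\wedge a_n')$ for all elements $a_i,a_i'$, and specializing $a_i'=a_i$ and using $a_i\wedge a_i=a_i$ recovers (1), while (1) yields (2) by the substitution $a_i\mapsto a_i\wedge a_i'$.
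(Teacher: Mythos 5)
Your proof is correct and follows essentially the same route as the paper's own (very brief) argument: substitute $x_i\wedge x_i'$ for $x_i$ to pass from (1) to (2), and substitute $x_i$ for $x_i'$ followed by an application of the idempotent law $x\wedge x\approx x$ to pass back. The extra bookkeeping you supply about closure of fully invariant congruences under endomorphisms and compatibility with $\wedge$ is exactly what the paper leaves implicit.
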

\begin{proof} We may substitute $x_i\wedge x_i'$ for $x_i$ to obtain the second from the first identity. For the converse, substitute  $x_i$ for  $x_i'$ and  apply the idempotent law $x\wedge x\approx x$.
\end{proof}

The words of the form $w(x_1\wedge x_1',\dots,x_n\wedge x_n')$ have disjoint left and right contents. In particular, each pair $(\al,\be)\in \A(X)\times \A(X)$ is equivalent to a pair $(\al',\be')$ for which $\co_l(\al')\cap\co_r(\al')=\emptyset=\co_l(\be')\cap \co_r(\be')$. By Corollary \ref{comA}, for each pair $(\al,\be)\in \rho_{\mathbf{SPS}}$ the equality $\al\wedge\be=\be\wedge \al$ holds and hence the relations $\al\ge\al\wedge\be\le\be$ are satisfied. Since all involved graphs are finite, for any $\al,\be$ with $\al\wedge\be=\be\wedge \al$ the intervals $[\al\wedge\be,\al]$ and $[\al\wedge\be,\be]$ are finite as well. Hence there exist maximal chains
$$\al=\al_0\succ\al_1\succ\cdots\succ\al_{m-1}\succ\al_m=\al\wedge\be$$ and $$\al\wedge\be=\be_n\prec\be_{n-1}\prec\cdots\be_1\prec\be_0=\be$$
where $\al_i\succ \al_{i+1}$ means that $\al_i$ covers $\al_{i+1}$ while $\be_j\prec \be_{j-1}$ means that $\be_j$ is covered by $\be_{j-1}$. In this situation we have for any congruence $\rho$ on $\A(X)$ that $\al\mathrel{\rho}\be$ if and only if $\al\mathrel{\rho}\al\wedge\be\mathrel{\rho}\be$ and the latter is equivalent to $\al_i\mathrel{\rho}\al_{i+1}$ and $\be_j\mathrel{\rho}\be_{j-1}$ for all $i$ and $j$. Altogether we may state that each pair $(\al,\be)$ (with $\al\wedge\be=\be\wedge \al$) is equivalent to a finite set of pairs $(\al_i,\al_{i+1})$ with $\al_i\succ\al_{i+1}$. Note that if $\al\succ\be$ and $\be$ has disjoint left and right contents then there exists exactly one  vertex in $\be$ that is not in $\al$ (this vertex has degree $1$). We are going to consider a special sort of such covering pairs. A pair $(\al,\be)$ of commuting elements of $\A(X)$ is called \emph{elementary} if
\begin{enumerate}
\item $\co_2(\al)=\co_2(\be)$ and the left and right contents of $\be$ (and therefore of $\al$) are disjoint,
\item $\al$ covers $\be$,
\item the unique degree $1$ vertex in $\be\setminus\al$ is adjacent to a distinguished vertex of $\be$.
\end{enumerate}
Note that if $\al\in\A(X)$ has disjoint left and right contents then the replacement of the pair of distinguished  vertices by any other pair of adjacent vertices leads to another member of $\A(X)$ (since no such change will lead to a non-essential thorn). The next lemma in combination with all preceding remarks  essentially shows that elementary pairs will be sufficient to generate the fully invariant congruence $\rho_{\mathbf{SPS}}$.
\begin{Lemma}\label{changingroots} Let $\be<\al$ and suppose that the right and the left contents of $\be$ are disjoint; let $\be'$ and $\al'$ be obtained from $\be$ and $\al$ by moving the distinguished vertices (again to the same vertices for $\be'$ and $\al'$). Then the pairs $(\al,\be)$ and $(\al',\be')$ are equivalent. It follows that each covering pair $(\al,\be)\in\rho_{\mathbf{SPS}}$ where $\be$  has disjoint left and right contents is equivalent to an elementary pair.
\end{Lemma}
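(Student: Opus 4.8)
The plan is to prove the first assertion about re-rooting and then read off the statement about elementary pairs.

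\emph{Setup and reduction.} By Theorem \ref{leonA}(3), $\be\le\al$ means that $\al$ and $\be$ share the same distinguished left and right vertices while the graph of $\al$ is a subgraph of that of $\be$. Since $\be$ has disjoint left and right contents, so does $\al$, and (as remarked just before the lemma) re-rooting both graphs at any common edge $(a,b)$ of $\al$ creates no non-essential thorn; thus $\al',\be'\in\A(X)$ are well defined, share the distinguished edge $(a,b)$, and $\al'$ is again a bi-rooted subtree of $\be'$, so $\be'<\al'$. As ``being equivalent'' is transitive, I would reduce to a single step: moving one distinguished vertex along one edge of $\al$ while fixing the other. Walking along the unique path in the tree $\al$ from $(\lv_\al,\rv_\al)$ to $(a,b)$, each step replaces one endpoint of the distinguished edge by an adjacent vertex, so chaining finitely many such steps reaches $(\al',\be')$.

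\emph{The single step.} Suppose the common distinguished edge is $(p,q)$ and we move the right root from $q$ to $q'$, where $(p,q')$ is an edge of $\al$; write $\al_1,\be_1$ for the results, noting $\al\Rc\al_1$ and $\be\Rc\be_1$ by Corollary \ref{RonA}. I would apply the unary polynomial $T(\zeta)=\zeta\wedge\al_1$. By the product formula \eqref{productinFPSviafoldings}, $\al\wedge\al_1$ is obtained by gluing ${}^l\al$ and $\al_1^r$ along a new edge joining $p$ to the copy of $q'$; since $(p,q')$ already lies in $\al$, a folding argument identical to Lemma \ref{idempotency} collapses the attached copy and gives $\al\wedge\al_1=\al_1$. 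The same computation with $\be$ in place of $\al$, where the adjoined copy of $\al_1$ folds into the subtree $\al\subseteq\be$ (Lemma \ref{foldingsubstitution}), gives $\be\wedge\al_1=\be_1$. Hence $T(\al)=\al_1$ and $T(\be)=\be_1$, so $(\al_1,\be_1)$ is a consequence of $(\al,\be)$; applying $T'(\zeta)=\zeta\wedge\al$ yields $T'(\al_1)=\al$ and $T'(\be_1)=\be$ by the same foldings, so the two pairs are equivalent. A left-root step is dual: multiply on the left by the moved element and use $\Lc$ and the left root in place of $\Rc$ and the right root. Chaining the steps proves the first assertion.

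\emph{Main obstacle.} The crux is the pair of folding identities $\al\wedge\al_1=\al_1$ and $\be\wedge\al_1=\be_1$. They hold precisely because the moved edge $(p,q')$ is a genuine edge of $\al$, which is what lets the adjoined copy of $\al_1$ fold completely onto the corresponding subtree. A one-shot re-rooting directly to a distant edge $(a,b)$ would fail here, since then the gluing edge need not lie in $\al$ and the copy would not collapse; this is exactly what forces the reduction to single steps.

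\emph{Elementary pairs.} For the final statement, let $(\al,\be)\in\rho_{\mathbf{SPS}}$ be a covering pair with $\be$ of disjoint contents. By \eqref{wpsps} we have $\co_2(\al)=\co_2(\be)$, which is condition (1), and condition (2) is the covering hypothesis. Let $w$ be the unique degree-$1$ vertex of $\be$ not lying in $\al$, and let $b\in\al$ be its neighbor. As $\al$ is non-trivial, $b$ has some neighbor $a$ in $\al$, so $(a,b)$ is an edge of $\al$; re-root both $\al$ and $\be$ at $(a,b)$. By the first part the pair $(\al',\be')$ is equivalent to $(\al,\be)$. Re-rooting leaves the underlying graphs — hence $\co_2$, the disjointness of the contents, and, via the obvious bijection between the intervals $[\be,\al]$ and $[\be',\al']$, the covering relation — unchanged, so conditions (1) and (2) persist. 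Finally $w$ is now adjacent to the distinguished vertex $b$ of $\be'$, which is condition (3); thus $(\al',\be')$ is an elementary pair equivalent to $(\al,\be)$.
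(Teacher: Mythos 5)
Your proposal is correct and follows essentially the same route as the paper's proof: both reduce the general re-rooting to a chain of single-root moves along the tree path joining the two distinguished edges, and realize each such move inside the fully invariant congruence by a multiplication followed by edge-foldings, finishing the elementary-pair claim by choosing the new roots adjacent to the unique vertex of $\be\setminus\al$. The only difference is cosmetic: the paper multiplies by the two-vertex graphs $\Te(x\wedge z)$ and $\Te(x\wedge y)$ determined by the labels of the vertices involved in the move, whereas you multiply by the re-rooted graphs $\al_1$ and $\al$ themselves; both collapse via the same Lemma \ref{idempotency}-style folding argument.
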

\begin{proof} Let $(\lv,\rv)$ be the pair of distinguished vertices of $\al$ and $\be$ and suppose as case (i)  that the distinguished right vertex $\rv$ is changed to $\rv'$ to obtain $\al'$ and $\be'$ (the left root being unchanged). Suppose that for the labels of the involved vertices we have
$x=\cb_\lv$, $y=\cb_\rv$ and $z=\cb_{\rv'}$. Let $\ga=$\tikz[baseline=2.25pt,scale=0.3]{\coordinate [label=below:$x$] (4) at (0,0.5); \coordinate [label=below:$z$] (5) at (2.5,0.5);\draw[double] (4) --(5);\draw (4) node{$\bullet$};\draw (5) node{$\bullet$};} and $\de=$\tikz[baseline=2.25pt,scale=0.3]{\coordinate [label=below:$x$] (4) at (0,0.5); \coordinate [label=below:$y$] (5) at (2.5,0.5);\draw[double] (4) --(5);\draw (4) node{$\bullet$};\draw (5) node{$\bullet$};}; then
$$\al\wedge \ga=\al',\ \be\wedge\ga=\be'\mbox{ and } \al'\wedge\de=\al,\ \be'\wedge \de=\be$$
and hence $(\al,\be)$ and $(\al',\be')$ are equivalent.

If, in case (ii),  $\lv$ is changed to $\lv'$ and $\rv$ is kept unchanged then for $z=\cb_{\lv'}$ and $\ga=$\tikz[baseline=2.25pt,scale=0.3]{\coordinate [label=below:$z$] (4) at (0,0.5); \coordinate [label=below:$y$] (5) at (2.5,0.5);\draw[double] (4) --(5);\draw (4) node{$\bullet$};\draw (5) node{$\bullet$};} and keeping the rest of the notation the same we obtain
$$\ga\wedge \al=\al',\ \ga\wedge\be=\be'\mbox{ and } \de\wedge\al'=\al,\ \de\wedge\be'=\be$$
so that again $(\al,\be)$ and $(\al',\be')$ are equivalent.

Finally, if both vertices $(\lv,\rv)$ are changed to $(\lv',\rv')$ to get $\al'$ and $\be'$, respectively, then we choose the minimal subtree containing both edges. This subtree gives rise to a sequence of changes, alternately  of types (i) and (ii), which eventually transform $\al$ to $\al'$ and $\be$ to $\be'$. In each of these transformations one obtains a pair that is equivalent to the preceding one. Altogether, the pairs $(\al,\be)$ and $(\al',\be')$ are equivalent.

In order to prove the last statement of the lemma we just have to choose the distinguished vertices of $\al'$ and $\be'$ appropriately: the unique vertex in $\be\setminus\al$ must be connected in $\be$ by an edge to one of the two chosen distinguished vertices of $\be'$.
\end{proof}
Summing up the results of this section so far we get the first main result.
\begin{Thm}\label{reductiontoelementary} \begin{enumerate}
\item Each fully invariant congruence $\rho$ on $\A(X)$  contained in $\rho_{\mathbf{SPS}}$ is generated by elementary pairs. \item The fully invariant congruence $\rho_{\mathbf{SPS}}$ is generated by all elementary pairs.
\end{enumerate}
\end{Thm}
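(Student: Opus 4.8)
The plan is to derive (2) from (1). Applying (1) to the congruence $\rho=\rho_{\mathbf{SPS}}$ itself shows that $\rho_{\mathbf{SPS}}$ is generated by those elementary pairs that it contains, so it remains only to verify that \emph{every} elementary pair already lies in $\rho_{\mathbf{SPS}}$. An elementary pair $(\al,\be)$ consists of commuting elements with $\co_2(\al)=\co_2(\be)$; by Corollary \ref{comA} commutativity forces $(\l(\al),\r(\al))=(\l(\be),\r(\be))$, and together with the description (\ref{wpsps}) of $\rho_{\mathbf{SPS}}$ this yields $(\al,\be)\in\rho_{\mathbf{SPS}}$. Hence the whole weight of the proof rests on (1), and I shall concentrate on it.

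So I fix a fully invariant congruence $\rho\subseteq\rho_{\mathbf{SPS}}$ and write $E_\rho$ for the set of elementary pairs lying in $\rho$. Since $E_\rho\subseteq\rho$, the fully invariant congruence generated by $E_\rho$ is contained in $\rho$; the substance is the reverse inclusion. Thus I take an arbitrary $(\al,\be)\in\rho$ and aim to show it is a consequence of $E_\rho$. First I would invoke the Lemma opening this section, replacing $(\al,\be)$ by its image under the substitution $x_i\mapsto x_i\wedge x_i'$: this image again lies in $\rho$ by full invariance, it is equivalent to $(\al,\be)$, and its two components have disjoint left and right contents. I may therefore assume from the start that $\al$ and $\be$ have disjoint contents.

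Next, since $(\al,\be)\in\rho_{\mathbf{SPS}}$, Corollary \ref{comA} gives $\al\wedge\be=\be\wedge\al=:\mu$, so that $\al\ge\mu\le\be$; because $\rho$ is a congruence, $(\al,\mu)$ and $(\mu,\be)$ lie in $\rho$. As the intervals $[\mu,\al]$ and $[\mu,\be]$ are finite I choose maximal chains $\al=\al_0\succ\cdots\succ\al_m=\mu$ and $\mu=\be_n\prec\cdots\prec\be_0=\be$. For each chain element one has $\mu\le\al_i\le\al$, whence $\al_i\wedge\al=\al_i$ and $\al_i\wedge\mu=\mu$; meeting the pair $(\al,\mu)\in\rho$ on the left by $\al_i$ then gives $(\al_i,\mu)\in\rho$, and by transitivity every covering pair $(\al_i,\al_{i+1})$ belongs to $\rho$ (and likewise each $(\be_{j-1},\be_j)$). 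Moreover every chain element is, by Theorem \ref{leonA}, a bi-rooted subgraph of $\mu$ and so inherits the disjointness of left and right contents; in particular the lower component of each covering pair has disjoint contents.

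The last step is to pass from covering pairs to elementary ones. Each covering pair $(\al_i,\al_{i+1})\in\rho$ satisfies the hypotheses of Lemma \ref{changingroots}, which therefore produces an equivalent elementary pair. The point I must watch — and the one I regard as the main obstacle — is that all these reductions stay inside the \emph{prescribed} $\rho$ rather than merely inside $\rho_{\mathbf{SPS}}$; this is precisely what upgrades ``contained in $\rho$'' to ``equal to $\rho$''. It holds because each move of Lemma \ref{changingroots} is realized by meeting with a fixed one-edge generator (the graphs $\ga,\de$ of that proof), so the resulting elementary pair is obtained from $(\al_i,\al_{i+1})$ by congruence operations and hence lies in $\rho$, i.e. in $E_\rho$. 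Consequently each covering pair is a consequence of a member of $E_\rho$; assembling the two chains shows $(\al,\be)$ is a consequence of $E_\rho$, which establishes (1) and, by the first paragraph, (2).
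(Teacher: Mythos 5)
Your proposal is correct and follows essentially the same route as the paper, whose proof of Theorem \ref{reductiontoelementary} is precisely the summing-up of the section's preceding results: the substitution lemma $x_i\mapsto x_i\wedge x_i'$ to ensure disjoint left/right contents, the decomposition of a pair $(\al,\be)\in\rho\subseteq\rho_{\mathbf{SPS}}$ into covering pairs along maximal chains through $\al\wedge\be$, and Lemma \ref{changingroots} to replace covering pairs by elementary ones. Your additional care in checking that every reduction step (full invariance, left meets with chain elements, and the root-moving meets of Lemma \ref{changingroots}) stays inside the given congruence $\rho$ rather than merely inside $\rho_{\mathbf{SPS}}$, and your derivation of (2) from (1) via Corollary \ref{comA} and (\ref{wpsps}), simply make explicit what the paper leaves implicit.
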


Next we show that the set of all elementary pairs is a consequence
of a set of very special elementary pairs. Let $(\al,\be)$ be an elementary pair and let $a$ be the unique vertex in $\be\setminus \al$; suppose its label is $\cb_a=z$. Let $\lv,\rv$ denote the distinguished left and right vertices of $\al$ (and $\be$) and suppose their labels are $x$ and $y$. Note that $x,y,z$ are pairwise distinct. The subtree of $\be$ spanned by the vertices $a,\lv,\rv$ is of the form either (i) \tikz[baseline=0pt,scale=0.3]{\coordinate [label=left:$x$] (1) at (0,0.5);\coordinate [label=right:$z$] (2) at (2,1);
\coordinate[label=right:$y$] (3) at (2,0);  \draw (2,1) -- (0,0.5);\draw[double] (0,0.5) --(2,0); \draw (0,0.5) node{$\bullet$}; \draw (2,1) node{$\bullet$}; \draw (2,0) node{$\bullet$};} or (ii) \tikz[baseline=0pt,scale=0.3]{\coordinate [label=left:$x$] (1) at (0,0);\coordinate [label=left:$z$] (2) at (0,1);
\coordinate[label=right:$y$] (3) at (2,0.5);  \draw (0,1) -- (2,0.5);\draw[double] (0,0) --(2,0.5); \draw (2,0.5) node{$\bullet$}; \draw (0,1) node{$\bullet$}; \draw (0,0) node{$\bullet$};}. Let $\al'$ be a minimal subtree of $\al$ containing $\lv,\rv$ and a pair $(c,d)$ of adjacent vertices with pair of labels $(x,z)$ in case (i) and $(z,y)$ in case (ii) (and such that $\lv$ and $\rv$ are the distinguished vertices of $\al'$). Let $\be'=\al'\cup\{a,e\}$ where $e=(\lv,a)$ in case (i) and $e=(a,\rv)$ in case (ii). Then $(\al',\be')$ is also an elementary pair. A straightforward calculation shows the next result.
\begin{Lemma}\label{reducingelementarypairs} Let $\al,\be,\al',\be'$ be defined as before. Then $\al\wedge \al'=\al$ and $\al\wedge\be'=\be$. In particular, $(\al,\be)$ is a consequence of $(\al',\be')$.
\end{Lemma}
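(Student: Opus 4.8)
The plan is to verify the two product identities $\al\wedge\al'=\al$ and $\al\wedge\be'=\be$ separately, and then to read off the consequence statement from the compatibility of any fully invariant congruence with $\wedge$. Throughout I would use that all graphs in sight have disjoint left and right contents (the labels $x,y,z$ being pairwise distinct), so that none of them carries a thorn; in particular ${}^l\ga={}^L\ga$ and $\ga^r=\ga^R$ for every $\ga$ involved.

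The identity $\al\wedge\al'=\al$ needs no folding computation. By construction $\al'$ is a subtree of $\al$ which retains $\lv$ and $\rv$ as its distinguished vertices, hence $\al'$ is a bi-rooted subtree of $\al$. By Theorem \ref{leonA}(3) this says exactly $\al\le\al'$, and since $\le={\wr}\cap{\wl}$ it gives in particular $\al\wl\al'$, i.e.\ $\al\wedge\al'=\al$.

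For $\al\wedge\be'=\be$ I would compute the product via the folding description (\ref{productinFPSviafoldings}),
$$\al\wedge\be'=\bigl({}^l\al\cup\{(\lv_\al,\rv_{\be'})\}\cup{\be'}^r\bigr)\wt{\ }.$$
The decisive point is that $\be'=\al'\cup\{a,e\}$ carries a copy of the subtree $\al'\subseteq\al$. First the new connecting edge $(\lv_\al,\rv_{\be'})$ folds with the distinguished edge $(\lv,\rv)$ of $\al$ — both join $\lv$ to a right vertex labelled $y$ — thereby identifying the copy of $\rv$ sitting in ${\be'}^r$ with $\rv$ itself. From there the copy of $\al'$ inside ${\be'}^r$ folds vertex by vertex and edge by edge onto the subtree $\al'$ of $\al$, exactly as in the proofs of Lemma \ref{idempotency} and Lemma \ref{foldingsubstitution}. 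The only vertex of ${\be'}^r$ not consumed by this process is the copy of $a$, which remains attached to $\lv$ in case (i) and to $\rv$ in case (ii). Since $\be=\al\cup\{a,e\}$ is reduced, this copy of $a$ has no equally labelled neighbour with which to fold and, as $z\notin\{x,y\}$, produces no thorn; hence the fully reduced outcome is precisely $\be$, with distinguished vertices $\lv,\rv$.

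The consequence statement is then immediate. Let $\theta$ denote the fully invariant congruence generated by $(\al',\be')$. Being a congruence, $\theta$ is compatible with $\wedge$, so from $(\al',\be')\in\theta$ we obtain $(\al\wedge\al',\al\wedge\be')\in\theta$; by the two identities just established this pair equals $(\al,\be)$, whence $(\al,\be)\in\theta$, i.e.\ $(\al,\be)$ is a consequence of $(\al',\be')$. I expect the only genuinely delicate step to be the folding argument for $\al\wedge\be'=\be$: one must argue that the copy of $\al'$ inside ${\be'}^r$ folds onto the designated subtree $\al'$ of $\al$ and nowhere else, and that the extra vertex $a$ survives intact. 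Both points rely on $\al$ — and hence $\be$ — being reduced, which forces the neighbours of each vertex to carry pairwise distinct labels and so makes the fold deterministic; this is the one place where reducedness is genuinely used.
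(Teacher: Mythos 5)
Your proof is correct and is precisely the ``straightforward calculation'' that the paper omits: the paper gives no argument for this lemma beyond that phrase, and your verification---$\al\wedge\al'=\al$ via Theorem~\ref{leonA}(3), and $\al\wedge\be'=\be$ via formula (\ref{productinFPSviafoldings}) together with the folding arguments of Lemmas~\ref{idempotency} and \ref{foldingsubstitution}, with reducedness of $\be$ guaranteeing that the extra vertex $a$ neither folds away nor becomes a removable thorn---uses exactly the machinery the paper sets up for this purpose. Your concluding step (compatibility of the fully invariant congruence generated by $(\al',\be')$ with left multiplication by $\al$) is the intended reading of ``consequence,'' so nothing is missing.
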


For the graph $\be'$ in the situation described above, there are four possible types, namely, for some
$n\geq 2$,

\centerline{
\tikz[baseline=15pt,scale=0.5]{
\coordinate[label=above:$x_{2n}$] (1) at (0,2);
\coordinate[label=below:$x_1$] (2) at (1,0);
\coordinate[label=above:$x_2$] (3) at (2,2);
\coordinate[label=below:$x_3$] (4) at (3,0);
\coordinate[label=above:$x_{2n-2}$] (5) at (5,2);
\coordinate[label=below:$x_{2n-1}$] (6) at (6,0);
\coordinate[label=above:$x_{2n}$] (7) at (7,2);
\coordinate[label=below:$x_{1}$] (8) at (8,0);
\draw (4,1)node{$\dots$};
\draw (1) node {$\bullet$};
\draw (2) node {$\bullet$};
\draw (3) node {$\bullet$};
\draw (4) node {$\bullet$};
\draw (5) node {$\bullet$};
\draw (6) node {$\bullet$};
\draw (7) node {$\bullet$};
\draw (8) node {$\bullet$};
\draw (4)--(3.2,0.7);
\draw (4.8,1.3)--(5);
\draw (1)--(2);
\draw[double] (2)--(3);
\draw (3)--(4);
\draw (5)--(6)--(7)--(8);}
or
\tikz[baseline=15pt,scale=0.5]{
\coordinate[label=above:$x_{2n}$] (1) at (0,2);
\coordinate[label=below:$x_1$] (2) at (1,0);
\coordinate[label=above:$x_2$] (3) at (2,2);
\coordinate[label=below:$x_3$] (4) at (3,0);
\coordinate[label=above:$x_{2n-2}$] (5) at (5,2);
\coordinate[label=below:$x_{1}$] (6) at (6,0);
\coordinate[label=above:$x_{2n}$] (7) at (7,2);
\draw (4,1)node{$\dots$};
\draw (1) node {$\bullet$};
\draw (2) node {$\bullet$};
\draw (3) node {$\bullet$};
\draw (4) node {$\bullet$};
\draw (5) node {$\bullet$};
\draw (6) node {$\bullet$};
\draw (7) node {$\bullet$};
\draw (4)--(3.2,0.7);
\draw (4.8,1.3)--(5);
\draw (1)--(2);
\draw[double] (2)--(3);
\draw (3)--(4);
\draw (5)--(6)--(7);}
}
{\parindent=0pt with $(x_1,x_{2n})=(x,z)$ in} case (i) and

\centerline{
\tikz[baseline=15pt,scale=0.5]{
\coordinate[label=below:$x_{2n}$] (1) at (0,0);
\coordinate[label=above:$x_1$] (2) at (1,2);
\coordinate[label=below:$x_2$] (3) at (2,0);
\coordinate[label=above:$x_3$] (4) at (3,2);
\coordinate[label=below:$x_{2n-2}$] (5) at (5,0);
\coordinate[label=above:$x_{2n-1}$] (6) at (6,2);
\coordinate[label=below:$x_{2n}$] (7) at (7,0);
\coordinate[label=above:$x_{1}$] (8) at (8,2);
\draw (4,1)node{$\dots$};
\draw (1) node {$\bullet$};
\draw (2) node {$\bullet$};
\draw (3) node {$\bullet$};
\draw (4) node {$\bullet$};
\draw (5) node {$\bullet$};
\draw (6) node {$\bullet$};
\draw (7) node {$\bullet$};
\draw (8) node {$\bullet$};
\draw (4)--(3.2,1.3);
\draw (4.8,0.7)--(5);
\draw (1)--(2);
\draw[double] (2)--(3);
\draw (3)--(4);
\draw (5)--(6)--(7)--(8);}
or
\tikz[baseline=15pt,scale=0.5]{
\coordinate[label=below:$x_{2n}$] (1) at (0,0);
\coordinate[label=above:$x_1$] (2) at (1,2);
\coordinate[label=below:$x_2$] (3) at (2,0);
\coordinate[label=above:$x_3$] (4) at (3,2);
\coordinate[label=below:$x_{2n-2}$] (5) at (5,0);
\coordinate[label=above:$x_{1}$] (6) at (6,2);
\coordinate[label=below:$x_{2n}$] (7) at (7,0);
\draw (4,1)node{$\dots$};
\draw (1) node {$\bullet$};
\draw (2) node {$\bullet$};
\draw (3) node {$\bullet$};
\draw (4) node {$\bullet$};
\draw (5) node {$\bullet$};
\draw (6) node {$\bullet$};
\draw (7) node {$\bullet$};
\draw (4)--(3.2,1.3);
\draw (4.8,0.7)--(5);
\draw (1)--(2);
\draw[double] (2)--(3);
\draw (3)--(4);
\draw (5)--(6)--(7);}
}
{\parindent=0pt  with $(x_{2n},x_1)=(z,y)$ in case (ii).} In addition, the labels satisfy $x_i\ne x_{i+2}$ for all $i$ and
$$\{x_1,x_3,\dots,x_{2n-1}\}\cap\{x_2,x_4,\dots,x_{2n}\}=\emptyset.$$

Suppose now that the variables $x_1,\dots,x_{2n}$ ($n\geq 2$) are pairwise distinct and denote the resulting graphs (in this order) by $\be_n,\ga_n,\mu_n,\nu_n$, respectively. Moreover, denote by $\al_n,\ga_n',\mu_n',\nu_n'$, respectively, the trees obtained by removing from the respective graphs the degree $1$ vertex adjacent to a distinguished vertex (and the corresponding edge). Then all the pairs
\begin{equation}\label{elementarypairs}
(\al_n,\be_n),(\ga_n',\ga_n),(\mu_n',\mu_n),(\nu_n',\nu_n)
\end{equation}
are elementary and Lemma \ref{reducingelementarypairs} confirms that each elementary pair is a consequence of one of these pairs. Moreover, an easy observation is the following.
\begin{Lemma}\label{nplus1impliesn} For each $n\ge 2$,
\begin{enumerate}
\item $(\al_n,\be_n)$ is a consequence of $(\al_{n+1},\be_{n+1})$,
\item $(\ga_n',\ga_n)$ is a consequence of $(\ga_{n+1}',\ga_{n+1})$,
\item $(\mu_n',\mu_n)$ is a consequence of $(\mu_{n+1}',\mu_{n+1})$,
\item $(\nu_n',\nu_n)$ is a consequence of $(\nu_{n+1},\nu_{n+1})$.
\end{enumerate}
\end{Lemma}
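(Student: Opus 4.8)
The plan is to exploit the defining feature of a fully invariant congruence: it is closed under all endomorphisms of $\A(X)$, so the fully invariant congruence generated by a single pair $(\al_{n+1},\be_{n+1})$ automatically contains $(\al_{n+1}\psi,\be_{n+1}\psi)$ for every endomorphism $\psi$. It therefore suffices, for each of the four families, to exhibit an endomorphism $\psi$ --- in fact a mere renaming of the variables --- under which the $(n{+}1)$-st pair is carried onto the $n$-th pair. Once such a $\psi$ is found, each item of the lemma is immediate.

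I would settle $(\al_n,\be_n)$ first and obtain the rest by symmetry. Recall that $\be_n$ is the path realisation of the $2n$-cycle $x_1x_2\cdots x_{2n}x_1$ carrying one extra redundant leading vertex, and $\al_n$ is obtained from it by deleting that leading vertex. Define $\psi$ to fix $x_1,\dots,x_{2n}$ and to set $x_{2n+1}\mapsto x_{2n-1}$ and $x_{2n+2}\mapsto x_{2n}$. Applying $\psi$ relabels the tail of the path underlying $\be_{n+1}$, whose last vertices then read $\dots,x_{2n-1},x_{2n},x_{2n-1},x_{2n},x_1$. The right vertex labelled $x_{2n}$ is now adjacent to two left vertices both labelled $x_{2n-1}$, so a first edge-folding identifies them; the resulting vertex is adjacent to two right vertices both labelled $x_{2n}$, and a second edge-folding identifies these while reattaching the final $x_1$. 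The tail thus collapses to $\dots,x_{2n-1},x_{2n},x_1$, whereas the head --- including the distinguished edge $x_1x_2$, which $\psi$ leaves untouched --- is unchanged. Hence $\be_{n+1}\psi$ reduces to $\be_n$, and the very same two foldings applied to $\al_{n+1}\psi$ yield $\al_n$. Consequently $(\al_n,\be_n)=(\al_{n+1}\psi,\be_{n+1}\psi)$ is a consequence of $(\al_{n+1},\be_{n+1})$.

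For the three remaining families I would argue in the same fashion. The graphs $\mu_n,\nu_n$ of case (ii) are the left--right duals of $\be_n,\ga_n$, so they are disposed of by the endomorphism dual to $\psi$ (swapping the roles of left and right vertices). For $(\ga_n',\ga_n)$ one uses the renaming adapted to the way $\ga_{n+1}$ wraps its cycle, again identifying the two newly introduced variables with the pair adjacent to the closing edge; the fold-reduction is identical in spirit, producing $\ga_n$ from $\ga_{n+1}\psi$ and $\ga_n'$ from $\ga_{n+1}'\psi$.

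The only genuine work is the bookkeeping of the edge-folding reduction: one must verify that the two successive foldings collapse exactly the duplicated tail, create no thorn or further fold elsewhere, and leave the distinguished pair $(x_1,x_2)$ in place, so that $\be_{n+1}\psi$ reduces to \emph{precisely} $\be_n$ rather than to a merely $\rho_{\mathbf{SPS}}$-equivalent graph. Since $\be_n$ is already reduced and $\psi$ alters only the wrap-around region, this check is routine; the uniform shape of the four graphs together with the case (i)/(ii) duality then lets a single explicit computation settle all four statements.
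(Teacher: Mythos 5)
Your proposal is correct and takes essentially the same approach as the paper: the paper's proof likewise exhibits a variable-renaming endomorphism (there $x_{2n+2}\mapsto x_{2n},\ x_{2n+1}\mapsto x_1$ for cases (1) and (3), and $x_{2n+2}\mapsto x_{2n},\ x_{2n-1}\mapsto x_1$ for cases (2) and (4)), which after the same two edge-foldings you describe carries the $(n+1)$-st pair onto the $n$-th pair. Your alternative substitution $x_{2n+1}\mapsto x_{2n-1},\ x_{2n+2}\mapsto x_{2n}$ works equally well, and invoking left--right duality for cases (3) and (4) is a harmless shortcut for the paper's direct reuse of the case (1) and (2) substitutions.
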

\begin{proof} In cases (1) and (3) consider the substitution $x_{2n+2}\mapsto x_{2n},\ x_{2n+1}\mapsto x_1$, in cases (2) and (4) the substitution $x_{2n+2}\mapsto x_{2n},\ x_{2n-1}\mapsto x_1$ (and, in all cases, $x\mapsto x$ for all other $x$). The endomorphism induced by this substitution maps the respective $(n+1)$st pair to the $n$th pair.
\end{proof}

\begin{Lemma}\label{trivialreduction} For each $n\geq 2$, $(\ga_n',\ga_n)$ is a consequence of $(\al_n,\be_n)$ and $(\nu_n',\nu_n)$ is a consequence of $(\mu_n',\mu_n)$.
\end{Lemma}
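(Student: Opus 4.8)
The plan is to realize both reductions by a single relabelling and to exploit the fact that ``consequence'' is defined through fully invariant congruences. Concretely, let $\psi$ be the endomorphism of $F_2(X)$ (equivalently, the induced endomorphism of $\A(X)$) that sends the variable $x_{2n-1}$ to $x_1$ and fixes every other variable. Since the fully invariant congruence generated by $(\al_n,\be_n)$ is by definition closed under endomorphisms, the pair $(\al_n\psi,\be_n\psi)$ is automatically a consequence of $(\al_n,\be_n)$, and likewise $(\mu_n'\psi,\mu_n\psi)$ is a consequence of $(\mu_n',\mu_n)$. It therefore suffices to prove the four equalities $\al_n\psi=\ga_n'$, $\be_n\psi=\ga_n$, $\mu_n'\psi=\nu_n'$ and $\mu_n\psi=\nu_n$ in $\A(X)$, that is, after reduction.

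I would verify these equalities by means of the graphical description of $\De(u\psi)$ from Section \ref{algebraB(X)} together with the edge-foldings computing the reduced form. The variable $x_{2n-1}$ occurs in $\be_n$ exactly once, namely at the unique vertex adjacent to the terminal edge carrying the pair of labels $(x_1,x_{2n})$; the graphs $\ga_n$ and $\nu_n$ are obtained from $\be_n$ and $\mu_n$ precisely by relabelling $x_{2n-1}$ as $x_1$ and deleting the terminal degree-$1$ vertex. Because $\psi$ replaces a single variable by another single variable, $\De(\be_n)\psi$ is just $\be_n$ with that one label changed to $x_1$; the vertex labelled $x_{2n}$ then has two neighbours both labelled $x_1$, so a single edge-folding identifies them, the terminal degree-$1$ vertex is absorbed, and the tail becomes exactly that of $\ga_n$. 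The very same relabelling applied to $\al_n$ and $\mu_n'$, which share this tail, yields $\ga_n'$ and $\nu_n'$, and the dual bookkeeping (case (ii)) handles $\mu_n\psi=\nu_n$.

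The main point requiring care — and the only genuine obstacle — is to check that this single edge-folding is the \emph{only} reduction triggered by $\psi$, so that $\ol{\be_n\psi}$ is genuinely $\ga_n$ and not a further collapse. For $n\ge 3$ this is straightforward: after the fold the new $x_1$-vertex at the tail has neighbours labelled $x_{2n-2}$ and $x_{2n}$, which are distinct, and since the surviving variables $x_1,\dots,x_{2n-2},x_{2n}$ remain pairwise distinct no adjacent pair of equally-labelled vertices is created elsewhere, while $x_1\neq x_{2n}$ prevents a new thorn. The degenerate smallest case $n=2$, where coincidences among the low indices could in principle cascade, I would dispose of by direct inspection of the (very small) graphs involved. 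Granting this verification, both asserted consequences follow, which is the content of the lemma.
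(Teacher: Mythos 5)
Your proof is correct and is essentially the paper's own argument: the paper's proof consists precisely of applying the endomorphism $\psi$ induced by the substitution $x_{2n-1}\mapsto x_1$ (fixing all other variables) and observing that $(\ga_n',\ga_n)=(\al_n\psi,\be_n\psi)$ and $(\nu_n',\nu_n)=(\mu_n'\psi,\mu_n\psi)$, which is a consequence by full invariance. The only difference is that you additionally verify these equalities by tracking the single edge-folding triggered by the relabelling (and flag the degenerate case $n=2$), a check the paper leaves to the reader.
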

\begin{proof} Consider the endomorphism $\psi:\A(X)\to \A(X)$ induced by the substitution $x_{2n-1}\mapsto x_1$ and $x\mapsto x$ for all $x\ne x_{2n-1}$. Then $(\ga_n',\ga_n)=(\al_n\psi,\be_n\psi)$ and $(\nu_n',\nu_n)=(\mu_n'\psi,\mu_n\psi)$.
\end{proof}
\begin{Lemma}\label{reductiontoB} For each  $n\ge 2$,
$(\mu_n',\mu_n)$ is a consequence of $(\al_{n+1},\be_{n+1})$.
\end{Lemma}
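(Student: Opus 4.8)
The plan is to recognize that $(\mu_n',\mu_n)$ is nothing but the left–right reversal of $(\al_n,\be_n)$, and then to realise this reversal inside the fully invariant congruence generated by $(\al_{n+1},\be_{n+1})$. Concretely, $\be_n$ and $\mu_n$ share the same underlying labelled path $x_{2n}\,x_1\,x_2\cdots x_{2n-1}\,x_{2n}\,x_1$: in $\be_n$ the distinguished edge is oriented so that $\lv$ is the $x_1$-vertex (and the pendant $x_{2n}$ hangs at $\lv$), whereas in $\mu_n$ the same edge is oriented with $\rv$ at the $x_1$-vertex (so the pendant $x_{2n}$ hangs at $\rv$); the analogous relation holds between $\al_n$ and $\mu_n'$. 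Thus $(\mu_n',\mu_n)$ is the left–right reversal of $(\al_n,\be_n)$, and since a single folding substitution $\psi_0$ already yields $(\al_n,\be_n)=(\al_{n+1}\psi_0,\be_{n+1}\psi_0)$ exactly as in Lemma~\ref{nplus1impliesn}, the whole difficulty is to produce the \emph{reflected} pair rather than $(\al_n,\be_n)$ itself.

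First I would fix an explicit endomorphism $\psi$ of $F_2(X)$ together with a context $t[\,\cdot\,]$ given by $\wedge$-multiplication with fixed graphs, and claim that after reduction $t\big[\De(\al_{n+1}\psi)\big]=\mu_n'$ and $t\big[\De(\be_{n+1}\psi)\big]=\mu_n$; by the closure properties of the fully invariant congruence this gives the lemma. The substitution must be chosen so that the pendant right-vertex of $\be_{n+1}$ (sitting at $\lv$) is sent, via a compound image, to an edge carrying the reversed content pair $(x_{2n},x_1)$, and so that the two extra vertices of $\be_{n+1}$ relative to $\mu_n$ get absorbed. I would then verify the two equalities purely graph-theoretically, using the description of $\De(u\psi)$ from Section~\ref{algebraB(X)} (replace each left vertex $a$ of $\De(\be_{n+1})$ by ${}^L\De(\cb_a\psi)$ and each right vertex $b$ by $\De(\cb_b\psi)^R$, then reduce), together with formula~(\ref{productinFPSviafoldings}) and the folding arguments of Lemmas~\ref{idempotency} and~\ref{foldingsubstitution}: the repeated segment created by the substitution folds away, collapsing the length-$(2n+4)$ path onto the length-$(2n+2)$ path of $\mu_n$, while the multiplications place the surviving pendant at the right root. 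The check then reduces to confirming that no unexpected thorn or fold survives and that the distinguished edge ends on the pair $(x_2,x_1)$.

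The main obstacle is exactly this left–right reflection. Every single congruence operation preserves orientation: an endomorphism replaces a left (respectively right) vertex by a left-rooted (respectively right-rooted) tree and so preserves the bipartition of the vertex set into $L_\ga$ and $R_\ga$, while left- or right-multiplication preserves the side on which a pendant sits (as in the analysis preceding Lemma~\ref{LRbelowimpliesbelow}); hence no one step can move the $\lv$-pendant of $\be_{n+1}$ to the $\rv$-pendant of $\mu_n$, and Lemma~\ref{changingroots} does not apply, since it never interchanges the two roots on a single edge. I therefore expect the reflection to emerge only from the interplay between the compound substitution and the subsequent edge-foldings, and this is precisely where the index $n+1$ is indispensable: the extra length of $\be_{n+1}$ furnishes the repeated segment whose folding performs the reversal, a resource that $(\al_n,\be_n)$ alone does not provide.
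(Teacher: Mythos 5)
Your proposal is a strategy, not a proof: the objects on which everything depends are never produced. The content of Lemma~\ref{reductiontoB} is an existence statement --- there is a substitution (and, if one wishes, a context) carrying $(\al_{n+1},\be_{n+1})$ onto $(\mu_n',\mu_n)$ --- and your text neither specifies the endomorphism $\psi$, nor the context $t[\,\cdot\,]$, nor performs the reduction that is supposed to establish $t\big[\De(\al_{n+1}\psi)\big]=\mu_n'$ and $t\big[\De(\be_{n+1}\psi)\big]=\mu_n$. What you do supply is the correct framing (that $(\mu_n',\mu_n)$ is the left--right reversal of $(\al_n,\be_n)$), a heuristic argument that no single orientation-preserving operation can effect the reversal, and the expectation that it must emerge from folding a repeated segment furnished by the extra length of $\be_{n+1}$. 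Identifying the obstacle is not overcoming it: at the point where the lemma actually has to be proved, your argument says only that you ``would fix an explicit endomorphism and claim'' the two equalities. That missing construction is the gap, and it is the whole lemma.

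For comparison, the paper's proof is short and completely explicit, and it uses precisely the tool you set aside. It first invokes Lemma~\ref{changingroots} to move the left root of $\al_{n+1}$ and of $\be_{n+1}$ to the vertex labelled $x_3$; the point of this step is not to interchange the two roots of an edge (which, as you correctly note, root-moving can never do), but to shift the distinguished edge one step along the path, so that the pendant end of $\be_{n+1}$ comes to lie beyond the \emph{right} root. It then applies the concrete relabelling $x_1\mapsto x_{2n}$, $x_i\mapsto x_{i-1}$ for $2\le i\le 2n+1$, $x_{2n+2}\mapsto x_1$, under which both extremities of the relabelled path fold back onto it, and asserts that the resulting pair is $(\mu_n',\mu_n)$. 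Observe also that root-moving is itself implemented by multiplication with fixed one-edge graphs, so your proposed toolkit (substitution plus context) is not genuinely different from the paper's; you have simply not instantiated it. One concession is due: your insistence that the reversal is delicate is well founded. If one traces the paper's reduction, the $x_{2n}$-labelled left pendant on the new right root arises as the image of the \emph{old left root} (the $x_1$-vertex), a vertex belonging to $\al_{n+1}$ as well as to $\be_{n+1}$, and not as the image of the pendant of $\be_{n+1}$ (which folds into the new right root); checking that $\al\psi$ and $\be\psi$ nevertheless differ by exactly one pendant is the crux of the verification, and it is exactly the point that your sketch, containing no construction at all, cannot even begin to address.
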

\begin{proof} Let $\al$ and $\be$ be obtained from $\al_{n+1}$ and $\be_{n+1}$ by moving the left root to the vertex labeled $x_3$ (and leaving the right root unchanged). By Lemma \ref{changingroots} the pairs $(\al,\be)$ and $(\al_{n+1},\be_{n+1})$
are equivalent. Now consider the endomorphism $\psi:\A(X)\to \A(X)$ defined by the substitution
\[
x_1\mapsto x_{2n},\  x_i \mapsto x_{i-1}
\text{ for }2\le i \le 2n+1,\ x_{2n+2}\mapsto x_1.
\]
Then $(\mu_n',\mu_n)=(\al\psi,\be\psi)$, hence $(\mu_n',\mu_n)$ is a consequence of $(\al,\be)$ and therefore of $(\al_{n+1},\be_{n+1})$.
\end{proof}
The main result of the present section is now a consequence of Theorem \ref{reductiontoelementary} and Lemmas \ref{reducingelementarypairs}, \ref{trivialreduction}, \ref{reductiontoB} and can be formulated as follows.
\begin{Thm}\label{ThmbasisSPS} The relation
$\{(\al_n,\be_n)\mid n\ge 2\}$
generates $\rho_{\mathbf{SPS}}$ as a fully invariant congruence.
\end{Thm}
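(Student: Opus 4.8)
The plan is to establish the two inclusions $\sigma\subseteq\rho_{\mathbf{SPS}}$ and $\rho_{\mathbf{SPS}}\subseteq\sigma$, where $\sigma$ denotes the fully invariant congruence on $\A(X)$ generated by the relation $\{(\al_n,\be_n)\mid n\ge 2\}$. The inclusion $\sigma\subseteq\rho_{\mathbf{SPS}}$ is immediate: each $(\al_n,\be_n)$ is an elementary pair, so by definition $\l(\al_n)=\l(\be_n)$, $\co_2(\al_n)=\co_2(\be_n)$ and $\r(\al_n)=\r(\be_n)$; by the word problem characterization (\ref{wpsps}) this gives $(\al_n,\be_n)\in\rho_{\mathbf{SPS}}$. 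Since $\rho_{\mathbf{SPS}}$ is itself a fully invariant congruence containing every generator of $\sigma$, we obtain $\sigma\subseteq\rho_{\mathbf{SPS}}$.

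For the reverse and essential inclusion I would invoke Theorem \ref{reductiontoelementary}(2), which asserts that $\rho_{\mathbf{SPS}}$ is generated as a fully invariant congruence by the set of \emph{all} elementary pairs; it therefore suffices to show that every elementary pair is a consequence of $\{(\al_n,\be_n)\mid n\ge 2\}$. By Lemma \ref{reducingelementarypairs} every elementary pair is a consequence of one of the four distinguished families of (\ref{elementarypairs}), namely $(\al_n,\be_n)$, $(\ga_n',\ga_n)$, $(\mu_n',\mu_n)$ or $(\nu_n',\nu_n)$ for some $n\ge 2$, so it is enough to treat these four families. The family $(\al_n,\be_n)$ is the generating set itself. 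The family $(\ga_n',\ga_n)$ is a consequence of $(\al_n,\be_n)$ by Lemma \ref{trivialreduction}. The family $(\mu_n',\mu_n)$ is a consequence of $(\al_{n+1},\be_{n+1})$ by Lemma \ref{reductiontoB}, and since $n+1\ge 2$ this pair belongs to the generating set. Finally $(\nu_n',\nu_n)$ is a consequence of $(\mu_n',\mu_n)$ by Lemma \ref{trivialreduction}, and hence, the consequence relation being transitive, a consequence of $(\al_{n+1},\be_{n+1})$ as well. Thus all four families are consequences of $\{(\al_n,\be_n)\mid n\ge 2\}$, so every elementary pair is a consequence of this relation, whence $\rho_{\mathbf{SPS}}\subseteq\sigma$. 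Combining the two inclusions yields $\sigma=\rho_{\mathbf{SPS}}$, which is the assertion of the theorem.

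I expect no genuine obstacle in the theorem itself: it is a purely organizational assembly of the preceding lemmas, and the only points requiring care are keeping track of the index shifts (the $\mu$- and $\nu$-families are governed by the $(n+1)$st pair $(\al_{n+1},\be_{n+1})$, not the $n$th) and exploiting the transitivity of ``being a consequence of''. The real combinatorial work has already been discharged in the supporting results: the collapse of an arbitrary elementary pair to the four canonical families in Lemma \ref{reducingelementarypairs}, and above all Lemma \ref{reductiontoB}, whose reduction of the $\mu$-family to the $\beta$-family first invokes the root-moving equivalence of Lemma \ref{changingroots} and then applies a specific cyclic-shift substitution endomorphism. These are the steps where the content resides; the present theorem merely packages them into the announced basis.
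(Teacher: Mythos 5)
Your proposal is correct and follows essentially the same route as the paper: the paper's proof is exactly the assembly of Theorem \ref{reductiontoelementary}(2) with Lemmas \ref{reducingelementarypairs}, \ref{trivialreduction} and \ref{reductiontoB}, which is what you carry out (the paper leaves the easy inclusion $\sigma\subseteq\rho_{\mathbf{SPS}}$ implicit, while you spell it out via Corollary \ref{comA} and (\ref{wpsps})). Your handling of the index shift for the $\mu$- and $\nu$-families and the transitivity of the consequence relation matches the intended argument.
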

Let $u_n,v_n\in F_2(X)$ be words such that $\al_n=\De(u_n)=\Te(u_n)$ and $\be_n=\De(v_n)=\Te(v_n)$.  Then Theorem \ref{ThmbasisSPS} may be reformulated as follows.
\begin{Thm}\label{basisSPSintermsofwords} The set $\{u_n\approx v_n\mid n\ge 2\}$ defines the variety $\mathbf{SPS}$ within the variety of all pseudosemilattices.
\end{Thm}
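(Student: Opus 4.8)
The plan is to translate Theorem~\ref{ThmbasisSPS}, which is a statement about the fully invariant congruence $\rho_{\mathbf{SPS}}$ on the relatively free pseudosemilattice $\A(X)$, into the language of identities over the variety $\mathbf{PS}$. The key conceptual bridge is that $\A(X)$ is (a model of) the \emph{free} pseudosemilattice on $X$, so that fully invariant congruences on $\A(X)$ correspond bijectively, and in an inclusion-preserving way, to subvarieties of $\mathbf{PS}$; equivalently, a pair $(\al,\be)\in\A(X)\times\A(X)$ is a consequence of a set $\Sigma$ of pairs (in the sense of fully invariant congruences on $\A(X)$) if and only if the identity $u\approx v$ (for any $u,v$ with $\Te(u)=\al$, $\Te(v)=\be$) is a consequence, within $\mathbf{PS}$, of the corresponding set of identities.

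First I would recall that, by the theorem preceding this one, $(\A(X),\wedge)$ is the free pseudosemilattice on $X$, and hence $\Te\colon F_2(X)\to\A(X)$ factors through the free pseudosemilattice; the fully invariant congruence $\rho_{\mathbf{SPS}}$ on $\A(X)$ is precisely the one whose quotient is the free strict pseudosemilattice. Thus the variety $\mathbf{SPS}$, relative to $\mathbf{PS}$, is defined by exactly those identities $u\approx v$ with $(\Te(u),\Te(v))\in\rho_{\mathbf{SPS}}$. Choosing, as in the statement, words $u_n,v_n$ with $\Te(u_n)=\al_n$ and $\Te(v_n)=\be_n$, the pair $(\al_n,\be_n)\in\rho_{\mathbf{SPS}}$ corresponds to the identity $u_n\approx v_n$ holding in $\mathbf{SPS}$; this is immediate since $\rho_{\mathbf{SPS}}$ is the congruence defining $\mathbf{SPS}$. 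So the identities $u_n\approx v_n$ all hold in $\mathbf{SPS}$.

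It remains to show the converse, namely that $\{u_n\approx v_n\mid n\ge 2\}$ \emph{defines} $\mathbf{SPS}$ within $\mathbf{PS}$, i.e. that every identity valid in $\mathbf{SPS}$ is a consequence of these. By the general correspondence between fully invariant congruences and varieties, it suffices to observe that a set $\Sigma$ of identities defines $\mathbf{SPS}$ within $\mathbf{PS}$ if and only if the pairs $(\Te(p),\Te(q))$, for $(p\approx q)\in\Sigma$, generate $\rho_{\mathbf{SPS}}$ as a fully invariant congruence on $\A(X)$. This is exactly the content of Theorem~\ref{ThmbasisSPS}: the relation $\{(\al_n,\be_n)\mid n\ge 2\}$ generates $\rho_{\mathbf{SPS}}$. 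Combining these two directions gives the claim.

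The main (and only genuinely technical) obstacle is to make the correspondence between fully invariant congruences on $\A(X)$ and subvarieties of $\mathbf{PS}$ completely precise, together with the fact that ``being a consequence'' in the congruence sense matches ``being derivable'' in the equational-logic sense. The subtlety is that one must verify that the endomorphisms of $F_2(X)$ used in the definition of consequence correspond correctly to endomorphisms of the free object $\A(X)$, which is ensured by the fact that $\ker\Te$ is itself fully invariant (established earlier in the excerpt). Once this dictionary is in place, the proof is simply a restatement of Theorem~\ref{ThmbasisSPS} in the equational vocabulary, and I would keep the argument short, appealing to the standard Birkhoff-type correspondence between relatively free algebras and fully invariant congruences.
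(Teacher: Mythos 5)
Your proposal is correct and follows exactly the paper's route: the paper presents Theorem~\ref{basisSPSintermsofwords} as an immediate reformulation of Theorem~\ref{ThmbasisSPS}, relying on the same standard correspondence (stated at the start of Section~\ref{basisSPS}) between identity bases for $\mathbf{SPS}$ relative to $\mathbf{PS}$ and generating sets of $\rho_{\mathbf{SPS}}$ as a fully invariant congruence on the relatively free algebra $\A(X)$ over a countably infinite $X$. Your write-up merely makes that dictionary explicit, which the paper leaves implicit.
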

 We note that the words $u_n$ and $v_n$ are uniquely determined. Indeed, if in a graph $\ga\in\B(X)$ all vertices have degree at most $2$ then $\ga=\De(u)$ for a \textbf{unique} $u\in F_2(X)$. This follows by induction from the fact that for $u=u_1\wedge u_2$, if each vertex in $\De(u)$ has degree at most $2$ then the distinguished vertex in ${}^L\De(u_1)$ has degree at most
$1$ hence $\De(u_1)$ can be uniquely reconstructed from ${}^L\De(u_1)$. Likewise, $\De(u_2)$ can be uniquely reconstructed from $\De(u_2)^R$.

\section{The variety $\mathbf{SPS}$ is inherently non-finitely based}\label{INFB}
A locally finite variety $\mathbf{V}$ is \emph{inherently non-finitely based} if $\mathbf{V}$ is not contained in any finitely based locally finite variety. We are going to show that $\mathbf{SPS}$ admits this remarkable property.

Recall from subsection \ref{greensrelations} the definition of the graph homomorphism $\pi_\ga:\ga\twoheadrightarrow \wt\ga$ for $\ga\in \B(X)$.
A \emph{path} $p$ of length $k$ in a tree $\ga$ is a sequence $a_0,a_1,\dots,a_k$ of  vertices such that any two consecutive elements $a_i$ and $a_{i+1}$ are adjacent (and thus, in particular, are alternately left and right vertices). The image $p\pi_\ga$ of a path $p$ is a path in $\wt\ga$ of the same length as that of $p$.
Let us call an identity $u\approx v$ \emph{non-trivial (for $\mathbf{PS}$)} if it does not hold in $\mathbf{PS}$. Let $u,v,w\in F_2(X)$; then $w$ is an \emph{isoterm for the identity $u\approx v$ relative to $\mathbf{PS}$} if no non-trivial identity $w\approx w'$ is a consequence of $u\approx v$.

Now fix $n\in \mathbb{N}$ and let

\centerline{
\tikz[scale=0.5]{\draw(-1,1) node{$\la=$};
\coordinate[label=below:$x_1$] (1) at (0,0);
\coordinate[label=above:$x_2$] (2) at (1,2);
\coordinate[label=below:$x_3$] (3) at (2,0);
\coordinate[label=above:$x_4$] (4) at (3,2);
\coordinate[label=below:$x_{2n-1}$] (5) at (5,0);
\coordinate[label=above:$x_{2n}$] (6) at (6,2);
\coordinate[label=below:$x_{2n+1}$] (7) at (7,0);
\coordinate[label=above:$x_{2n+2}$] (8) at (8,2);
\draw (4,1)node{$\dots$};
\draw (1) node {$\bullet$};
\draw (2) node {$\bullet$};
\draw (3) node {$\bullet$};
\draw (4) node {$\bullet$};
\draw (5) node {$\bullet$};
\draw (6) node {$\bullet$};
\draw (7) node {$\bullet$};
\draw (8) node {$\bullet$};
\draw (4)--(3.2,1.3);
\draw (4.8,0.7)--(5);
\draw (1)--(2);
\draw (2)--(3)--(4);
\draw (5)--(6)--(7)--(8);}}

{\parindent=0pt and, for each $k\in \mathbb{N}$ let}

\centerline{\tikz[scale=0.5]{
\draw (-1.5,1) node{$\la_k=$};
\coordinate[label=below:$\la$] (1) at (1,0);
\coordinate[label=below:$\la$] (2) at (5,0);
\coordinate[label=below:$\la$] (3) at (10,0);
\draw (0,0) node{$\bullet$};
\draw (0.5,2) node{$\bullet$};
\draw (2,0) node{$\bullet$};
\draw (2.5,2) node{$\bullet$};
\draw (4,0) node{$\bullet$};
\draw (4.5,2) node{$\bullet$};
\draw (6,0) node{$\bullet$};
\draw (6.5,2) node{$\bullet$};
\draw (9,0) node{$\bullet$};
\draw (9.5,2) node{$\bullet$};
\draw (11,0) node{$\bullet$};
\draw (11.5,2) node{$\bullet$};
\draw[double] (0,0)--(0.5,2);
\draw (2,0)--(2.5,2)--(4,0)--(4.5,2);
\draw (6,0)--(6.5,2)--(7,1);
\draw (7.85,1)node{$\dots$};
\draw (8.5,1)--(9,0)--(9.5,2);
\draw (11,0)--(11.5,2);
\draw[dotted] (0,0)--(2,0);
\draw[dotted] (0.5,2)--(2.5,2);
\draw[dotted] (4,0)--(6,0);
\draw[dotted] (4.5,2)--(6.5,2);
\draw[dotted] (9,0)--(11,0);
\draw[dotted] (9.5,2)--(11.5,2);}
}
{\parindent=0pt where the segment $\la$ occurs $k$ times}. Let $m_k\in F_2(X)$ be such that $\De(m_k)=\la_k$.

\begin{Lemma}\label{isoterm} Each word $m_k$ is an isoterm for the identity $u_n\approx v_n$ (relative to  $\mathbf{PS}$).
\end{Lemma}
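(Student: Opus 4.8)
The plan is to work inside the relatively free pseudosemilattice $\A(X)$ and to show that $\la_k=\Te(m_k)$ is rigid under the fully invariant congruence $\theta$ generated by the single pair $(\al_n,\be_n)=(\Te(u_n),\Te(v_n))$. Since ``$m_k\approx w$ is a consequence of $u_n\approx v_n$ relative to $\mathbf{PS}$'' means exactly $(\la_k,\Te(w))\in\theta$, while triviality of the identity means $\Te(w)=\la_k$, the lemma is equivalent to the claim that the $\theta$-class of $\la_k$ is the singleton $\{\la_k\}$. As the first step of any $\theta$-chain issuing from $\la_k$ must already move $\la_k$, it suffices to prove that no single elementary $\theta$-step changes $\la_k$. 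Using the description of $\De(u\psi)$ from Section~\ref{algebraB(X)} together with Corollaries~\ref{deltafullyinvariant} and~\ref{thetafullyinvariant}, such a step amounts to locating a substituted copy $\Te(u_n\psi)$ (or $\Te(v_n\psi)$, or a one-sided variant ${}^L(\cdot)$, $(\cdot)^R$) carried by $\la_k$ and toggling the extra $\be_n$-branch $\De(x_{2n}\psi)^R$ hung at its distinguished left vertex $\lv$. I would therefore reduce the lemma to showing that every such occurrence forces $\Te(u_n\psi)=\Te(v_n\psi)$, so that the toggle is vacuous.

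The decisive tool is a $2$-content computation. First I record that $\co_2(\al_n)=\co_2(\be_n)$ is a single even cycle through $x_1,\dots,x_{2n}$: the path $\al_n$ spells $x_1x_2\cdots x_{2n}x_1$, so its consecutive left/right label pairs close up into the $2n$-cycle $x_1-x_2-\cdots-x_{2n}-x_1$. By contrast $\co_2(\la_k)$ is the $(2n+2)$-cycle through $x_1,\dots,x_{2n+2}$ when $k\ge2$ (the $(2n+2)$-periodic label sequence of $\la_k$ contributes, at each block boundary, the closing pair $\{x_1,x_{2n+2}\}$) and is the corresponding path when $k=1$. Because the occurrence realizes $\Te(u_n\psi)$ as a subtree carried by $\la_k$ and $\co_2$ is invariant under the reductions and monotone along subtrees, we get $\co_2(u_n\psi)\subseteq\co_2(\la_k)$; in particular the skeleton of $\De(u_n\psi)$ traces the $2n$-cycle of $\co_2(\al_n)$ as a closed walk of length $2n$, namely $\l(x_1\psi),\r(x_2\psi),\l(x_3\psi),\dots,\r(x_{2n}\psi),\l(x_1\psi)$, inside $\co_2(\la_k)$.

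Now a winding count finishes the combinatorial heart. In an even cycle of length $2n+2$ a closed walk of length $L$ has net displacement a multiple of $2n+2$ bounded by $L$; since $L=2n$ and $2n\not\equiv0\pmod{2n+2}$, the walk has winding zero, hence is balanced and backtracks (for $k=1$ this is automatic, a tree having no cycles). Thus $\psi$ cannot realize the $2n$-cycle faithfully but must identify labels, and every identification of two consecutive left (or right) labels is precisely an edge-folding in $\De(u_n\psi)$. I would then argue that this forced collapse makes $\lv$ already adjacent, in $\Te(u_n\psi)$, to a vertex labelled $\r(x_{2n}\psi)$ --- the pair $\{x_1,x_{2n}\}$ being present in $\co_2(\al_n)$ and the balancing identifying the two copies of $x_1$ (and of $x_{2n}$) in $\al_n$. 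The extra $\be_n$-branch attaches at $\lv$ exactly one more $\r(x_{2n}\psi)$-neighbour, which therefore folds into the edge already present; hence $\Te(u_n\psi)=\Te(v_n\psi)$ and the step is trivial. The dual occurrences, and those of $\Te(v_n\psi)$, are symmetric, so $\la_k$ is $\theta$-isolated and $m_k$ is an isoterm.

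I expect the real difficulty to be concentrated in the last move: converting ``the $2n$-cycle collapses'' into ``the toggled branch folds away'', uniformly over all endomorphisms $\psi$ and over all ways the occurrence can be seated in $\la_k$, while keeping track of how the global reduction (edge-foldings and thorn-deletions) carrying the $\De$-graph of the context word down to the reduced path $\la_k$ interacts with the purely local replacement. The structural inputs that make this work are that each block $\la$ has \emph{disjoint} left and right contents ($\co_l$ and $\co_r$) and \emph{pairwise distinct} labels, which is exactly what fixes the period at $2n+2$ and powers the winding obstruction $2n\not\equiv0\pmod{2n+2}$; and that $\la_k$ has all vertices of degree at most $2$, so that $m_k$ is the unique word with $\De(m_k)=\la_k$ and occurrences may be treated as subpaths, excluding spurious branchings.
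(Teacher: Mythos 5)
Your global strategy is the same as the paper's: reduce the isoterm property to the claim that whenever $u_n\psi$ (or $v_n\psi$) occurs as a subword of a word $m$ with $\Te(m)=\la_k$, then already $\mathbf{PS}\models u_n\psi\approx v_n\psi$, and detect this through the skeleton $\sk(u_n,\psi)$, a path of length $2n$ whose two endpoints carry the same label. The divergence --- and the first genuine gap --- lies in the combinatorial core. You replace the ambient graph by the $2$-content and run a winding argument in ``the $(2n+2)$-cycle $\co_2(\la_k)$''. But $\co_2(\la_k)$ is not that cycle: by the inductive definition (starting from $\co_2(x)=\{(x,x)\}$) it contains the diagonal pairs $(x_j,x_j)$ for \emph{every} letter, i.e.\ a loop at every vertex of your label graph. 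Such loops are realizable by skeleton edges (nothing prevents $\psi$ from making $\l(x_i\psi)=\r(x_{i+1}\psi)$; in the ambient graph such an edge maps onto a thorn edge), and in a cycle-with-loops ``winding number zero'' does \emph{not} imply a backtrack $y_{i-1}=y_{i+1}$, which is the only kind of coincidence your argument can convert into an edge-folding: the closed walk $x_1,x_1,x_2,x_2,x_1$ (loop, edge, loop, edge) has length $4$, winding $0$, and no index $i$ with $y_{i-1}=y_{i+1}$. One can in fact rescue the claim by exploiting the ordered (left-label, right-label) structure of $2$-content pairs to show that diagonal steps force contradictions or backtracks, but that case analysis is precisely the information your ``labels only'' argument discards. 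The paper sidesteps the issue by staying in the actual graph: since $\ol{\De(m)}=\la_k$, the graph $\wt{\De(m)}$ embeds into $\vartheta\la_k$ ($\la_k$ with one thorn per vertex), and in that tree every walk of even length joining \emph{distinct} equally-labeled vertices has length at least $2n+2$; as the image of the skeleton path has length $2n$, its endpoints must coincide. This one metric statement handles cycle edges and thorn (i.e.\ diagonal) edges uniformly.

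The second gap is the step you yourself flag as open (``I would then argue\dots'', ``I expect the real difficulty to be concentrated in the last move''), and what you sketch for it is too weak. It does not suffice that, after the collapse, $\lv$ is adjacent in $\Te(u_n\psi)$ to \emph{a vertex labelled} $\r(x_{2n}\psi)$ so that the new edge of the $\be_n$-branch ``folds into the edge already present'': folding one edge into another does not make the two reduced graphs equal, because the entire tree $\De(x_{2n}\psi)^R$ hangs on the toggled vertex. The paper closes this in two moves absent from your proposal: (a) the folding map of a subtree is the restriction of the folding map of the whole tree, so $\pi'\vert_{\sk(u_n,\psi)}=\pi$ transfers the identification $a_1\pi=a_{2n+1}\pi$ into $\sk(v_n,\psi)$, and one further folding yields $a_0\pi'=a_{2n}\pi'$ for the extra vertex $a_0$; then (b) two full copies of $\De(x_{2n}\psi)^R$ are attached at this identified vertex, and the argument of Lemma~\ref{foldingsubstitution} folds them into one, giving $\wt{\De(v_n\psi)}=\wt{\De(u_n\psi)}$, hence $\ol{\De(v_n\psi)}=\ol{\De(u_n\psi)}$ and $\mathbf{PS}\models u_n\psi\approx v_n\psi$. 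Without (a) and (b), and with the winding argument repaired as above, your proof stops exactly where the lemma's content begins.
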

\begin{proof} Let $\psi$ be an endomorphism of $F_2(X)$ and $m\in F_2(X)$ be such that $\mathbf{PS}\models m_k\approx m$ and such that either $u_n\psi$ or $v_n\psi$ is a subword of $m$. We prove that this implies $\mathbf{PS}\models u_n\psi\approx v_n\psi$. Once this claim is proved the statement of the lemma is an immediate consequence. Indeed, this claim implies that there is no deduction process, using an identity of the form $u_n\psi\approx v_n\psi$,  which transforms $m_k$ to a word $u$ for which the identity $m_k\approx u$ is non-trivial.

Suppose that $u_n\psi$ is a subword of $m$ (the case $v_n\psi$ a subword of $m$ is similar and, in fact, simpler). First of all, $\De(u_n\psi)$ is a subtree of $\De(m)$. Let us consider the skeleton $\sk(u_n,\psi)$ which is a subtree of $\De(u_n\psi)$ and hence a subtree of $\De(m)$. For $i=1,\dots,2n$ let
\[y_i=\begin{cases}
\l(x_i\psi) &\text{ if }i\text{ is odd}\\
\r(x_i\psi) &\text{ if }i\text{ is even;}\end{cases}
\]
{\parindent=0pt then} $\sk(u_n,\psi)$ is the following graph

\centerline{\tikz[scale=0.5]{
\coordinate[label=below:$y_1$] (1) at (0,0);
\coordinate[label=above:$y_2$] (2) at (1,2);
\coordinate[label=below:$y_3$] (3) at (2,0);
\coordinate[label=above:$y_4$] (4) at (3,2);
\coordinate[label=above:$y_{2n-2}$] (5) at (5,2);
\coordinate[label=below:$y_{2n-1}$] (6) at (6,0);
\coordinate[label=above:$y_{2n}$] (7) at (7,2);
\coordinate[label=below:$y_{1}$] (8) at (8,0);
\draw (4,1)node{$\dots$};
\draw (1) node {$\bullet$};
\draw (2) node {$\bullet$};
\draw (3) node {$\bullet$};
\draw (4) node {$\bullet$};
\draw (5) node {$\bullet$};
\draw (6) node {$\bullet$};
\draw (7) node {$\bullet$};
\draw (8) node {$\bullet$};
\draw[double] (1)--(2);
\draw (2)--(3)--(4);
\draw (5)--(6)--(7)--(8);
\filldraw (8.4,1)circle (0.5pt);}}
{\parindent=0pt Let}  the vertices in this graph be denoted by $a_1,a_2,\dots,a_{2n},a_{2n+1}$
(in the obvious way). Note that $p=a_1,\dots,a_{2n+1}$ is a  path of length $2n$. Let us consider the respective graphs reduced for edge-foldings: $\wt{\sk(u_n,\psi)}$ is a subtree of $\wt{\De(u_n\psi)}$ which in turn is a subtree of $\wt{\De(m)}$. Let $\pi:\sk(u_n,\psi)\to \wt{\sk(u_n,\psi)}$ be the canonical graph homomorphism. The image path $ p\pi$ of $p$ has length $2n$ and is a  path in $\wt{\sk(u_n,\psi)}\subseteq \wt{\De(m)}$ starting at $a_1\pi$ and ending at $a_{2n+1}\pi$ with both vertices having the same label. Consider the graph $\vartheta\la_k$ obtained from $\la_k$ by attaching a thorn to each vertex. Then $\wt{\De(m)}$ is a subgraph of $\vartheta\la_k$ and $ p\pi$ can be considered as a path in $\vartheta\la_k$. However, each  path of even length in $\vartheta\la_k$ whose endpoints are distinct but have the same label must have length at least $2n+2$. It follows that the endpoints of $ p\pi$ must coincide, in particular $a_1\pi=a_{2n+1}\pi$. Let $\pi':\sk(v_n,\psi)\to \wt{\sk(v_n,\psi)}$ be the canonical homomorphism. As mentioned in subsection 3.3, $\pi'\vert_{\sk(u_n,\psi)}=\pi$. It follows that $a_1\pi'=a_{2n+1}\pi'$. An inspection of the graph $\De(v_n)$ then implies $a_0\pi'=a_{2n}\pi'$ where $a_0$ is the unique vertex in $\sk(v_n,\psi)\setminus \sk(u_n,\psi)$. But this means that
$\wt{\sk(u_n,\psi)}=\wt{\sk(v_n,\psi)}.$
From this we conclude that $\wt{\De(v_n\psi)}=\wt{\De(u_n\psi)}$. Indeed, take $\De(v_n\psi)$ and start to apply the edge-foldings that map the subtree $\sk(v_n,\psi)$ to $\wt{\sk(v_n,\psi)}$. In this process, $a_1$ is identified with $a_{2n+1}$ and $a_0$ is identified with $a_{2n}$ and so $\wt{\sk(v_n,\psi)}$ coincides with $\wt{\sk(u_n,\psi)}$. Next observe that in the resulting graph, on the vertex $a_0\pi'=a_{2n}\pi'$ two copies of $\De(x_{2n}\psi)^R$ are attached (meaning that the distinguished right vertices of these two copies are identified with the vertex $a_0\pi'=a_{2n}\pi'$). A sequence of edge-foldings then reduces the two copies eventually to one copy. But now we have arrived at a graph that can be obtained from $\De(u_n\psi)$ by edge-foldings. Altogether $\wt{\De(v_n\psi)}=\wt{\De(u_n\psi)}$ and thus also $\ol{\De(v_n\psi)}=\ol{\De(u_n\psi)}$ so that $\mathbf{PS}\models u_n\psi\approx v_n\psi$.
\end{proof}

Denote, for any pseudosemilattice $E$ and any $n\ge 2$, by $\rho_n$ the smallest congruence $\rho$ such that $E/\rho\models u_n\approx v_n$. The next statement is an immediate consequence of Lemma \ref{isoterm}.
\begin{Cor}\label{infb} The relatively free pseudosemilattice $\A(x_1,\dots,x_{2n+2})/\rho_n$ is infinite for each $n\ge 2$.\end{Cor}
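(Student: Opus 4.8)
The plan is to exhibit infinitely many elements of $\A(x_1,\dots,x_{2n+2})/\rho_n$ that are pairwise distinct, using the graphs $\la_k$ as a ladder of witnesses. Since every vertex of $\la_k$ is labelled by one of $x_1,\dots,x_{2n+2}$, each word $m_k$ may be chosen in $F_2(x_1,\dots,x_{2n+2})$, so that $\Te(m_k)\in\A(x_1,\dots,x_{2n+2})$; I will show that the images of the elements $\Te(m_k)$ in the quotient are pairwise distinct. The whole argument rests on Lemma~\ref{isoterm}, which is the substantive input already established.

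First I would record the meaning of $\rho_n$. Since $\A(x_1,\dots,x_{2n+2})$ is the relatively free pseudosemilattice on $2n+2$ generators and $\rho_n$ is the \emph{smallest} congruence whose quotient satisfies $u_n\approx v_n$, the congruence $\rho_n$ is fully invariant and $\A(x_1,\dots,x_{2n+2})/\rho_n$ is the relatively free algebra on $x_1,\dots,x_{2n+2}$ in the subvariety of $\mathbf{PS}$ determined by the extra identity $u_n\approx v_n$. In particular, for $a,b\in F_2(x_1,\dots,x_{2n+2})$, if $\Te(a)\mathrel{\rho_n}\Te(b)$ then $a\approx b$ is a consequence of $u_n\approx v_n$ relative to $\mathbf{PS}$ (a witnessing $\rho_n$-derivation inside the finitely generated algebra is \emph{a fortiori} such a derivation). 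Granting this, the core of the proof is short: suppose $k\ne k'$ and $\Te(m_k)\mathrel{\rho_n}\Te(m_{k'})$; then $m_k\approx m_{k'}$ is a consequence of $u_n\approx v_n$ relative to $\mathbf{PS}$, so by the isoterm property of $m_k$ (Lemma~\ref{isoterm}) this identity must be trivial, i.e.\ $\mathbf{PS}\models m_k\approx m_{k'}$. Because $(\A(X),\wedge)$ is the free pseudosemilattice, this forces $\Te(m_k)=\Te(m_{k'})$, that is $\ol{\la_k}=\ol{\la_{k'}}$.

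It then remains to check that the witnesses are genuinely distinct, i.e.\ that $\ol{\la_k}\ne\ol{\la_{k'}}$ for $k\ne k'$. Here I would observe that $\la_k$ is already reduced: no two adjacent vertices of $\la_k$ carry the same label, so there is no thorn, and the distance estimate used in the proof of Lemma~\ref{isoterm} — any two distinct equally labelled vertices lie at distance at least $2n+2>2$ — rules out any equally labelled pair at distance $2$, hence any edge-folding. Thus $\Te(m_k)=\ol{\la_k}=\la_k$, and since the number of vertices of $\la_k$ grows strictly with $k$ the graphs $\la_k$ are pairwise non-isomorphic. Consequently the elements $\Te(m_k)\rho_n$ ($k\in\mathbb{N}$) are pairwise distinct, and $\A(x_1,\dots,x_{2n+2})/\rho_n$ is infinite. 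I expect the only delicate point to be the bookkeeping in the second paragraph: one must be sure that membership in $\rho_n$ really does yield a consequence of $u_n\approx v_n$ over $\mathbf{PS}$ to which Lemma~\ref{isoterm} applies, after which the distinctness of the reduced graphs $\la_k$ is routine.
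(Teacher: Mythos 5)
Your proposal is correct and takes essentially the same route as the paper, which states this corollary as an immediate consequence of Lemma \ref{isoterm}: the steps you spell out (identifying $\A(x_1,\dots,x_{2n+2})/\rho_n$ as the relatively free algebra in the subvariety of $\mathbf{PS}$ defined by $u_n\approx v_n$, invoking the isoterm property to conclude that $\Te(m_k)\mathrel{\rho_n}\Te(m_{k'})$ forces $\Te(m_k)=\Te(m_{k'})$, and observing that the graphs $\la_k$ are reduced and pairwise distinct) are exactly the details the paper leaves implicit. There are no gaps.
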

We arrive at the main result in this section.
\begin{Thm}\label{SPSisINFB} The variety of all strict pseudosemilattices $\mathbf{SPS}$ is inherently non-finitely based.
\end{Thm}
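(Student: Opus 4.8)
The plan is to argue by contradiction, exploiting the basis $\{u_n\approx v_n\mid n\ge 2\}$ produced in Theorem \ref{basisSPSintermsofwords} together with the fact (Corollary \ref{infb}) that retaining only one of these identities already destroys local finiteness. First I would record that $\mathbf{SPS}$ is itself locally finite, as the notion requires: by the word problem (\ref{wpsps}) a free strict pseudosemilattice element is determined by the triple $(\l,\co_2,\r)$, so on a finite set $X$ there are at most $|X|^2\cdot 2^{|X|^2}$ such elements and hence $\A(X)/\rho_{\mathbf{SPS}}$ is finite.

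Now suppose, for contradiction, that $\mathbf{SPS}\subseteq \mathbf{W}$ for some finitely based, locally finite variety $\mathbf{W}$ of binary algebras. Then $\mathrm{Id}(\mathbf{W})\subseteq \mathrm{Id}(\mathbf{SPS})$, and $\mathrm{Id}(\mathbf{W})$ is generated as a fully invariant congruence by a \emph{finite} set of identities. By Theorem \ref{basisSPSintermsofwords}, $\mathrm{Id}(\mathbf{SPS})$ is the fully invariant congruence generated by the (finitely many) defining identities of $\mathbf{PS}$ together with $\{u_n\approx v_n\mid n\ge 2\}$. Since the fully-invariant-congruence closure operator is finitary, the finite generating set of $\mathrm{Id}(\mathbf{W})$ is already a consequence of the $\mathbf{PS}$-identities together with finitely many $u_n\approx v_n$; letting $n_0$ be the largest index occurring, Lemma \ref{nplus1impliesn} collapses this finite collection to the single pair $u_{n_0}\approx v_{n_0}$, because $u_n\approx v_n$ is a consequence of $u_{n_0}\approx v_{n_0}$ for every $n\le n_0$. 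Hence $\mathrm{Id}(\mathbf{W})$ is contained in the equational theory of the variety $\mathbf{V}_{n_0}$ of all pseudosemilattices satisfying $u_{n_0}\approx v_{n_0}$, which by Birkhoff's theorem means $\mathbf{V}_{n_0}\subseteq \mathbf{W}$.

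It remains to contradict the local finiteness of $\mathbf{W}$ by showing that $\mathbf{V}_{n_0}$ is not locally finite. The free algebra of $\mathbf{V}_{n_0}$ on the generators $x_1,\dots,x_{2n_0+2}$ is obtained from the free pseudosemilattice $\A(x_1,\dots,x_{2n_0+2})$ by imposing all substitution instances of $u_{n_0}\approx v_{n_0}$, i.e.\ it is exactly $\A(x_1,\dots,x_{2n_0+2})/\rho_{n_0}$ in the notation preceding Corollary \ref{infb}; that corollary asserts this algebra is infinite. Thus $\mathbf{V}_{n_0}$, and with it the larger $\mathbf{W}$, fails to be locally finite, the desired contradiction. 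Therefore no finitely based locally finite variety contains $\mathbf{SPS}$, so $\mathbf{SPS}$ is inherently non-finitely based.

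I would expect the genuine difficulty to lie not in this formal wrap-up but in Lemma \ref{isoterm} and Corollary \ref{infb}, which are assumed here. Within the present argument the one step demanding care is the reduction to a single identity $u_{n_0}\approx v_{n_0}$: there I must invoke the finitariness of equational consequence and the monotone chain of Lemma \ref{nplus1impliesn}, and I must verify that $\rho_{n_0}$ really is the restriction to $\A(x_1,\dots,x_{2n_0+2})$ of the fully invariant congruence of $\mathbf{V}_{n_0}$, so that Corollary \ref{infb} genuinely speaks about the relevant relatively free algebra.
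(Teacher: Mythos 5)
Your proof is correct and follows essentially the same route as the paper's: reduce the finite set of identities holding in $\mathbf{SPS}$ to consequences of a single identity $u_{n_0}\approx v_{n_0}$ via the finitariness of equational consequence together with Lemma \ref{nplus1impliesn}, and then use the infinite, finitely generated algebra $\A(x_1,\dots,x_{2n_0+2})/\rho_{n_0}$ of Corollary \ref{infb} to defeat local finiteness of any finitely based variety containing $\mathbf{SPS}$. The only cosmetic differences are your contradiction framing with the explicit Birkhoff correspondence (the paper directly exhibits the infinite algebra satisfying the given finite set of identities) and your counting argument via the solution (\ref{wpsps}) of the word problem for the local finiteness of $\mathbf{SPS}$ itself (the paper instead cites that $\mathbf{SPS}$ is generated by a finite pseudosemilattice).
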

\begin{proof} As  mentioned in the introduction,
$\mathbf{SPS}$ is generated by a finite pseudosemilattice and hence is a locally finite variety.
Let $I$ be a finite set of identities satisfied by $\mathbf{SPS}$. We need to show that there exists an infinite, finitely generated pseudosemilattice satisfying all identities of $I$. By Lemma \ref{nplus1impliesn} and Theorem \ref{basisSPSintermsofwords}
there exists a positive integer $n$ such that all identities in $I$ are consequences of $u_n\approx v_n$. In particular, each pseudosemilattice that satisfies $u_n\approx v_n$ also satisfies all members of $I$. The relatively free pseudosemilattice $\A(x_1,\dots,x_{2n+2})/\rho_n$ clearly satisfies $u_n\approx v_n$ and thus satisfies all identities of $I$; by Corollary \ref{infb} it is infinite and it is clearly finitely generated.
\end{proof}

This result has far reaching consequences. Since for each finite non-associative pseudosemilattice
$E$, the variety $\mathbf{SPS}$ is contained in the variety $\mathsf{var}(E)$ generated by $E$, $\mathsf{var}(E)$ and therefore $E$ itself cannot have a finite identity basis (a result that has been obtained already by the second author by different arguments \cite{lPhD}). In contrast, each associative pseudosemilattice, being an idempotent semigroup, is finitely based. Altogether we have proved the next result.
\begin{Cor} For a finite pseudosemilattice $E$ the following assertions are equivalent:
\begin{enumerate}
\item $E$ is not finitely based,
\item $E$ is inherently non-finitely based,
\item $E$ does not satisfy the associative law.
\end{enumerate}
\end{Cor}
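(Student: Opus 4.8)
The plan is to establish the cycle of implications $(3)\Rightarrow(2)\Rightarrow(1)\Rightarrow(3)$, drawing on Theorem \ref{SPSisINFB} together with two structural facts already available: that $\mathbf{SPS}$ is the unique \emph{least} non-associative variety of pseudosemilattices (so every non-associative member generates a variety containing $\mathbf{SPS}$), and that, by (PS2) and (PS2'), an associative pseudosemilattice is necessarily a normal band.

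The implication $(2)\Rightarrow(1)$ I would dispatch straight from the definitions: if $E$ is inherently non-finitely based then its variety $\mathsf{var}(E)$ is contained in no finitely based locally finite variety. Since $E$ is finite, $\mathsf{var}(E)$ is itself locally finite; were it finitely based it would be a finitely based locally finite variety containing $E$, a contradiction. Hence $E$ is not finitely based.

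For $(3)\Rightarrow(2)$ I would argue as follows. Assume $E$ is not associative. Because $\mathbf{SPS}$ is the least non-associative variety, $\mathbf{SPS}\subseteq\mathsf{var}(E)$. Suppose, for contradiction, that $E$ lies in some finitely based locally finite variety $\mathbf{W}$. Then $\mathbf{SPS}\subseteq\mathsf{var}(E)\subseteq\mathbf{W}$, so $\mathbf{W}$ is a finitely based locally finite variety containing $\mathbf{SPS}$, contradicting Theorem \ref{SPSisINFB}. Thus no such $\mathbf{W}$ exists and $E$ is inherently non-finitely based. To close the cycle I would prove $(1)\Rightarrow(3)$ by contraposition: if $E$ is associative it is, by (PS2) and (PS2'), a normal band, hence an idempotent semigroup. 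Here I would invoke the classical fact that every variety of bands is finitely based (equivalently, every finite band has a finite identity basis). Adjoining the associative law to a finite basis for the band variety generated by $E$ yields a finite identity basis for $\mathsf{var}(E)$ within binary algebras, so $E$ is finitely based.

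The substantive content of this corollary is entirely packaged in Theorem \ref{SPSisINFB} and in the cited finite basability of bands; the corollary itself is essentially book-keeping. Accordingly, the only point requiring care is in $(3)\Rightarrow(2)$: one must ensure that the hypothetical witnessing variety $\mathbf{W}$ is genuinely \emph{locally finite}, which is exactly where finiteness of $E$ is used so that Theorem \ref{SPSisINFB} applies. I would therefore expect no serious obstacle, only the need to state the definitions of ``finitely based'' and ``inherently non-finitely based'' precisely enough that the three conditions are compared within the same variety of binary algebras.
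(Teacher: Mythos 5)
Your proposal is correct and follows essentially the same route as the paper: the paper also derives $(3)\Rightarrow(2)$ from the facts that $\mathbf{SPS}\subseteq\mathsf{var}(E)$ for any finite non-associative $E$ together with Theorem \ref{SPSisINFB}, and handles the associative case by observing that an associative pseudosemilattice is a band (normal band), hence finitely based. Your extra care about transferring a finite semigroup basis to a binary-algebra basis by adjoining the associative law, and about where local finiteness of the ambient variety is needed, only makes explicit what the paper leaves implicit.
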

Moreover, $\mathbf{SPS}$ is not contained in any finitely based locally finite variety of \textbf{binary algebras} (not just of pseudosemilattices). Indeed suppose that $\mathbf{SPS}$ were
contained in a finitely based locally finite variety of binary algebras $\mathbf{V}=[s_1\approx t_1,\dots,s_n\approx t_n]$. Then the variety defined by $s_1\approx t_1,\dots,s_n\approx t_n$ together with the identities ((PS1)--(PS3), (PS2'), (PS3'))
would define a finitely based locally finite variety of pseudosemilattices containing $\mathbf{SPS}$, a contradiction.
As a consequence, for example, each variety of the form $\mathbf{SPS}\vee \mathbf{V}$ is not finitely based for any locally finite variety $\mathbf{V}$ of binary algebras. Another  application is as follows. It is one of the basic ingredients \cite{bifreeLI} of the theory of e-varieties of locally inverse semigroups that the sandwich operation $\wedge$ on the set of idempotents $E(S)$ of a (locally inverse) semigroup $S$ admits a canonical extension to a binary operation on $S$ by letting, for $x,y\in S$, $x\wedge y$ be the unique element of $xV(yx)y$ \cite[Lemma 2.1]{trotter} where $V(yx)$ denotes the set of all inverses of $yx$ in $S$ --- the resulting binary algebra is, of course, no longer idempotent. If $S$ is a finite non-$E$-solid locally inverse semigroup then its pseudosemilattice $(E(S),\wedge)$ is inherently non-finitely based and hence so is $(S,\wedge)$ itself (containing $(E(S),\wedge)$ as a substructure).
\begin{Cor} For each finite non-$E$-solid locally inverse semigroup $S$ the binary algebra $(S,\wedge)$ is (inherently) non-finitely based.
\end{Cor}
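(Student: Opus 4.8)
The plan is to exhibit $(S,\wedge)$ as a finite binary algebra containing an inherently non-finitely based subalgebra, and then to invoke the elementary principle that inherent non-finite basedness passes upward along a subalgebra inclusion between finite algebras. Since $S$ is finite, so is the binary algebra $(S,\wedge)$; hence $\mathsf{var}(S,\wedge)$ is locally finite and the notion of inherent non-finite basedness applies to it. This observation keeps the entire argument inside the locally finite setting in which the definition lives.

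First I would identify the relevant subalgebra. Because $S$ is not $E$-solid, the criterion recalled in the introduction (a locally inverse semigroup is $E$-solid if and only if its pseudosemilattice of idempotents is associative, \cite[Theorem 4.1]{biequationaltheory}) shows that $(E(S),\wedge)$ is a finite \emph{non-associative} pseudosemilattice. By the earlier corollary characterizing finite non-finitely based pseudosemilattices — the one asserting that, for a finite pseudosemilattice, being non-finitely based, being inherently non-finitely based, and failing associativity are all equivalent, itself a consequence of Theorem \ref{SPSisINFB} — the algebra $(E(S),\wedge)$ is inherently non-finitely based. Moreover, by the extension of the sandwich operation recalled just above (with $x\wedge y$ the unique element of $xV(yx)y$), this operation restricts on idempotents to the pseudosemilattice operation, so $(E(S),\wedge)$ sits inside $(S,\wedge)$ as a substructure.

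The remaining and essentially only step is the inheritance argument. Suppose, towards a contradiction, that $\mathsf{var}(S,\wedge)$ were contained in some finitely based locally finite variety $\mathbf{W}$. As $(E(S),\wedge)$ is a subalgebra of $(S,\wedge)$, we have $(E(S),\wedge)\in\mathsf{var}(S,\wedge)\subseteq\mathbf{W}$, and since a variety containing an algebra contains the whole variety it generates, $\mathsf{var}(E(S),\wedge)\subseteq\mathbf{W}$. This contradicts the inherent non-finite basedness of $(E(S),\wedge)$. Hence no such $\mathbf{W}$ exists, and $(S,\wedge)$ is inherently non-finitely based.

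I do not anticipate a genuine obstacle: all the substance already resides in Theorem \ref{SPSisINFB} and its pseudosemilattice corollary, together with the cited fact that the sandwich operation extends so as to realize $(E(S),\wedge)$ as a substructure of $(S,\wedge)$. The only point deserving care is to remain within the class of locally finite varieties throughout, which is automatic here because $S$, and therefore $(S,\wedge)$, is finite.
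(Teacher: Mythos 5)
Your strategy is the same as the paper's: realize $(E(S),\wedge)$ as a finite non-associative pseudosemilattice, hence inherently non-finitely based, observe that it is a substructure of $(S,\wedge)$ under the sandwich operation, and let the property pass upward. The inheritance mechanism itself (any finitely based locally finite variety containing $\mathsf{var}(S,\wedge)$ would contain $\mathsf{var}(E(S),\wedge)$) is exactly right.

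However, one step does not follow from what the paper has proved at the point where you invoke it, and it is precisely the step for which the paper interpolates a separate argument. The corollary asserting that a finite non-associative pseudosemilattice is inherently non-finitely based is established in Section \ref{INFB}, where Theorem \ref{SPSisINFB} and its consequences are statements about the lattice $\mathcal{L}(\mathbf{PS})$: there, ``not contained in any finitely based locally finite variety'' means a variety \emph{of pseudosemilattices}. In your concluding contradiction, the hypothetical variety $\mathbf{W}\supseteq\mathsf{var}(S,\wedge)$ is a variety of \emph{binary algebras}, and it can never be a variety of pseudosemilattices: as the paper notes, $(S,\wedge)$ is not idempotent (a non-$E$-solid semigroup is not a band), so $\mathbf{W}\not\subseteq\mathbf{PS}$. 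Consequently ``$\mathsf{var}(E(S),\wedge)\subseteq\mathbf{W}$'' does not, by itself, contradict the pseudosemilattice form of the corollary. The missing bridge, supplied by the paper immediately before the statement in question, is: if $\mathbf{W}=[s_1\approx t_1,\dots,s_n\approx t_n]$ were finitely based and locally finite, then the variety defined by $s_1\approx t_1,\dots,s_n\approx t_n$ together with the identities (PS1)--(PS3), (PS2'), (PS3') would be a finitely based locally finite variety of \emph{pseudosemilattices} containing $\mathbf{SPS}$ (note that $\mathbf{SPS}\subseteq\mathsf{var}(E(S),\wedge)\subseteq\mathbf{W}$, since $\mathbf{SPS}$ is the least non-associative variety of pseudosemilattices), contradicting Theorem \ref{SPSisINFB}. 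Adding this bridging sentence makes your proof complete and identical in substance to the paper's.
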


\section{Supplements and Applications}\label{supplements}

Fix a positive integer $n$ and let $x_1,\dots, x_{2n}$ be pairwise distinct variables; let\hspace*{-.3cm}
\centerline{
\tikz[baseline=11.5pt,scale=0.5]{
\draw (-1.5,1) node{$\be_n'=$};
\coordinate[label=above:$x_{2n}$] (1) at (0,2);
\coordinate[label=below:$x_1$] (2) at (1,0);
\coordinate[label=above:$x_2$] (3) at (2,2);
\coordinate[label=below:$x_3$] (4) at (3,0);
\coordinate[label=above:$x_{2n-2}$] (5) at (5,2);
\coordinate[label=below:$x_{2n-1}$] (6) at (6,0);
\coordinate[label=above:$x_{2n}$] (7) at (7,2);
\coordinate[label=below:$x_{1}$] (8) at (8,0);
\draw (4,1)node{$\dots$};
\draw (1) node {$\bullet$};
\draw (2) node {$\bullet$};
\draw (3) node {$\bullet$};
\draw (4) node {$\bullet$};
\draw (5) node {$\bullet$};
\draw (6) node {$\bullet$};
\draw (7) node {$\bullet$};
\draw (8) node {$\bullet$};
\draw (4)--(3.2,0.7);
\draw (4.8,1.3)--(5);
\draw (1)--(2);
\draw (2)--(3);
\draw (3)--(4);
\draw (5)--(6)--(7);
\draw[double](7)--(8);}
\hspace*{-.1cm}and
\tikz[baseline=11.5pt,scale=0.5]{
\draw (-1.5,1) node{$\de_n=$};
\coordinate[label=below:$x_{1}$] (1) at (0,0);
\coordinate[label=above:$x_2$] (2) at (1,2);
\coordinate[label=below:$x_3$] (3) at (2,0);
\coordinate[label=above:$x_4$] (4) at (3,2);
\coordinate[label=below:$x_{2n-1}$] (5) at (5,0);
\coordinate[label=above:$x_{2n}$] (6) at (6,2);
\coordinate[label=below:$x_{1}$] (7) at (7,0);
\coordinate[label=above:$x_{2}$] (8) at (8,2);
\draw (4,1)node{$\dots$};
\draw (1) node {$\bullet$};
\draw (2) node {$\bullet$};
\draw (3) node {$\bullet$};
\draw (4) node {$\bullet$};
\draw (5) node {$\bullet$};
\draw (6) node {$\bullet$};
\draw (7) node {$\bullet$};
\draw (8) node {$\bullet$};
\draw (4)--(3.2,1.3);
\draw (4.8,0.7)--(5);
\draw[double] (1)--(2);
\draw (2)--(3);
\draw (3)--(4);
\draw (5)--(6)--(7)--(8);}
}
{\parindent=0pt and} let $\al_n'$ be the graph obtained by removing from $\be_n'$ the unique non-distinguished degree 1 vertex. Note that $\al_n$ is obtained from $\de_n$ by removing the unique non-distinguished degree 1 vertex.
\begin{Lemma} For each   $n\ge 2$,
the pairs $(\al_n,\be_n)$ and $(\al_n,\de_n)$ are equivalent.
\end{Lemma}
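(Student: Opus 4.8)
The plan is to show that each of the two pairs is a consequence of the other, by first sliding the distinguished roots of $(\al_n,\be_n)$ to the far end of the path (via Lemma~\ref{changingroots}) and then applying one path-reversing variable substitution.

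First I would apply Lemma~\ref{changingroots} to $(\al_n,\be_n)$. Since $\be_n<\al_n$ and the variables $x_1,\dots,x_{2n}$ are pairwise distinct, the left and right contents of $\be_n$ are disjoint, so the lemma applies: moving the distinguished edge of both $\al_n$ and $\be_n$ from the end-edge at the vertices labelled $x_1,x_2$ to the end-edge at the vertices labelled $x_{2n},x_1$ yields an equivalent pair, which is exactly $(\al_n',\be_n')$. Hence $(\al_n,\be_n)$ and $(\al_n',\be_n')$ generate the same fully invariant congruence, and it suffices to connect $(\al_n',\be_n')$ with $(\al_n,\de_n)$.

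The key observation is that $\al_n'$ and $\al_n$ are one and the same path $x_1,x_2,\dots,x_{2n},x_1$ carrying its distinguished edge at opposite ends, and likewise $\be_n'$ and $\de_n$ are paths on $2n+2$ vertices rooted at one end. So I would introduce the endomorphism $\psi$ of $F_2(X)$ that fixes $x_1$ and reverses $x_2,\dots,x_{2n}$, namely
\[\psi(x_1)=x_1,\qquad \psi(x_k)=x_{2n+2-k}\quad(2\le k\le 2n),\]
which is a permutation of $\{x_1,\dots,x_{2n}\}$ satisfying $\psi^2=\mathrm{id}$. Reading the labels of $\al_n'$ and of $\be_n'$ starting from the left root and moving inward, and applying $\psi$, one checks that the resulting label sequences are precisely those of $\al_n$ and of $\de_n$ read from their left roots; in particular the pendant label $x_{2n}$ of $\be_n'$ is carried to the pendant label $x_2$ of $\de_n$. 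Since $\psi$ is injective on the occurring variables it preserves reducedness (no new thorn or foldable pair is created), so in $\A(X)$ one obtains $\al_n'\psi=\al_n$ and $\be_n'\psi=\de_n$.

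Finally I would conclude. Applying $\psi$ to $(\al_n',\be_n')$ produces $(\al_n,\de_n)$, so $(\al_n,\de_n)$ lies in the fully invariant congruence generated by $(\al_n',\be_n')$, equivalently by $(\al_n,\be_n)$; conversely, applying $\psi$ to $(\al_n,\de_n)$ and using $\psi^2=\mathrm{id}$ returns $(\al_n',\be_n')$, which gives the reverse inclusion. Therefore the two pairs generate the same fully invariant congruence and are equivalent. The main obstacle is entirely the combinatorial bookkeeping of the middle step: one must track the left/right roles of the vertices and verify that the opposite-end placement of the roots in $\al_n',\be_n'$ is exactly undone by the label reversal $\psi$, so that it matches the near-end placement in $\al_n,\de_n$, pendant vertex included.
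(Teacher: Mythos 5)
Your proposal is correct and takes essentially the same route as the paper's own proof: both first invoke Lemma~\ref{changingroots} to replace $(\al_n,\be_n)$ by the equivalent pair $(\al_n',\be_n')$, and then apply the very same relabeling substitution $x_1\mapsto x_1$, $x_i\mapsto x_{2n+2-i}$ ($2\le i\le 2n$), which carries $(\al_n',\be_n')$ to $(\al_n,\de_n)$. The only cosmetic difference is that you justify the two-sided equivalence by noting the substitution is an involution, whereas the paper simply observes that it is an automorphism (a bijective relabeling of variables).
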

\begin{proof} Observe that $(\al_n',\be_n')$ can be obtained from $(\al_n,\be_n)$ by changing in $\al_n$ and $\be_n$ the distinguished vertices. It follows from Lemma \ref{changingroots} that $(\al_n',\be_n')$ and $(\al_n,\be_n)$ are equivalent. However, $\de_n$ can be obtained from $\be_n'$ by the automorphism defined by a simple relabeling of the vertices, namely by the transformation $x_1\mapsto x_1$, $x_i\mapsto x_{2n+2-i}$ for all $i\in \{2,3,\dots,2n\}$ (and $x\mapsto x$ for all other $x$) and the same automorphism sends $\al_n'$ to $\al_n$. It follows that $(\al_n',\be_n')$ and $(\al_n,\de_n)$ are also equivalent.
\end{proof}
Inspection of the graph $\al_n$ shows that $\al_n$ is  maximal  in its $\rho_{\mathbf{SPS}}$-class with respect to $\le$. Moreover, $\be_n$ and $\de_n$ are the only elements in the $\rho_{\mathbf{SPS}}$-class of $\al_n$ which are covered by $\al_n$. For any pair $(\al_n,\ga)\in \rho_{\mathbf{SPS}}$ with $\al_n\ne \ga$ we have $\al_n>\al_n\wedge \ga$ whence either $(\al_n,\be_n)$ or $(\al_n,\de_n)$ must be a consequence of $(\al_n,\ga)$. Since $(\al_n,\de_n)$ and $(\al_n,\be_n)$ are equivalent, the pair $(\al_n,\be_n)$ must be a consequence of $(\al_n,\ga)$, anyway. Altogether we arrive at the next Corollary.
\begin{Cor}\label{atomicpair} For each  $n\ge 2$,
the pair $(\al_n,\be_n)$ is a consequence of every pair $(\al_n,\ga)$ for which $\al_n\ne \ga$ and $(\al_n,\ga)\in \rho_{\mathbf{SPS}}$.
\end{Cor}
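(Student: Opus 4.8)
The plan is to place $\be_n$ (up to the equivalence from the preceding Lemma) directly below $\al_n$ inside its $\rho_{\mathbf{SPS}}$-class and then transfer the pair $(\al_n,\ga)$ down to it by a single multiplication inside a subsemilattice. First I would fix $\ga\in\A(X)$ with $\ga\neq\al_n$ and $(\al_n,\ga)\in\rho_{\mathbf{SPS}}$, and let $\rho$ denote the fully invariant congruence generated by the single pair $(\al_n,\ga)$. Applying $\rho$ to $(\al_n,\ga)$ under $\al_n\wedge(-)$ and using $\al_n\wedge\al_n=\al_n$ from Lemma \ref{idempotency} gives $\al_n\mathrel{\rho}\al_n\wedge\ga$. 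Hence it suffices to produce $\eta\in\{\be_n,\de_n\}$ with $(\al_n,\eta)\in\rho$, since the preceding Lemma makes $(\al_n,\be_n)$ and $(\al_n,\de_n)$ equivalent and so $(\al_n,\eta)\in\rho$ yields $(\al_n,\be_n)\in\rho$ in either case.

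By Corollary \ref{comA} the hypothesis $(\al_n,\ga)\in\rho_{\mathbf{SPS}}$ forces $\al_n$ and $\ga$ to share the same pair $(\l,\r)=(x,y)$, so they commute and $\al_n\wedge\ga\le\al_n$. I would first check this inequality is strict: were $\al_n\wedge\ga=\al_n$ then $\al_n\le\ga$, and since $\al_n$ is maximal with respect to $\le$ in its $\rho_{\mathbf{SPS}}$-class (as observed above) this would force $\ga=\al_n$, contrary to assumption. As $\rho_{\mathbf{SPS}}$ is a congruence, $\al_n\wedge\ga$ lies in the class of $\al_n$, and in fact the whole interval $[\al_n\wedge\ga,\al_n]$ does: by Theorem \ref{leonA} any $\eta$ with $\al_n\wedge\ga\le\eta\le\al_n$ has $\al_n$ as a bi-rooted subtree of $\eta$ and $\eta$ as a bi-rooted subtree of $\al_n\wedge\ga$, whence $\l,\r$ are unchanged and $\co_2(\al_n)\subseteq\co_2(\eta)\subseteq\co_2(\al_n\wedge\ga)$, the outer terms being equal.

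Now I would take any maximal chain of the finite interval $[\al_n\wedge\ga,\al_n]$ and let $\eta$ be its second vertex from the top, so that $\al_n$ covers $\eta$ and $\al_n\wedge\ga\le\eta<\al_n$. Since $\eta$ lies in the $\rho_{\mathbf{SPS}}$-class of $\al_n$ and is covered by $\al_n$, the observation that $\be_n$ and $\de_n$ are the only such elements gives $\eta\in\{\be_n,\de_n\}$. As $\al_n$, $\eta$ and $\al_n\wedge\ga$ all have $(\l,\r)=(x,y)$, they lie in the maximal subsemilattice $\S_{x,y}(X)$, on which $\wedge$ is the associative and commutative semilattice meet and coincides with $\le$. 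Multiplying $(\al_n,\al_n\wedge\ga)\in\rho$ by $\eta$ and evaluating inside $\S_{x,y}(X)$ via $\eta\wedge\al_n=\eta$ (as $\eta\le\al_n$) and $\eta\wedge(\al_n\wedge\ga)=\al_n\wedge\ga$ (as $\al_n\wedge\ga\le\eta$) gives $\eta\mathrel{\rho}\al_n\wedge\ga$; combined with $\al_n\mathrel{\rho}\al_n\wedge\ga$ this yields $(\al_n,\eta)\in\rho$ and hence $(\al_n,\be_n)\in\rho$.

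The only genuinely non-formal step is the second paragraph: confirming that the entire interval $[\al_n\wedge\ga,\al_n]$ stays within a single $\rho_{\mathbf{SPS}}$-class, so that the cover $\eta$ is forced to be one of $\be_n,\de_n$. This rests on the monotonicity of $\co_2$ under bi-rooted subtree inclusion together with the fact that moving down along $\le$ changes neither $\l$ nor $\r$; once these are in place, the remainder is routine manipulation of the congruence $\rho$ and of the semilattice $\S_{x,y}(X)$.
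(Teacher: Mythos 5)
Your proposal is correct and follows essentially the same route as the paper: it rests on the two observations stated just before the corollary (maximality of $\al_n$ in its $\rho_{\mathbf{SPS}}$-class and the fact that $\be_n,\de_n$ are the only elements of that class covered by $\al_n$), derives $\al_n>\al_n\wedge\ga$, descends along a maximal chain to a covering pair, and invokes the equivalence of $(\al_n,\be_n)$ and $(\al_n,\de_n)$. The only difference is that you spell out details the paper leaves implicit --- notably that the whole interval $[\al_n\wedge\ga,\al_n]$ stays in one $\rho_{\mathbf{SPS}}$-class (via $\co_2$-monotonicity, though the congruence multiplication trick you use later would also do this) and the semilattice manipulations, which are exactly the chain argument sketched in Section 4.
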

 We may reformulate Corollary \ref{atomicpair} in terms of identities.
\begin{Cor}\label{atomicidentity} For each  $n\ge 2$,
the identity $u_n\approx v_n$ is a consequence of every non-trivial identity $u_n\approx v$.
\end{Cor}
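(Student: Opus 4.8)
The plan is to read the statement through the variety–congruence correspondence. Writing $\ga:=\Te(v)$, the identity $u_n\approx v$ is non-trivial exactly when $\al_n=\Te(u_n)\ne\ga$, and ``$u_n\approx v_n$ is a consequence of $u_n\approx v$'' means precisely that $(\al_n,\be_n)$ lies in the fully invariant congruence generated by $(\al_n,\ga)$; equivalently, that the subvariety $\mathbf{W}$ of $\mathbf{PS}$ determined by $u_n\approx v$ (the largest one in which $u_n\approx v$ holds) satisfies $u_n\approx v_n$. The whole point is that, in contrast with Corollary \ref{atomicpair}, non-triviality delivers only $\al_n\ne\ga$ and \emph{not} $(\al_n,\ga)\in\rho_{\mathbf{SPS}}$, so Corollary \ref{atomicpair} cannot be invoked directly. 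I would bridge this gap using the global structure of $\mathcal{L}(\mathbf{PS})$ recalled in the introduction: it is the disjoint union of the intervals $[\mathbf{T},\mathbf{NB}]$ and $[\mathbf{SPS},\mathbf{PS}]$, and $\mathbf{NB}\subseteq\mathbf{SPS}$.

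First I would split according to which of the two intervals contains $\mathbf{W}$. If $\mathbf{W}\in[\mathbf{SPS},\mathbf{PS}]$, then $\mathbf{SPS}\subseteq\mathbf{W}$, so the identity $u_n\approx v$ — which holds in $\mathbf{W}$ by definition of $\mathbf{W}$ — holds in $\mathbf{SPS}$ as well; equivalently $(\al_n,\ga)\in\rho_{\mathbf{SPS}}$. Combined with $\al_n\ne\ga$ coming from non-triviality, Corollary \ref{atomicpair} now applies verbatim and yields that $(\al_n,\be_n)$ is a consequence of $(\al_n,\ga)$, which is exactly the desired conclusion.

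The remaining case $\mathbf{W}\in[\mathbf{T},\mathbf{NB}]$ comes essentially for free: here $\mathbf{W}\subseteq\mathbf{NB}\subseteq\mathbf{SPS}$, while $(\al_n,\be_n)\in\rho_{\mathbf{SPS}}$ — the pair $(\al_n,\be_n)$ being elementary, so $\co_2(\al_n)=\co_2(\be_n)$ with equal leftmost and rightmost letters — gives $\mathbf{SPS}\models u_n\approx v_n$. Hence $\mathbf{W}\models u_n\approx v_n$ too, so $(\al_n,\be_n)$ is trivially a consequence of $(\al_n,\ga)$ in this case.

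The one genuine subtlety, and the step I would flag as the heart of the matter, is the dichotomy itself: non-triviality of $u_n\approx v$ does not force $(\al_n,\ga)$ into $\rho_{\mathbf{SPS}}$, but the absence of any variety strictly between $\mathbf{NB}$ and $\mathbf{SPS}$ (the disjointness of the two intervals together with $\mathbf{NB}\subseteq\mathbf{SPS}$) means that any $\mathbf{W}$ failing to contain $\mathbf{SPS}$ must already lie below $\mathbf{SPS}$, where $u_n\approx v_n$ holds automatically. Everything else reduces to Corollary \ref{atomicpair}, so no further computation is needed; I would only have to record the bookkeeping that ``consequence of the pair $(\al_n,\ga)$'' and ``satisfied by $\mathbf{W}$'' are the two faces of the same fact, which is immediate from $\A(X)$ being free in $\mathbf{PS}$.
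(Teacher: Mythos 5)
Your proof is correct and follows essentially the same route as the paper: the paper likewise reduces to the case $\mathbf{SPS}\models u_n\approx v$ (so that Corollary \ref{atomicpair} applies) by observing that otherwise the identity $u_n\approx v$ defines, within $\mathbf{PS}$, a variety of normal bands, which satisfies $u_n\approx v_n$ automatically. Your explicit two-interval dichotomy on $\mathcal{L}(\mathbf{PS})$ is just a more detailed writing-out of that same reduction.
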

\begin{proof} We way assume that $\mathbf{SPS}\models u_n\approx v$ for otherwise the latter identity defines, within the variety of all pseudosemilattices, a variety of normal bands  which satisfies $u_n\approx v_n$. The claim then is an immediate consequence of Corollary \ref{atomicpair}.
\end{proof}
\begin{Lemma}\label{singleidentitysuffices} If the identity $u_n\approx v_n$ is a consequence of some set $I$ of identities then $u_n\approx v_n$ is a consequence of a single identity of $I$.
\end{Lemma}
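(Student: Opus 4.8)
The plan is to reduce a hypothetical many-identity deduction of $u_n\approx v_n$ to its first effective step and then to exploit the atomic character of the pair $(\al_n,\be_n)$ recorded in Corollary \ref{atomicpair}. Suppose $u_n\approx v_n$ is a consequence of $I$ relative to $\mathbf{PS}$. By Birkhoff's completeness theorem there is a finite deduction $u_n=w_0,w_1,\dots,w_r=v_n$ in which each step $w_i\to w_{i+1}$ is obtained by substituting into a single identity of $I$ or a defining identity of $\mathbf{PS}$ and embedding the result into a context. Applying the canonical homomorphism $\Te:F_2(X)\to\A(X)$ turns this into a walk $\al_n=\Te(w_0),\dots,\Te(w_r)=\be_n$ in $\A(X)$; since $\A(X)$ is the free pseudosemilattice, the steps coming from the $\mathbf{PS}$-axioms leave the $\Te$-value unchanged, so only the $I$-steps can move it.

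Since $\al_n\neq\be_n$ in $\A(X)$, this walk is non-constant, and I would let $j$ be the least index with $\Te(w_j)\neq\Te(w_{j+1})$. Minimality of $j$ forces $\Te(w_i)=\al_n$ for all $i\le j$; in particular $\Te(w_j)=\al_n$, and the step $w_j\to w_{j+1}$ must use a single identity $s\approx t$ of $I$. Writing the step as $w_j=C[s\psi]$, $w_{j+1}=C[t\psi]$ for a context $C$ and a substitution $\psi$, the pair $(s\psi,t\psi)$ is a substitution instance of $s\approx t$ and $(C[s\psi],C[t\psi])$ arises from it by the congruence property; hence, setting $\ga:=\Te(w_{j+1})$, the pair $(\al_n,\ga)$ is a consequence of the single identity $s\approx t$, with $\al_n\neq\ga$.

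It remains to deduce $u_n\approx v_n$ from $s\approx t$ alone, and this is the step I expect to carry the real weight; I would settle it by the dichotomy of the lattice $\mathcal{L}(\mathbf{PS})$ recalled in the introduction. Let $\mathbf{W}$ be the variety of pseudosemilattices defined by $s\approx t$. As $\mathcal{L}(\mathbf{PS})$ is the disjoint union of $[\mathbf{T},\mathbf{NB}]$ and $[\mathbf{SPS},\mathbf{PS}]$, either $\mathbf{SPS}\subseteq\mathbf{W}$ or $\mathbf{W}\subseteq\mathbf{NB}$. In the first case $\mathbf{SPS}\models s\approx t$, so the instance $(s\psi,t\psi)$ lies in $\rho_{\mathbf{SPS}}$ and, after embedding into $C$, so does $(\al_n,\ga)$; since $\al_n\neq\ga$, Corollary \ref{atomicpair} gives that $(\al_n,\be_n)$ is a consequence of $(\al_n,\ga)$, hence of $s\approx t$. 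In the second case every pseudosemilattice satisfying $s\approx t$ is a normal band, and normal bands satisfy $u_n\approx v_n$ because $\mathbf{NB}\subseteq\mathbf{SPS}$; thus $u_n\approx v_n$ is again a consequence of $s\approx t$. Either way the single identity $s\approx t\in I$ suffices. The only points needing genuine care are the bookkeeping that the first effective rewrite really issues from $\al_n$ and that the normal-band alternative forces $u_n\approx v_n$, both of which are handled by the observations above.
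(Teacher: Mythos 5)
Your proof is correct and takes essentially the same route as the paper: the paper likewise isolates, in any derivation from $I$, the first identity $u\approx v\in I$ that implies a non-trivial identity $u_n\approx w$, and then invokes Corollary \ref{atomicidentity}, whose proof is exactly your dichotomy (either $\mathbf{SPS}\models u_n\approx w$, handled via Corollary \ref{atomicpair}, or the identity forces a variety of normal bands, which satisfies $u_n\approx v_n$). Your version merely makes the ``first effective step'' precise via Birkhoff deductions and the homomorphism $\Te$, and inlines the proof of Corollary \ref{atomicidentity} rather than citing it.
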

\begin{proof}  Suppose that $u_n\approx v_n$ is a consequence of $I$; then in any derivation process there occurs an identity $u\approx v\in I$ which implies a non-trivial identity $u_n\approx w$. By Corollary \ref{atomicidentity} $u_n\approx v_n$ is a consequence of $u_n\approx w$ and hence also of $u\approx v$.
\end{proof}
We are able to formulate the first major result in this section.
\begin{Thm} \begin{enumerate}
\item If the union $I\cup J$ of two sets of identities is a basis of  $\mathbf{SPS}$ then $I$ or $J$ is already a basis of
$\mathbf{SPS}$.
\item Every co-finite subset of a basis of $\mathbf{SPS}$ is also a basis of $\mathbf{SPS}$.
\item The variety $\mathbf{SPS}$ has no irredundant basis.
\end{enumerate}
\end{Thm}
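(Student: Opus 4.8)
The plan is to work throughout with identities and consequences relative to $\mathbf{PS}$, so that by Theorem \ref{basisSPSintermsofwords} a set $B$ of identities is a basis of $\mathbf{SPS}$ precisely when every identity in $B$ holds in $\mathbf{SPS}$ and every $u_n\approx v_n$ ($n\ge 2$) is a consequence of $B$. The linchpin I would establish first is the following finiteness observation: \emph{any single identity $s\approx t$ that holds in $\mathbf{SPS}$ implies $u_n\approx v_n$ for only finitely many $n$}. To see this, set $S=\{n\ge 2\mid u_n\approx v_n\text{ is a consequence of }s\approx t\}$. By Lemma \ref{nplus1impliesn}, $u_m\approx v_m$ is a consequence of $u_n\approx v_n$ whenever $2\le m\le n$, so $S$ is an initial segment of $\{2,3,\dots\}$; and if $S$ were all of $\{2,3,\dots\}$ then $\{s\approx t\}$ would be a one-element basis of $\mathbf{SPS}$, contradicting Theorem \ref{SPSisINFB}. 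Hence $S$ is finite. This is the step I expect to carry the real weight, since it is exactly where inherent non-finite basability is converted into the combinatorial fact driving the rest.

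For (1) I would argue by a pigeonhole on the single-identity witnesses supplied by Lemma \ref{singleidentitysuffices}. If $I\cup J$ is a basis, then for each $n$ the identity $u_n\approx v_n$ is a consequence of a single $e_n\in I\cup J$; since each $e_n$ lies in $I$ or in $J$, one of the index sets $\{n\mid e_n\in I\}$ and $\{n\mid e_n\in J\}$ is infinite, say the former. Then $I$ is a basis: all its identities hold in $\mathbf{SPS}$ because $I\subseteq I\cup J$, and for any $m$ I would choose $n\ge m$ with $e_n\in I$ and use Lemma \ref{nplus1impliesn} to get $u_m\approx v_m$ as a consequence of $u_n\approx v_n$, hence of $e_n$, which lies in $I$. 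Note that (1) needs only Lemmas \ref{singleidentitysuffices} and \ref{nplus1impliesn}, not the finiteness observation.

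For (2), let $B'=B\setminus F$ be a co-finite subset of a basis $B$, with $F\subseteq B$ finite. Its identities all hold in $\mathbf{SPS}$, so only the second basis condition is at issue. By the finiteness observation each identity of $F$ (which holds in $\mathbf{SPS}$) implies only finitely many $u_n\approx v_n$, so I would fix $M$ beyond which no identity of $F$ implies any $u_n\approx v_n$. For $n>M$, Lemma \ref{singleidentitysuffices} gives a single $e\in B$ with $u_n\approx v_n$ a consequence of $e$; this $e$ cannot lie in $F$, whence $e\in B'$ and $u_n\approx v_n$ is a consequence of $B'$. For $2\le n\le M$ I would derive $u_n\approx v_n$ from $u_{M+1}\approx v_{M+1}$ via Lemma \ref{nplus1impliesn}, and the latter is already a consequence of $B'$. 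Thus every $u_n\approx v_n$ is a consequence of $B'$, so $B'$ is a basis. Finally, (3) is immediate: any basis $B$ is non-empty since $\mathbf{SPS}\subsetneq\mathbf{PS}$, and deleting a single member leaves a co-finite subset which is still a basis by (2), so no basis can be irredundant. The only care needed is to keep the two reduction lemmas pointing in the right direction and to record that every identity occurring in a basis automatically holds in $\mathbf{SPS}$.
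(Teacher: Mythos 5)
Your proposal is correct, and for part (1) it is essentially the paper's own argument in direct rather than contrapositive form: both rest on exactly the two reduction facts (Lemma \ref{singleidentitysuffices} to replace a set of premises by a single identity, and Lemma \ref{nplus1impliesn} to pass from a larger index down to a smaller one), the paper taking $l=\max(k,n)$ of two counterexample indices where you run an infinite pigeonhole; a small bonus of your direct version is that the hypothesis ``all identities of $I$ hold in $\mathbf{SPS}$'' is automatic from $I\subseteq I\cup J$, a point the contrapositive formulation leaves tacit. The genuine divergence is in part (2): the paper gets it in one line by writing $B=B'\cup F$, invoking (1), and noting that the finite set $F$ cannot be a basis since $\mathbf{SPS}$ is not finitely based (Theorem \ref{SPSisINFB}); you instead re-prove (2) from scratch via your ``finiteness observation'' that a single identity valid in $\mathbf{SPS}$ implies only an initial segment of the family $\{u_n\approx v_n\}$, so that beyond some $M$ the single-identity witness supplied by Lemma \ref{singleidentitysuffices} must already lie in $B'$. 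Your observation is correct (its proof is exactly the initial-segment argument plus non-finite-basedness) and it isolates the combinatorial mechanism that also powers the paper's shortcut, so it is a perfectly sound, if slightly longer, route; what it buys is a reusable quantitative statement, while the paper's derivation of (2) from (1) is more economical. Part (3) is handled identically in both, and your remark that a basis is non-empty because $\mathbf{SPS}\subsetneq\mathbf{PS}$ is a legitimate (implicit in the paper) finishing touch.
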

\begin{proof} (1) Let $I$ and $J$ be two sets of
identities such that neither is a basis of $\mathbf{SPS}$; then there are $k,n$ such that $u_k\approx v_k$  is not a consequence of $I$ and $u_n\approx v_n$ is not a consequence of $J$. Then by Lemmas \ref{nplus1impliesn}
and \ref{singleidentitysuffices}, $u_l\approx v_l$ is not a consequence of $I\cup J$ for $l=\max(k,n)$.
(2) and (3)  are  immediate consequences of (1) since $\mathbf{SPS}$ does not have a finite basis by Theorem \ref{SPSisINFB}.\end{proof}

An element $a$ of a lattice $\mathcal{L}$ is \emph{$\wedge$-irreducible} if for any $b,c\in \mathcal{L}$, the equality  $a=b\wedge c$ implies $a=b$ or $a=c$; $a$ is \emph{$\wedge$-prime} if  $a\ge b\wedge c$ implies $a\ge b$ or $a\ge c$. It is well known that every $\wedge$-prime element is also $\wedge$-irreducible.
\begin{Prop}\label{covers}
The variety $\mathbf{SPS}$ is $\cap$-pri\-me and $\cap$-irreducible in the lattice $\mathcal{L}(\mathbf{PS})$. Further, $\mathbf{SPS}$ has no covers in $\mathcal{L}(\mathbf{PS})$.
\end{Prop}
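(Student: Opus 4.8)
The plan is to reduce all three assertions to a single core fact, namely that $\mathbf{SPS}$ is $\cap$-irreducible \emph{inside} the interval $[\mathbf{SPS},\mathbf{PS}]$: there are no $\mathbf{V},\mathbf{W}\supsetneq\mathbf{SPS}$ with $\mathbf{V}\cap\mathbf{W}=\mathbf{SPS}$. Once this is available, $\cap$-primeness follows by exploiting the splitting of $\mathcal{L}(\mathbf{PS})$ into the two intervals $[\mathbf{T},\mathbf{NB}]$ and $[\mathbf{SPS},\mathbf{PS}]$ recorded in the introduction, $\cap$-irreducibility follows from primeness, and the absence of covers follows by a short extra argument producing an intermediate variety. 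The delicate step, and the one I expect to be the main obstacle, is the core fact, because it requires upgrading membership of each defining pair in a \emph{join} of congruences to membership in one of the two factors; this is exactly where the ``atomicity'' of the pairs $(\al_n,\be_n)$ must be used.

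I would prove the core fact as follows. Let $\mathbf{V},\mathbf{W}$ be subvarieties of $\mathbf{PS}$ with $\mathbf{V}\cap\mathbf{W}=\mathbf{SPS}$, and fix bases $I_{\mathbf{V}},I_{\mathbf{W}}$ (relative to $\mathbf{PS}$) of $\mathbf{V}$ and $\mathbf{W}$; then $I_{\mathbf{V}}\cup I_{\mathbf{W}}$ is a basis of $\mathbf{SPS}$. Since $\mathbf{SPS}\models u_n\approx v_n$ for all $n$, each $u_n\approx v_n$ is a consequence of $I_{\mathbf{V}}\cup I_{\mathbf{W}}$, and by Lemma \ref{singleidentitysuffices} it is already a consequence of a single identity of $I_{\mathbf{V}}\cup I_{\mathbf{W}}$; hence $\mathbf{V}\models u_n\approx v_n$ or $\mathbf{W}\models u_n\approx v_n$. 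Writing $S_{\mathbf{V}}=\{n\ge 2\mid \mathbf{V}\models u_n\approx v_n\}$ and $S_{\mathbf{W}}$ dually, we get $S_{\mathbf{V}}\cup S_{\mathbf{W}}=\{2,3,\dots\}$. By Lemma \ref{nplus1impliesn} the identity $u_n\approx v_n$ is a consequence of $u_{n+1}\approx v_{n+1}$, so $S_{\mathbf{V}}$ and $S_{\mathbf{W}}$ are down-closed, i.e. initial segments of $\{2,3,\dots\}$. As the union of two proper (hence finite) initial segments is again a proper initial segment, one of $S_{\mathbf{V}},S_{\mathbf{W}}$ equals all of $\{2,3,\dots\}$; say $S_{\mathbf{V}}$. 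Then $\mathbf{V}$ satisfies every identity of the basis $\{u_n\approx v_n\}$ of $\mathbf{SPS}$ (Theorem \ref{basisSPSintermsofwords}), so $\mathbf{V}=\mathbf{SPS}$. This proves $\cap$-irreducibility within $[\mathbf{SPS},\mathbf{PS}]$.

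For $\cap$-primeness, suppose $\mathbf{V}\cap\mathbf{W}\subseteq\mathbf{SPS}$ with $\mathbf{V}\not\subseteq\mathbf{SPS}$ and $\mathbf{W}\not\subseteq\mathbf{SPS}$. Any member of the lower interval $[\mathbf{T},\mathbf{NB}]$ is contained in $\mathbf{NB}\subseteq\mathbf{SPS}$, so neither $\mathbf{V}$ nor $\mathbf{W}$ can lie there; by the disjointness of the two intervals both lie in $[\mathbf{SPS},\mathbf{PS}]$ and differ from $\mathbf{SPS}$, i.e. $\mathbf{V},\mathbf{W}\supsetneq\mathbf{SPS}$. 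Then $\mathbf{V}\cap\mathbf{W}\supseteq\mathbf{SPS}$, forcing $\mathbf{V}\cap\mathbf{W}=\mathbf{SPS}$, and the core fact yields $\mathbf{V}=\mathbf{SPS}$ or $\mathbf{W}=\mathbf{SPS}$, a contradiction. Hence $\mathbf{SPS}$ is $\cap$-prime, and therefore also $\cap$-irreducible.

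Finally, for the absence of covers, suppose that $\mathbf{V}$ covers $\mathbf{SPS}$, so $\mathbf{V}\supsetneq\mathbf{SPS}$. Since $\{u_n\approx v_n\}$ is a basis of $\mathbf{SPS}$, some $u_N\approx v_N$ fails in $\mathbf{V}$. Let $\mathbf{V}_N$ be the subvariety of $\mathbf{PS}$ defined by the single identity $u_N\approx v_N$; by Theorem \ref{SPSisINFB} the variety $\mathbf{SPS}$ is not finitely based, so $\mathbf{V}_N\ne\mathbf{SPS}$ and hence $\mathbf{V}_N\supsetneq\mathbf{SPS}$. Put $\mathbf{W}=\mathbf{V}\cap\mathbf{V}_N$. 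On the one hand $\cap$-irreducibility gives $\mathbf{W}\ne\mathbf{SPS}$, so $\mathbf{W}\supsetneq\mathbf{SPS}$; on the other hand $\mathbf{W}\models u_N\approx v_N$ while $\mathbf{V}\not\models u_N\approx v_N$, so $\mathbf{W}\subsetneq\mathbf{V}$. Thus $\mathbf{SPS}\subsetneq\mathbf{W}\subsetneq\mathbf{V}$, contradicting that $\mathbf{V}$ covers $\mathbf{SPS}$. Therefore $\mathbf{SPS}$ has no covers in $\mathcal{L}(\mathbf{PS})$.
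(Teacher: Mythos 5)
Your proposal is correct and takes essentially the same route as the paper: the paper first proves, as a standalone theorem, that if a union $I\cup J$ of identity sets is a basis of $\mathbf{SPS}$ then $I$ or $J$ already is (using exactly Lemmas \ref{nplus1impliesn} and \ref{singleidentitysuffices}, as you do), then derives $\cap$-primeness from the interval decomposition of $\mathcal{L}(\mathbf{PS})$ and rules out covers via the variety defined by a single identity $u_n\approx v_n$, just as in your argument. Your ``initial segments'' packaging of the union-of-bases step and your rearranged contradiction in the covers part (exhibiting $\mathbf{SPS}\subsetneq\mathbf{W}\subsetneq\mathbf{V}$ rather than contradicting irreducibility directly) are only cosmetic variations.
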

\begin{proof}
 As already mentioned, the lattice $\mathcal{L}(\mathbf{PS})$ is the disjoint union of the intervals $[\mathbf{T},\mathbf{NB}]$ and $[\mathbf{SPS},\mathbf{PS}]$ with $\mathbf{NB}\subsetneq \mathbf{SPS}$, where $\mathbf{T}$ is the variety of all trivial binary algebras. Thus, for $\mathbf{U},\,\mathbf{V}\in\mathcal{L}(\mathbf{PS})$,
\[\mathbf{U}\cap \mathbf{V}\subsetneq \mathbf{SPS}\qquad\mbox{ implies }\qquad \mathbf{U}\subseteq \mathbf{ NB}\quad \mbox{ or }\quad\mathbf{V}\subseteq \mathbf{ NB}.\;\]
Assume next that $\mathbf{SPS}=\mathbf{U}\cap \mathbf{V}$ and let $I$ and $J$ be identity bases of $\mathbf{U}$ and $\mathbf{V}$, respectively. Then $I\cup J$  is an identity basis  for $\mathbf{SPS}$, and so $I$ or $J$ is also a basis  for $\mathbf{SPS}$ by the previous result. Thus $\mathbf{U}=\mathbf{SPS}$ or $\mathbf{V}=\mathbf{SPS}$, and we have shown that $\mathbf{SPS}$ is $\cap$-prime and $\cap$-irreducible in $\mathcal{L}(\mathbf{PS})$.

Let now $\mathbf{U}$ be a cover of $\mathbf{SPS}$. Then there exists an identity $u_n\approx v_n$  which does not hold in $\mathbf{U}$. Let $\mathbf{V}$ be the variety of \pseudo s defined by  $u_n\approx v_n$. Then $\mathbf{U}\not\subseteq \mathbf{V}\,$ and $\,\mathbf{SPS}=\mathbf{U}\cap \mathbf{V}$, which contradicts the fact that $\mathbf{SPS}$ is $\cap$-irreducible since $\mathbf{U}\ne\mathbf{SPS}\ne\mathbf{V}$.
\end{proof}
 Finally, let us briefly mention some applications to the lattice of e-varieties of locally inverse semigroups. Let $E\mathbf{SR}$ be the e-variety of all locally inverse semigroups $S$ whose idempotent generated subsemigroup $\left<E(S)\right>$ is strict regular. Then  $\mathbf{SPS}\varphi^{-1}=[\mathbf{CSR},E\mathbf{SR}]$  for the complete \hyphenation{homo-morphism} homomorphism $\varphi$ of (\ref{sgtops}). As a consequence of Proposition \ref{covers} we get:
 \begin{Cor} The e-variety $E\mathbf{SR}$ is $\cap$-prime  and has no covers in the lattice ${\mathcal{L}}_e(\mathbf{ LI})$ of all e-varieties of locally inverse semigroups.
 \end{Cor}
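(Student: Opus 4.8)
The plan is to exploit that the map $\varphi$ of (\ref{sgtops}) is a surjective \emph{complete} lattice homomorphism, which forces it to possess a meet-preserving upper adjoint, and then to transport the properties of $\mathbf{SPS}$ established in Proposition \ref{covers} back across $\varphi$. First I would introduce the map
$$\psi:\mathcal{L}(\mathbf{PS})\to\mathcal{L}_e(\mathbf{LI}),\qquad \psi(\mathbf{W})=\bigvee\mathbf{W}\varphi^{-1},$$
which sends a variety to the top of its (interval-shaped) preimage class. Because $\varphi$ preserves arbitrary joins, $\psi$ is the upper adjoint of $\varphi$: for all $\mathbf{U}\in\mathcal{L}_e(\mathbf{LI})$ and $\mathbf{W}\in\mathcal{L}(\mathbf{PS})$ one has $\mathbf{U}\varphi\subseteq\mathbf{W}$ if and only if $\mathbf{U}\subseteq\psi(\mathbf{W})$. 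I would verify this via the standard observation that $\mathbf{U}\varphi\subseteq\mathbf{W}$ implies $(\mathbf{U}\vee\psi(\mathbf{W}))\varphi=\mathbf{U}\varphi\vee\mathbf{W}=\mathbf{W}$, so $\mathbf{U}\vee\psi(\mathbf{W})\in\mathbf{W}\varphi^{-1}$ and hence $\mathbf{U}\subseteq\psi(\mathbf{W})$. From the adjunction I record that $\psi$ is order preserving and meet preserving, that $\psi(\mathbf{W})\varphi=\mathbf{W}$ by surjectivity of $\varphi$, and that $\psi(\mathbf{SPS})=E\mathbf{SR}$ since $\mathbf{SPS}\varphi^{-1}=[\mathbf{CSR},E\mathbf{SR}]$.

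For $\cap$-primeness I would argue as follows. Suppose $E\mathbf{SR}\supseteq\mathbf{U}\cap\mathbf{V}$ with $\mathbf{U},\mathbf{V}\in\mathcal{L}_e(\mathbf{LI})$. Applying the homomorphism $\varphi$ gives $\mathbf{SPS}=E\mathbf{SR}\varphi\supseteq(\mathbf{U}\cap\mathbf{V})\varphi=\mathbf{U}\varphi\cap\mathbf{V}\varphi$. By Proposition \ref{covers}, $\mathbf{SPS}$ is $\cap$-prime in $\mathcal{L}(\mathbf{PS})$, so, say, $\mathbf{SPS}\supseteq\mathbf{U}\varphi$; the adjunction then yields $\mathbf{U}\subseteq\psi(\mathbf{SPS})=E\mathbf{SR}$, as required.

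For the absence of covers I would assume that $\mathbf{W}$ covers $E\mathbf{SR}$ and derive a contradiction. Then $\mathbf{SPS}=E\mathbf{SR}\varphi\subseteq\mathbf{W}\varphi$, and equality is impossible, for it would place $\mathbf{W}$ in $\mathbf{SPS}\varphi^{-1}=[\mathbf{CSR},E\mathbf{SR}]$, contradicting $E\mathbf{SR}\subsetneq\mathbf{W}$. Hence $\mathbf{SPS}\subsetneq\mathbf{W}\varphi$, and since $\mathbf{SPS}$ has no cover in $\mathcal{L}(\mathbf{PS})$ by Proposition \ref{covers}, I may choose $\mathbf{P}$ with $\mathbf{SPS}\subsetneq\mathbf{P}\subsetneq\mathbf{W}\varphi$. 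Setting $\mathbf{Q}=\mathbf{W}\cap\psi(\mathbf{P})$, I compute $\mathbf{Q}\varphi=\mathbf{W}\varphi\cap\mathbf{P}=\mathbf{P}$; from $\mathbf{P}\subsetneq\mathbf{W}\varphi$ I get $\mathbf{Q}\subsetneq\mathbf{W}$, while from $\mathbf{P}\supsetneq\mathbf{SPS}=E\mathbf{SR}\varphi$, together with $E\mathbf{SR}\subseteq\mathbf{W}$ and $E\mathbf{SR}=\psi(\mathbf{SPS})\subseteq\psi(\mathbf{P})$, I get $E\mathbf{SR}\subsetneq\mathbf{Q}$. Thus $E\mathbf{SR}\subsetneq\mathbf{Q}\subsetneq\mathbf{W}$, contradicting that $\mathbf{W}$ covers $E\mathbf{SR}$.

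The one genuine obstacle is the first paragraph: one must be careful that ``complete homomorphism'' really delivers the meet-preserving upper adjoint $\psi$ with the identities $\psi(\mathbf{W})\varphi=\mathbf{W}$ and $\psi(\mathbf{SPS})=E\mathbf{SR}$. Once this adjunction is secured, both assertions become formal: each follows by pushing the hypothesis forward along the homomorphism $\varphi$, invoking the corresponding property of $\mathbf{SPS}$ from Proposition \ref{covers}, and pulling the conclusion back along $\psi$, with the cover statement additionally relying on the explicit witness $\mathbf{Q}=\mathbf{W}\cap\psi(\mathbf{P})$ whose $\varphi$-image computes to the intermediate variety $\mathbf{P}$.
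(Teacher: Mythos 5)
Your proof is correct and takes essentially the same approach the paper intends: the paper derives the corollary directly from Proposition \ref{covers} using the surjective complete homomorphism $\varphi$ of (\ref{sgtops}) and the fact that $\mathbf{SPS}\varphi^{-1}=[\mathbf{CSR},E\mathbf{SR}]$, which is precisely the transport argument you carry out via the upper adjoint $\psi$. Your write-up merely supplies the lattice-theoretic details (the adjunction, the identities $\psi(\mathbf{W})\varphi=\mathbf{W}$ and $\psi(\mathbf{SPS})=E\mathbf{SR}$, and the witness $\mathbf{Q}=\mathbf{W}\cap\psi(\mathbf{P})$) that the paper leaves implicit.
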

 A concept of identities --- \emph{bi-identities} --- has been introduced for e-varieties of locally inverse semigroups by the first author \cite{bifreeLI,biequationaltheory},  subject to which the notion of
an identity basis makes sense (the reader is referred to the cited papers for more details). We note that the following result is not covered by Theorem 4.2 in \cite{adv2} which presents a sufficient condition for an e-variety of locally inverse semigroups to be not finitely based.
 \begin{Cor} The e-variety $E\mathbf{SR}$ has no finite basis for its bi-identities.
 \end{Cor}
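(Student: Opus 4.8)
The plan is to deduce the statement from the fact, established in Theorem \ref{SPSisINFB}, that $\mathbf{SPS}=E\mathbf{SR}\varphi$ is not finitely based (indeed inherently so), by transporting a hypothetical finite basis of bi-identities for $E\mathbf{SR}$ across the homomorphism $\varphi$ of (\ref{sgtops}) to a finite basis of identities for $\mathbf{SPS}$. So suppose, for a contradiction, that $E\mathbf{SR}$ is defined relative to $\mathbf{LI}$ by finitely many bi-identities $p_1\approx q_1,\dots,p_k\approx q_k$, and write $\mathbf{V}_i=\{S\in\mathbf{LI}\mid S\models p_i\approx q_i\}$, so that $E\mathbf{SR}=\bigcap_{i=1}^k\mathbf{V}_i$ with each $\mathbf{V}_i\in\mathcal{L}_e(\mathbf{LI})$ singly based over $\mathbf{LI}$. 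Since $\varphi$ is a \emph{complete} lattice homomorphism it preserves finite meets, so $\mathbf{SPS}=E\mathbf{SR}\varphi=\bigcap_{i=1}^k\mathbf{V}_i\varphi$. Because a finite intersection of finitely based varieties is finitely based, it suffices to prove that each $\mathbf{V}_i\varphi$ is a finitely based variety of pseudosemilattices; the non-finite-basedness of $\mathbf{SPS}$ then yields the contradiction.

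The bridge between the two equational theories is supplied by the bi-equational theory of locally inverse semigroups of \cite{bifreeLI, biequationaltheory}: the free pseudosemilattice $\A(X)$ is (a model of) the pseudosemilattice of idempotents of the bifree locally inverse semigroup on $X$, and for any e-variety $\mathbf{V}$ the identities valid in $\mathbf{V}\varphi$ are exactly the \emph{idempotent} bi-identities valid in $\mathbf{V}$, namely those whose two sides are terms built from the variables using only the sandwich operation $\wedge$. Thus the identities of $\mathbf{V}_i\varphi$ are precisely the restriction to idempotent terms of the bi-equational theory generated over $\mathbf{LI}$ by the single bi-identity $p_i\approx q_i$. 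I would establish that this restricted theory is finitely based by substituting idempotent terms (pseudosemilattice words) for the finitely many variables occurring in $p_i\approx q_i$ and reading off, inside the bifree object, the induced relations between the idempotent projections of $p_i$ and $q_i$; since only finitely many variables occur and the combinatorial model of $\A(X)$ describes these projections explicitly, only finitely many pseudosemilattice identities should arise, and these define $\mathbf{V}_i\varphi$ within $\mathbf{PS}$.

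The main obstacle is exactly this descent: one must show that the idempotent part of the bi-equational theory generated by a \emph{single} bi-identity is finitely based as a theory of pseudosemilattices. The difficulty is that a derivation of an idempotent bi-identity from $p_i\approx q_i$ over $\mathbf{LI}$ may pass through non-idempotent terms, so a priori the consequence relation among idempotent bi-identities is richer than ordinary pseudosemilattice deduction; controlling it requires a conservativity-type argument within the framework of \cite{bifreeLI, biequationaltheory} that bounds the complexity of the idempotents produced in such derivations. Once this is in place each $\mathbf{V}_i\varphi$ is finitely based, hence so is $\mathbf{SPS}=\bigcap_{i=1}^k\mathbf{V}_i\varphi$, contradicting Theorem \ref{SPSisINFB}; therefore $E\mathbf{SR}$ has no finite basis of bi-identities. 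Equivalently, the argument shows that $\varphi$ preserves finite basedness, so that the failure of finite basedness of $\mathbf{SPS}$ is reflected back to $E\mathbf{SR}$.
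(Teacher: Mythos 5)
There is a genuine gap, and it is exactly the step you flag as the ``main obstacle'' but never close: the claim that $\mathbf{V}_i\varphi$ is a finitely based variety of pseudosemilattices whenever $\mathbf{V}_i$ is an e-variety defined by a single bi-identity $p_i\approx q_i$. Your identification of the identities of $\mathbf{V}_i\varphi$ with the ``idempotent'' bi-identities of $\mathbf{V}_i$ (those of the form $u(x_1x_1',\dots)\approx v(x_1x_1',\dots)$ with $u,v$ terms in $\wedge$) is fine, but the argument you sketch for finite basedness of this fragment does not work: substituting idempotent terms for the variables of $p_i\approx q_i$ produces \emph{infinitely} many substitution instances, so nothing limits the resulting set of pseudosemilattice identities to a finite one, and a derivation of an idempotent consequence may pass through non-idempotent terms, as you yourself note. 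The needed ``conservativity-type argument'' is not supplied by you, by the paper, or by the cited framework \cite{bifreeLI,biequationaltheory}; in general there is no reason why the idempotent fragment of a finitely axiomatized bi-equational theory should be finitely axiomatizable (non-preservation of finite axiomatizability under passage to fragments/reducts is a standard phenomenon in equational logic). Note also that your unproven claim --- that $\varphi$ maps finitely based e-varieties to finitely based varieties --- is substantially \emph{stronger} than the corollary you are trying to prove, so assuming it amounts to assuming more than the conclusion.

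The paper transfers information in the opposite, and actually available, direction. By Theorem \ref{basisSPSintermsofwords}, the identity basis $\{u_n\approx v_n\mid n\ge 2\}$ of $\mathbf{SPS}$ translates into an explicit infinite bi-identity basis $\{u_n(x_1x_1',\dots,x_{2n}x_{2n}')\approx v_n(x_1x_1',\dots,x_{2n}x_{2n}')\mid n\ge 2\}$ of $E\mathbf{SR}=\mathbf{SPS}\varphi^{-1}$ (this translation is legitimate precisely because one only moves pseudosemilattice identities up to the full preimage, never arbitrary bi-identities down to the image). Theorem \ref{SPSisINFB} shows that no finite subset of this set is a basis, and the Compactness Theorem of Equational Logic (which holds for bi-identities) then excludes \emph{any} finite basis: a hypothetical finite basis would be a consequence of finitely many of the listed bi-identities, which would then themselves form a finite basis. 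If you want to keep the contradiction-style architecture of your argument, this is how to repair it: from a hypothetical finite basis, compactness and Lemma \ref{nplus1impliesn} reduce to $E\mathbf{SR}$ being defined by a single bi-identity $u_N(xx')\approx v_N(xx')$; then surjectivity and completeness of $\varphi$ in \eqref{sgtops} force $\mathbf{SPS}$ to coincide with the pseudosemilattice variety defined by $u_N\approx v_N$, contradicting Theorem \ref{SPSisINFB}. That route never needs your descent claim.
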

 \begin{proof} The set of bi-identities
 $$\{u_n(x_1x_1',\dots,x_{2n}x_{2n}')\approx v_n(x_1x_1',\dots,x_{2n}x_{2n}')\mid n\ge 2\}$$
 is an infinite basis for the bi-identities of $E\mathbf{SR}$ by Theorem \ref{basisSPSintermsofwords}. Theorem \ref{SPSisINFB} shows that no finite subset is a basis. The claim then follows from the Compactness Theorem of Equational Logic which also holds in the context of bi-identities.
 \end{proof}

\noindent\textbf{Acknowledgments}: The second author was partially supported by the European Regional Development Fund through the programme COMPETE and by the Portuguese Government through the FCT -- Funda\c{c}\~ao para a Ci\^encia e a Tecnologia under the project PEst-C/MAT/UI0144/2011.

\end{document}